\newtheorem{theorem}{Theorem}[section]
\newtheorem{lemma}[theorem]{Lemma}
\newtheorem{proposition}[theorem]{Proposition}
\newtheorem{corollary}[theorem]{Corollary}
{ \theoremstyle{definition}
\newtheorem{definition}[theorem]{Definition}}
{ \theoremstyle{definition}
}
{ \theoremstyle{remark}
\newtheorem{remark}[theorem]{Remark}}
\DeclareMathOperator{\arccot}{arccot}
\title{Interacting particle systems at the edge of multilevel Jack processes}
\date{\today}
\author{Evgeni Dimitrov and Panagiotis Lolas}
\begin{document}

\maketitle 

\begin{abstract}
We consider a multilevel continuous time Markov chain $X(s;N) = (X_i^j(s;N): 1 \leq i \leq j \leq N)$, which is defined by means of Jack symmetric functions and forms a certain discretization of the multilevel Dyson Brownian motion. The process $X(s;N)$ describes the evolution of a discrete interlacing particle system with push-block interactions between the particles, which preserve the interlacing property. 
We study the joint asymptotic separation of the particles at the right edge of the ensemble as the number of levels and time tend to infinity and show that the limit is described by a certain zero range process with local interactions.
\end{abstract}

\tableofcontents
\section{Introduction and main results}\label{Section1}
The main results of this paper are contained in Section \ref{Section1.2}. The section below gives background for the main object we study, which is a certain interacting particle system with push-block dynamics.

\subsection{Preface}\label{Section1.1}
During the last two decades there has been significant progress in understanding the long time nonequilibrium behavior of interacting particle systems and random growth models that belong to the so-called KPZ universality class. An important role for this success, has been played by {\em integrable} (or exactly solvable) models. Integrability in this case refers to the fact that these systems typically come with some enhanced algebraic structure, which makes them more amenable to detailed analysis and hence provides the most complete access to various phenomena such as phase transition, intermittency, scaling exponents, and fluctuation statistics.

One particular algebraic framework, which has enjoyed substantial interest and success in analyzing various probabilistic systems in the last several years, is the theory of Macdonald processes \cite{BorCor}. Macdonald processes are defined in terms of a remarkable class of symmetric polynomials, called Macdonald symmetric polynomials, which are parametrized by two numbers $(q,t)$ - see \cite{Mac}. By leveraging some of their algebraic properties, Macdonald processes have proved useful in solving a number of problems in probability theory, including computing exact Fredholm determinant formulas and associated asymptotics for one-point marginal distributions of the O'Connel-Yor semi-discrete directed polymer \cite{BorCor, BCF}; log-gamma discrete directed polymer \cite{BorCor, BCR};  KPZ/stochastic heat equation \cite{BCF}; $q$-TASEP \cite{Bar15, BorCor, BorCor2, BCS} and $q$-PushASEP \cite{BP, CP}.

There is a rich class of integrable models for interacting particle systems that comes from the multivariate continuous time Markov chains, which preserve Macdonald processes. These dynamics are called {\em push-block} in \cite{BP}, but we will refer to them as {\em multilevel Macdonald processes} or {\em MMPs}. MMPs describe certain interacting particle systems with global interactions, whose state space is given by interlacing particle configurations with integer coordinates. For a definition of MMPs we refer the reader to Section 2.3.3 of \cite{BorCor}; however, we remark that the construction there is parallel to those of \cite{Bor11,BorFer} and is based on a much earlier idea of \cite{DF} (see also \cite{BP} for a more general discussion). 

Two particular cases of the MMPs, which have been studied extensively, are $t = 0$ (this degenerates Macdonald to $q$-Whittaker symmetric functions) and $q = t$ (this degenerates Macdonald to Schur symmetric functions). One reason these two cases have received much attention is because of their connection to the KPZ equation and universality class (see \cite{BorCor, BCF, BorFer} and the references therein). Another, more technical, reason is that these two cases come with a certain algebraic structure, which can be exploited to obtain concise formulas for a large class of observables. Specifically, in the Schur case ($q=t$) the dynamics is described by a determinantal point process, whose correlation kernel has a relatively simple form along {\em space-like} paths \cite{BorFer}. In the $q$-Whittaker case ($t= 0$) and also for generic $(q,t)$ parameters the algebraic tools that provide access to detailed asymptotic analysis are the Macdonald difference operators \cite{BorCor}.\\

In this paper we study a different case of the MMP, when $t = q^\theta$ and $q \rightarrow 1$, where $\theta > 0$. This parameter specialization degenerates the Macdonald to the Jack symmetric functions and we call the resulting dynamics {\em multilevel Jack processes} or {\em MJPs}. MJPs form a one-parameter generalization of the multilevel Schur dynamics and they degenerate to the latter when $\theta = 1$. One reason that MJPs have received relatively little attention is because the existing methods for the Schur and $q$-Whittaker case are not directly applicable to this setting. In particular, for $\theta \neq 1$ MJPs lose the determinantal point process structure of the Schur dynamics, and the $q$-moments method that comes from the Macdonald difference operators fails to produce useful formulas for observables.

One motivation for studying MJPs comes from their connections with random matrix theory. In \cite{GS1} it was shown that under a diffuse scaling limit the MJPs converge to a  simple diffusion process that depends on a parameter $\beta = 2\theta$ and is called {\em multilevel Dyson Brownian motion} ({\em MDBM}). This process generalizes the interlacing reflected Brownian motions process of Warren \cite{War}, which is recovered when $\beta = 2$. In addition, when projected on the top row the MDBM agrees with the Dyson Brownian motion and its fixed time distribution is given by the Hermite $\beta$ corners process. Another important feature of MJPs is that their fixed time distribution of the top level is described by the {\em discrete $\beta$-ensemble} of \cite{BGG}. The discrete $\beta$-ensembles are probability distributions on particle ensembles, which are discretizations for the general-$\beta$ log-gases of random matrix theory. The link between MJPs and the discrete $\beta$-ensemble is described in Section \ref{Section5} below and it plays an important role in our arguments. 

In view of its connection to the MDBM and the discrete $\beta$-ensembles, but also as an interesting integrable model in its own right, it is desirable to develop tools and analyze the MJP and this is the main purpose of this paper. Our main results (Theorems \ref{Theorem1} and \ref{Theorem2} below) describe the asymptotic distribution of the separation of the particles at the right edge of a particular MJP as the number of levels and time go to infinity with the same rate. In this limit we show that the dynamics of the gaps between particles converge to an explicit stationary continuous time Markov chain. Interestingly, in the limit the interactions of the particles on the right edge with the rest of the diagram disappear. I.e. the particles on the right edge decouple from the others and their limiting evolution is based on {\em local} interactions among themselves. We remark that the latter phenomenon was observed in the case of MDBM in \cite{GS2}, where analogous (continuous) versions of our results were obtained. 

Our methods are largely influenced by \cite{GS2}; however, we emphasize that we make substantial modifications to their arguments. As basic ingredients for our proofs we use results available for the discrete $\beta$-ensemble such as the law of large numbers for the empirical measures and the large deviation estimates for the right-most particle \cite{BGG}. These substitute asymptotic results from random matrix theory that were utilized in \cite{GS2}. In addition, due to the discrete nature of our process, we achieve various significant simplifications of our proofs, especially for the dynamical setting. So for example, we completely avoid using strong results from SDE theory such as rigidity estimates for Brownian motion, and instead rely on more direct probabilistic arguments.

We now turn to formulating our problem and presenting our results in detail.

\subsection{The process $X(s;N)$}\label{Section1.2}
We start by describing the main object that we study, which is a certain $N(N+1)/2$-dimensional process that we denote by $X(s;N) = \left(X^j_i(s;N) : \hspace{1mm} 1 \leq i \leq j \leq N\right),s \geq 0$. The state space of this process is the space of Gelfand-Tsetlin patterns $\mathbb{GT}^N$, defined by
\begin{equation}\label{S1GT}
\mathbb{GT}^N = \{ y = (y_i^j )_{1 \leq i \leq j \leq N} \in \mathbb{Z}^{N(N+1)/2}: \hspace{1mm} y_i^{j+1} \geq y_i^j \geq y_{i+1}^{j+1}, \hspace{1mm} 1 \leq i \leq j \leq N-1\}
\end{equation}
At time $0$ we assume that the process starts from $X(0;N) = 0^{N(N+1)/2}$. In what follows we describe the evolution of the particles, and to make illustrations clearer we will work with the deterministically transformed process $x_i^j = X_i^j - i + 1$. We interpret the coordinates $x^j_i$ as positions of particles, and we also use $x_i^j$ to label them.  The initial configuration for the transformed process is given in Figure \ref{S1_1} and the dynamics is as follows.

Each of the coordinates (particles) $x^j_i$ has its own exponential clock with rate given by
\begin{equation}\label{S1Rates}
\begin{split}
q(x_i^j) =  \hspace{1mm} &\theta\cdot \prod_{r = 1}^{i-1} \frac{x_r^j - x_i^j + (\theta - 1) (i-r+1)}{x_r^j - x_i^j + (\theta - 1)(i-r+1) + \theta} \cdot  \frac{x_r^{j-1} - x_i^j + (\theta - 1) (i-r) -\theta }{x_r^{j-1} - x_i^j + (\theta - 1)(i-r)  - 1} \\
&\times \prod_{n = i}^{j-1}\frac{x_i^j - x_{n+1}^j + (\theta - 1) (n-i)}{x_i^j - x_{n+1}^j + (\theta - 1 )(n-i + 1) } \cdot  \frac{x_i^{j} - x_n^{j-1} + (\theta - 1) (n-i) +\theta }{x_i^{j} - x_n^{j-1} + (\theta - 1)(n-i)  + 1},
\end{split}
\end{equation}
where $\theta > 0$ is fixed throughout this discussion. This particular form of the jump rates is a consequencence of our definition of the dynamics through Jack polynomials - see Section \ref{Section2} below. Although the expression in (\ref{S1Rates}) is rather involved, it turns out that it provides the correct way to discretize the dynamics of the multilevel Dyson Brownian motion \cite{GS1}.

All clocks are assumed to be independent and when the $x_i^j$ clock rings the particle jumps to the right by $1$. We observe that the above jump rates induce the following push-block dynamics, which ensure that the process $x_i^j$ will always satisfy  $x_i^{j+1} \geq x_i^j > x_{i+1}^{j+1}$ for $1 \leq i \leq j \leq N-1$  (i.e. our original process $X(s;N),{s\geq 0}$ will never leave $\mathbb{GT}^N$).

\vspace{-10mm}

\begin{figure}[h]
\centering
\begin{minipage}{.40\textwidth}
\vspace{8.5mm}
  \centering
  \includegraphics[width=0.9\linewidth]{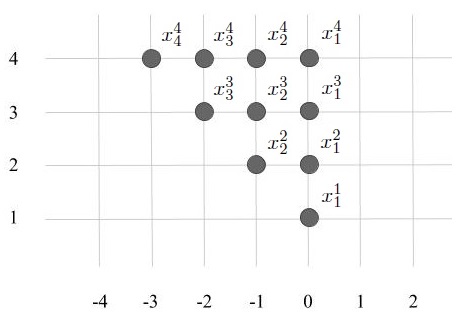}
\captionsetup{width=.9\linewidth}
  \caption{Initial condition for the process $x_i^j = X_i^j - i + 1$ when $N = 4$.}
\label{S1_1}	
\end{minipage}
\hspace{2mm}
\begin{minipage}{.50\textwidth}
\vspace{8.5mm}
  \centering
  \includegraphics[width=0.9\linewidth]{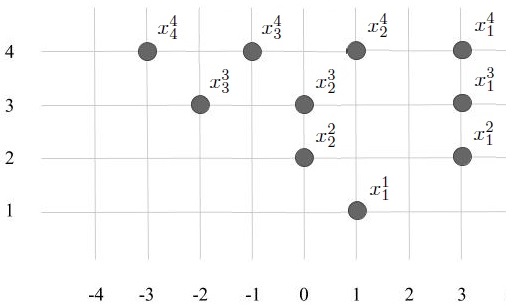}
\captionsetup{width=.9\linewidth}
  \caption{Sample particle configuration for $N = 4$.}
\label{S1_2}	
\end{minipage}
\end{figure}

From (\ref{S1Rates}) we see that the jump rate of a particle $x_i^j$ depends only on the positions of the particles on rows $j$ and $j-1$. If $i > 1$ and $x_{i-1}^{j-1} = x^j_i + 1$ , then we notice that the first product in (\ref{S1Rates}) vanishes and so $q(x^j_i) = 0$. We say that the particle $x^{j-1}_{i-1}$ has blocked $x^j_i$ and the latter cannot jump to the right. In Figure \ref{S1_2} particle $x^4_4$ is blocked by its bottom right neighbor $x_3^3$ and particle $x_2^2$ by its bottom right neighbor $x_1^1$. 

We next suppose that we have $x_i^{j+1} = x_i^j$ and that $x_i^j$ has jumped to the right by $1$. In this case we see that the denominator of $q(x^{j+1}_i)$ in (\ref{S1Rates}) vanishes and so the jump rate becomes infinite. This causes $x_i^{j+1}$ to immediately jump to the right together with $x_i^j$ and we say that $x_i^j$ has pushed $x_i^{j+1}$ to the right.  This pushing mechanism continues upward, so if for example the move $x_i^{j+1} \rightarrow x_i^{j+1} + 1$ has made this particle surpass $x_i^{j+2}$ then $x_i^{j+2}$ is also pushed to the right by $1$. In general, if a particle $x_i^j$ has jumped to the right by $1$, then we need to find the longest string of particles such that $x_i^j = x_i^{j+1} = \cdots = x_i^{j + r}$ and move all of them simultaneously to the right by $1$. 

We illustrate the latter push dynamics with an example. If $x_2^3$ jumps to the right twice and then $x_1^2$ jumps to the right once, we will obtain Figure \ref{S1_3} from Figure \ref{S1_2}. The first jump of $x_2^3$ simply moves that particle to the right by $1$. The second jump moves it to the right, but also pushes $x_2^4$ to the right by $1$. Finally, when $x_1^2$ moves to the right it pushes $x_1^3$, which in turn pushes $x_1^4$ to the right and so altogether all three particles move to the right by $1$.
\begin{figure}[h]
\centering
  \scalebox{0.7}{\includegraphics[width=0.9\linewidth]{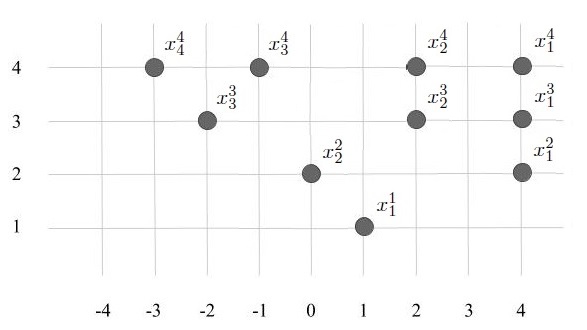}}
\captionsetup{width=.95\linewidth}
  \caption{Result of particle $x_2^3$ jumping twice and $x_1^2$ jumping once, starting from Figure \ref{S1_2}}
  \label{S1_3}
\end{figure}
A simple heuristic to help the reader remember how the push-block dynamics works is that lower particles are heavier and higher particles are lighter. Then when a heavy particle moves it pushes all lighter particles above it, and when a lighter particle tries to jump and there is a heavier one blocking it, it will not move.\\

The above push-block dynamics ensures that our process $X(s;N), s \geq 0$ never leaves $\mathbb{GT}^N$ and is thus a well-defined process there. Although the dynamics that we presented above is certainly sufficient to define the process $X(s;N), s \geq 0$ we will postpone a formal definition until Definiton \ref{mainDef} later in the text. That definition will be based on the formalism of multilevel probability distributions and stochastic dynamics, built from Jack polynomials, which is presented in Section \ref{Section2} below.

In what follows we summarize the main results of our paper for the process $X(s;N), s\geq 0$.

\begin{theorem}\label{Theorem1}
Let $X(s;N), s\geq 0$ be as in Definition \ref{mainDef} with $\theta \geq 1$. Fix $t > 0$, $s \geq 0$ and $k \in \mathbb{N}$. Then as $N \rightarrow \infty$ the sequence
$$\left(X_1^N\left (tN + s;N\right)-X_{1}^{N-1}\left(tN + s;N\right),...,X_{1}^{N-k+1}\left(tN + s;N\right)-X_{1}^{N-k}\left(tN + s;N\right)\right)$$ converges in law to a random vector $(Q_1,...,Q_k) \in \mathbb{Z}_{\geq 0}^k$, where $Q_1,...,Q_k$ are i.i.d. random variables with
$$\mathbb{P}( Q_1 = n) = (1- p)^{-\theta}\frac{\Gamma(n+\theta)}{\Gamma(n+1)\Gamma(\theta)} p^n, \hspace{1mm} n \in \mathbb{Z}_{\geq 0} \mbox{, and } p = \frac{\sqrt{t}}{1 + \sqrt{t}}.$$
\end{theorem}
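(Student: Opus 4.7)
The plan is to study the edge gap process $\vec{q}(u) = (q_1(u), \ldots, q_k(u))$, defined by $q_j(u) := X_1^{N-j+1}(u;N) - X_1^{N-j}(u;N)$, as a continuous-time Markov chain on $\mathbb{Z}_{\geq 0}^k$; to show that its generator converges as $N \to \infty$ to that of an explicit limiting chain whose local dynamics is of zero-range type; and to conclude by equilibrating this limiting chain from the initial condition $\vec{q}(0) = (0,\ldots,0)$ over the long time horizon $tN + s$. The approach is modeled on the strategy of \cite{GS2} for the multilevel Dyson Brownian motion, with the random-matrix inputs there replaced by the discrete $\beta$-ensemble results of \cite{BGG}.

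\textbf{Step 1 (rate asymptotics).}
A coordinate $q_j$ only changes when the clock of $x_1^{N-j+1}$ or $x_1^{N-j}$ rings, possibly modulated by a push propagating up or down through equal neighboring rightmost particles. Thus the central analytic task is to compute the $N \to \infty$ limit of the jump rate $q(x_1^\ell)$ in (\ref{S1Rates}) when $\ell = N - j + 1$ for fixed $j$. I would split the product over $n$ in (\ref{S1Rates}) into a bulk part (indices $n$ of order $N$) and a boundary part (indices $n$ of order $1$). For the bulk part, the law of large numbers for the empirical measure of the top row (which, per the discussion in Section \ref{Section5}, is a discrete $\beta$-ensemble) together with the estimates of \cite{BGG} allow one to replace the sum of logarithms of the factors by a deterministic Riemann integral against the limit density $\rho_t$; this contributes an overall multiplicative constant $A(t,\theta)$. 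The boundary part, by contrast, depends only on the few nearby across-row gaps through surviving local factors such as $\frac{q_j + \theta}{q_j + 1}$, which capture the nontrivial dependence on the state of $\vec{q}$.

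\textbf{Step 2 (limiting chain, invariance and equilibration).}
Given the rates in closed form, I would identify the limiting dynamics as a Markov chain on $\mathbb{Z}_{\geq 0}^\infty$ whose generator is local: units of ``gap'' move between neighboring sites at rates determined by a few local occupancies, with push-through effects when a string of adjacent $q_j$'s vanish. A direct balance-equation computation then verifies that the product measure $\nu_p^{\otimes \infty}$, with $\nu_p(n) = (1-p)^\theta \Gamma(n+\theta) / [\Gamma(n+1)\Gamma(\theta)] \cdot p^n$, is invariant; the specific value $p = \sqrt{t}/(1+\sqrt{t})$ is pinned down by matching the macroscopic speed of the rightmost particle (determined by the bulk density $\rho_t$) against the mean jump rate under $\nu_p$. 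Since $X(0;N) = 0$ gives $\vec{q}(0) = 0$ and the limiting chain is irreducible with exponentially fast equilibration, over the long horizon $tN + s \to \infty$ the first $k$ coordinates $(q_1(tN+s), \ldots, q_k(tN+s))$ converge in distribution to $\nu_p^{\otimes k}$, which is the claim of the theorem.

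\textbf{Main obstacle.}
The technically hardest step is the uniform asymptotic control of the bulk product in Step 1. Each factor is of the form $1 + O(\mathrm{gap}^{-1})$, but there are $\Theta(N)$ such factors, so the plain law of large numbers for the empirical measure is not enough; one needs quantitative rates together with edge-rigidity-type estimates for the discrete $\beta$-ensemble from \cite{BGG}. A second delicate point is that quantities such as the within-row spacing $x_1^{N-j+1} - x_2^{N-j+1}$ appear in the first product of (\ref{S1Rates}) and fluctuate on a nontrivial mesoscopic scale; I must nonetheless show that their associated factors tend to $1$ in probability, so that the $N \to \infty$ limit of $q(x_1^\ell)$ depends only on the across-row gaps $q_j$ that feature in the theorem statement.
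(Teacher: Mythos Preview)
Your plan has a structural gap at the very first step: the edge gap process $\vec{q}(u)$ is \emph{not} a Markov chain on $\mathbb{Z}_{\geq 0}^k$. The jump rate $q(x_1^{N-j+1})$ in (\ref{S1Rates}) depends on \emph{all} particles on rows $N-j+1$ and $N-j$, not only on the rightmost ones, so $\vec{q}$ is merely a projection of the full process. You cannot speak of ``its generator'' converging; at best you have a sequence of non-Markovian processes whose effective rates are random functionals of the bulk.

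Even if you repair this (say via a martingale-problem formulation for the non-Markov projection), a second difficulty appears. Your bulk constant $A(t,\theta)$ is computed from the limit density $\rho_t$, but at an intermediate time $u\in[0,tN]$ the relevant empirical density is $\rho_{u/N}$. Hence the effective sink rate is time-inhomogeneous and in fact diverges as $u\to 0^+$ (since $\theta(1+\sqrt{\tau})/\sqrt{\tau}\to\infty$ as $\tau\downarrow 0$). An equilibration argument from the initial condition $\vec q(0)=0$ over the window $[0,tN]$ would therefore require controlling a time-inhomogeneous chain through a singularity, together with uniform-in-$u$ rigidity for the bulk; neither is available from \cite{BGG} in the form you need.

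The paper sidesteps both issues by abandoning dynamics entirely for Theorem~\ref{Theorem1}. It works at the \emph{fixed} time $tN+s$ and exploits the Jack--Gibbs property (\ref{JackGibbs}): conditionally on the row $X^{N-m+1}$, the law of $X_1^{N-m}$ has the explicit product form (\ref{S3eq4}). One then shows the ratio $f_N(k)/f_N(0)$ converges in probability to $p^k\Gamma(k+\theta)/(k!\,\Gamma(\theta))$ using only three asymptotic inputs collected in Lemma~\ref{LSep}: (i) $(\ell_N-\ell_{N-1})^{-1}\to 0$, (ii) $\sum_j(\ell_N-\ell_j)^{-1}\to\theta^{-1}\log(1+t^{-1/2})$, and (iii) the interlaced analogue for the $(N{-}1)$st row. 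These in turn come from an exact observable identity (Proposition~\ref{conProp}, via Nekrasov's equation for the discrete $\beta$-ensemble) combined with a squeeze argument. The value $f_N(0)\to(1-p)^\theta$ is then forced by normalization, giving $k=1$; the general $k$ follows by induction using that conditioning on the top $m$ rows reduces to the same two-row computation. The dynamical convergence you are aiming for is actually the content of Theorem~\ref{Theorem2}, which is proved in Section~\ref{Section4} \emph{after} and \emph{using} Theorem~\ref{Theorem1} to identify the stationary initial law of the limiting zero-range process.
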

\begin{remark}
In \cite{GS2}, the authors considered the same limit as in Theorem \ref{Theorem1} for $t = \theta^{-1}$, for the multilevel Dyson Brownian motion. In the limit they also obtained that the separations of adjacent particles on the right edge at fixed time are i.i.d. random variables, but with the Gamma distribution with density 
$$f(x) = \frac{ \theta^\theta}{\Gamma(\theta)} x^{\theta -1} e^{-\theta x}.$$
 In this sense, we see that Theorem \ref{Theorem1} produces a discrete version of the result in \cite{GS2}.
\end{remark}

Our next aim is to formulate a dynamic multilevel convergence result about the process $X(s;N),s\geq 0$, but before we do we describe the limiting object, which is a certain zero range process with local interactions.

Let us fix $k\geq 1$, $t > 0$ and $\theta > 0$. Suppose we have $k$ piles of particles at locations $1,...,k$, and at time $s$ the $i$-th pile contains a non-negative integer number of particles $Q_i(s)$. In addition, we assume we have a pile with infinitely many particles at location $0$ and a sink at location $k+1$. The $i$-th pile with $i \in \{1,...,k\}$ has an exponential clock with parameter $\lambda_i(s) = \theta\cdot \frac{\theta + Q_i(s)}{1 + Q_i(s)}$. The clocks are independent of each other and when the $i$-th clock rings, a particle from the closest non-empty pile to the left jumps into pile $i$. The infinite pile at location $0$, ensures that there is always a non-empty pile to the left and is a source for new particles to enter the system. In addition, the sink has an exponential clock with constant parameter $\lambda_{sink} = \theta \cdot \frac{1 + \sqrt{t}}{\sqrt{t}}$ and when the clock rings a single particle jumps from the nearest non-empty pile to the left into the sink and disappears. A graphical representation of this process is given in Figure \ref{S1_4}. We also isolate this construction in a definition for future reference.
\begin{figure}[h]
\centering
  \scalebox{0.8}{\includegraphics[width=0.9\linewidth]{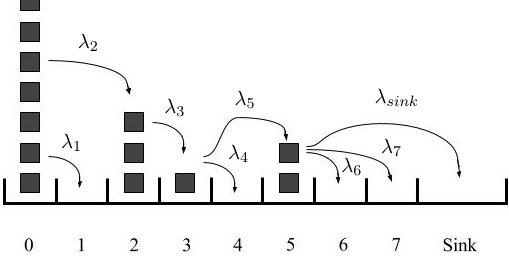}}
\captionsetup{width=.95\linewidth}
  \caption{The process $Q(s)$ when $k = 7$ at a fixed time. The arrows indicate the possible jumps that particles can make and the rates with which they happen are written above them }
  \label{S1_4}
\end{figure}

\begin{definition}\label{S1DefQ}
For $k\geq 1$, $t > 0$ and $\theta > 0$ we let $Q(s) = (Q_1(s),...,Q_k(s))$ be the continuous time Markov chain on $\mathbb{N}_0^k$ defined through the dynamics in the previous paragraph and with initial distribution such that $Q_1(0),...,Q_k(0)$ are i.i.d. random variables with
$$\mathbb{P}( Q_1(0) = n) = (1- p)^{-\theta}\frac{\Gamma(n+\theta)}{\Gamma(n+1)\Gamma(\theta)} p^n, \hspace{1mm} n \in \mathbb{Z}_{\geq 0} \mbox{, and } p = \frac{\sqrt{t}}{1 + \sqrt{t}}.$$
\end{definition}
It is easy to check that $Q(s)$ is a stationary pure jump continuous time Markov process and we view it as an element in $D^k$ - the space of right continuous left limited functions from $[0,\infty)$ to $\mathbb{N}_{\geq 0}^k$ with the usual Skorohod topology (see e.g. \cite{EK}). With this notation we formulate the following theorem.
\begin{theorem}\label{Theorem2}
Let $X(s;N), s\geq 0$ be as in Definition \ref{mainDef} with $\theta \geq 1$. Fix $t > 0$ and $k \in \mathbb{N}$. Then 
$$\left(X^N_1(tN + s;N) - X_1^{N-1}(tN + s;N),...,X^{N-k+1}_1(tN + s;N) - X_1^{N-k}(tN+ s;N)\right),  s \geq 0$$
 converges in the limit $N \rightarrow \infty$ in law on $D^k$ to the process $Q(s)$ from Definition \ref{S1DefQ}.
\end{theorem}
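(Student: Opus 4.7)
My plan is to prove convergence in $D^k$ by combining (i) convergence of the fixed-time distribution at $s=0$, which is provided by Theorem \ref{Theorem1}, with (ii) tightness in $D^k$ and (iii) identification of any subsequential limit as the chain $Q(s)$ via a martingale-problem argument. The broad strategy mirrors that of \cite{GS2} for the MDBM, though the discreteness of our state space simplifies several steps.

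\textbf{First,} I would classify the elementary transitions of $G^N(s) := (X_1^{N-i+1}(tN+s;N) - X_1^{N-i}(tN+s;N))_{i=1}^k$, which is \emph{not} Markov on its own. A ringing at the clock of $X_1^{N-i+1}$ with $1 \leq i \leq k$ induces a ``pile $i$'' transition: $G_i^N$ grows by $1$, and the upward push cascade from Section \ref{Section1.2} subtracts $1$ from the closest non-empty of $G_{i-1}^N, \ldots, G_1^N$ (or nothing, corresponding to the source, if all are empty). A ringing at the clock of $X_1^{N-k-j}$ with $j \geq 0$ only affects $(G_1^N, \ldots, G_k^N)$ when $G_{k+1}^N = \cdots = G_{k+j}^N = 0$, in which case the cascade reaches $\{N-k+1, \ldots, N\}$ and produces a sink transition: a unit is removed from the closest non-empty of $G_k^N, \ldots, G_1^N$. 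The core task is then to show that the pre-limit rates of these two families of events converge to $\lambda_i(G_i)$ and $\lambda_{sink}$ of Definition \ref{S1DefQ}.

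\textbf{For the pile rates,} isolating the $n = 1$ contribution from the second family of factors in (\ref{S1Rates}) extracts exactly the ratio $(G_i^N + \theta)/(G_i^N + 1)$, and it remains to show that the surviving product over $n$ tends to $1$. Here I would use Section \ref{Section5}: the fixed-time law of row $j = tN + O(1)$ is an explicit discrete $\beta$-ensemble, for which \cite{BGG} provides LLN and rigidity for the empirical measure. Since consecutive rows $j$ and $j-1$ share the same macroscopic density $\rho_t$, the two types of factors in (\ref{S1Rates}) become asymptotically reciprocal and their product collapses to $1$ after a careful split of the range of $n$ into a bulk part (where individual factors are $1 + O(n^{-2})$ and their logarithms sum absolutely) and an edge part (controlled directly by the rigidity estimates).

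\textbf{For the sink rate,} the effective rate of a sink transition equals
\[
R^N_{sink}(s) \;=\; \sum_{j \geq 0} q(x_1^{N-k-j})\,\mathbf{1}\{G^N_{k+1}(s) = \cdots = G^N_{k+j}(s) = 0\}.
\]
By the pile-rate analysis, the $j$-th summand converges to $\theta(G_{k+j+1}+\theta)/(G_{k+j+1}+1)\cdot \mathbf{1}\{G_{k+1}=\cdots=G_{k+j}=0\}$. Extending Theorem \ref{Theorem1} to any fixed number of consecutive right-edge gaps makes $(G_{k+1}^N, G_{k+2}^N, \ldots)$ asymptotically i.i.d.\ negative binomial and independent of $(G_1^N, \ldots, G_k^N)$, so taking expectations termwise yields
\[
\sum_{j \geq 0} \theta\,\mathbb{E}\!\left[\frac{Q+\theta}{Q+1}\right](1-p)^{\theta j} \;=\; \frac{\theta(1 - (1-p)^\theta)}{p} \cdot \frac{1}{1-(1-p)^\theta} \;=\; \frac{\theta}{p} \;=\; \lambda_{sink},
\]
and the exponential decay $(1-p)^{\theta j}$ provides the uniform tail control. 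With both rate limits established, tightness follows by a standard criterion (e.g.\ \cite{EK}) using the resulting uniform jump-rate bound and the marginal tightness from Theorem \ref{Theorem1}, and a martingale-problem argument identifies the subsequential limit as $Q(s)$. \textbf{The main obstacle} is the sink-rate averaging: the Markov dynamics on $(G_1, \ldots, G_k)$ emerges only after decoupling this vector from the infinite tail $(G_{k+1}, G_{k+2}, \ldots)$ and averaging the contributions of arbitrarily deep push cascades. Making this rigorous demands a strengthening of Theorem \ref{Theorem1} to an asymptotic independence statement for many consecutive gaps, together with uniform-in-$(N,s)$ control of deep cascade probabilities, which I expect to follow from large-deviation estimates for the top particles of the discrete $\beta$-ensemble.
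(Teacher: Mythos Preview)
Your overall strategy---tightness plus martingale-problem identification---matches the paper's. Your pile-rate analysis is also close in spirit: the paper isolates the same $\frac{G_i+\theta}{G_i+1}$ factor and shows the remaining product tends to $1$ using Lemma~\ref{LSep} (which in turn rests on the discrete $\beta$-ensemble identification and Nekrasov's equation).

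The substantive difference is in how you handle the sink rate. You work inside the full process $X(s;N)$, so the effective rate at which $X_1^{N-k}$ increases is the random sum $R^N_{sink}(s)=\sum_{j\ge 0} q(x_1^{N-k-j})\,\mathbf{1}\{G_{k+1}^N=\cdots=G_{k+j}^N=0\}$, and you propose to \emph{average} this over the tail $(G_{k+1}^N,G_{k+2}^N,\ldots)$. Your expectation computation is correct and yields $\lambda_{sink}$, but the martingale problem requires that the random rate itself converge to the constant $\lambda_{sink}$ (in $L^1$ or at least in probability, uniformly over time windows), not merely its expectation. Upgrading your averaging to this stronger statement would require a genuine homogenization/ergodicity argument for the tail gaps, which you acknowledge as the main obstacle but do not resolve.

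The paper sidesteps this obstacle entirely by a structural trick you have missed. By Proposition~\ref{S2PC}, the projection of $X(s;N)$ to levels $\{N-k,\ldots,N\}$ has the same law as the process $X^{multi}_{N-k,N}(s)$, in which level $N-k$ is the \emph{bottom} level. In that process the jump rate of $X_1^{N-k}$ is no longer given by the two-row formula~(\ref{S1Rates}) but by the single-row formula
\[
\theta\cdot\prod_{j=2}^{N-k}\Bigl(1+\frac{\theta}{X_1^{N-k}-X_j^{N-k}+(j-1)\theta}\Bigr),
\]
and Lemma~\ref{S4Lemma1} shows directly that this product converges in $L^1$ to $\frac{1+\sqrt t}{\sqrt t}$, i.e.\ the rate converges to the deterministic constant $\lambda_{sink}$. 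No averaging over the tail, no deep push cascades, no need to extend Theorem~\ref{Theorem1} to infinitely many gaps. So the paper's route is both simpler and avoids precisely the step you identified as hardest.
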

\begin{remark}
Theorem \ref{Theorem2} resembles Theorem 1.6 in \cite{GS2}, where the authors consider the same limit with $t = \theta^{-1}$ for multilevel Dyson Brownian motion. In that setting, the limiting object is a certain stationary Markov process, which the authors define as a weak solution of a certain system of SDEs that have a {\em local form}.
\end{remark}

\subsection{Outline and acknowledgements}\label{Section1.3}
In Section \ref{Section2} we provide the necessary background on how to develop stochastic dynamics from Jack polynomials and their positive specializations. In Section \ref{Section3} we use that the fixed time distribution of the top row of $X(s;N), s\geq 0$ is described by the discrete $\beta$-ensemble of \cite{BGG}. Relying on various previously known results about the discrete $\beta$-ensemble, such as law of large numbers of the empirical measures and large deviation estimates for the edge, we prove Theorem \ref{Theorem1}. In Section \ref{Section4} we prove Theorem \ref{Theorem2} using Martingale Problem convergence techniques, in the spirit of Stroock and Varadhan. Section \ref{Section5} explains the link between $X(s;N), s\geq 0$ and the discrete $\beta$-ensemble and supplies the proofs of various results used throughout the text.\\

The authors would like to thank Vadim Gorin for suggesting this problem to them and for numerous fruitful discussions.

\section{Multilevel dynamics via Jack polynomials} \label{Section2} 
The process $X(s;N)$ from Section \ref{Section1.2} is a special case of a multilevel dynamics, defined through Jack polynomials. This section provides the necessary background for the construction of these dynamics and forms the theoretical basis for our definition of $X(s;N)$, presented in the beginning of Section \ref{Section3}. 

\subsection{General definitions}\label{Section2.1}

We summarize some facts about partitions and Jack symmetric polynomials, using \cite{Mac} and Section 2.1 in \cite{GS1} as main references. Readers familiar with these polynomials can proceed to Section \ref{Section2.2}.

We start by fixing some terminology and notation. A {\em partition} is a sequence $\lambda =(\lambda_1, \lambda_2,\cdots)$ of non-negative integers such that $\lambda_1 \geq \lambda_2 \geq \cdots$ and all but finitely many elements are zero. We denote the set of all partitions by $\mathbb{Y}$. The {\em length} $\ell(\lambda)$ is the number of non-zero $\lambda_i$ and the {\em weight} is given by $|\lambda| = \lambda_1 + \lambda_2 + \cdots$. For $N \geq 0$ we let $\mathbb{Y}^N$ be the set of partitions of length at most $N$, where we agree that $\mathbb{Y}^0$ consists of a single partition of weight $0$, which we denote by $\varnothing$. We say that $\lambda, \mu \in \mathbb{Y}$ {\em interlace} and write $\mu \prec \lambda$ if 
$$\lambda_1 \geq \mu_1 \geq \lambda_2 \geq \mu_2 \geq \cdots.$$

A {\em Young diagram} is a graphical representation of a partition $\lambda$, with $\lambda_1$ left justified boxes in the top row, $\lambda_2$ in the second row and so on. In general, we do not distinguish between a partition $\lambda$ and the Young diagram representing it. The {\em conjugate} of a partition $\lambda$ is the partition $\lambda'$, whose Young diagram is the transpose of the diagram $\lambda$. In particular, we have the formula $\lambda_i'  =|\{j \in \mathbb{N}: \lambda_j \geq i\}|$. For a box $\square =(i,j)$ of a Young diagram $\lambda$ (i.e., a pair $(i,j)$ with $\lambda_i \geq j$) we let
$$\mbox{ $a(i,j;\lambda)=\lambda_i-j$,\hspace{5mm} $l(i,j;\lambda)=\lambda_j'-i,$\hspace{5mm}  $a'(i,j; \lambda)=j-1$,\hspace{5mm}  $l'(i,j; \lambda)=i-1$}.$$
The quantities $a(i,j;\lambda)$ and $l(i,j;\lambda)$ are called the {\em arm} and {\em leg lengths} respectively, while $a'(i,j;\lambda)$ and $l'(i,j;\lambda)$ are called the {\em co-arm} and {\em co-leg lengths} respectively. When $\lambda$ is clear from context we will omit it from the notation and write $a(i,j)$ (or $a(\square)$) and $l(i,j)$ (or $l(\square)$).\\

Let $\Lambda_X$ denote the $\mathbb{Z}_{\geq 0}$ graded algebra over $\mathbb{C}$ of symmetric polynomials in countably many variables $X = (x_1,x_2,...)$ of bounded degree, see e.g. Chapter I of \cite{Mac} for general information on $\Lambda_X$. One way to view $\Lambda_X$ is as an algebra of polynomials in Newton power sums
$$p_k(X) = \sum_{i = 1}^\infty x_i^k, \hspace{5mm} \mbox{for } k\geq 1.$$ 
For any partition $\lambda$ we define 
$$p_\lambda(X) = \prod_{i = 1}^{\ell(\lambda)} p_{\lambda_i}(X),$$
and note that $p_\lambda(X)$, $\lambda \in \mathbb{Y}$ form a linear basis of $\Lambda_X$.

In what follows we fix a parameter $\theta \in (0,\infty)$. Unless the dependence on $\theta$ is important we will suppress it from our notation, similarly for the variable set $X$. We write $J_\lambda(X;\theta)$ for the {\em Jack polynomial} with parameter $\theta$,  which is indexed by the partition $\lambda$. The polynomials $J_\lambda(X;\theta)$, $\lambda \in \mathbb{Y}$ form another linear basis of $\Lambda_X$ and many of their properties can be found in Section 10, Chapter VI of \cite{Mac}. We note that in \cite{Mac} Macdonald uses the parameter $\alpha$, corresponding to $\theta^{-1}$ in our notation. Our choice to work with $\theta$ is made after \cite{KOO}. If we specialize the variables $x_{N+1}, x_{N+2},...$ to all equal $0$ in the formula for $J_\lambda(X;\theta)$ we obtain a symmetric polynomial in $N$ variables $(x_1,...,x_N)$, denoted by $J_\lambda(x_1,...,x_N;\theta)$. The leading term of $J_\lambda(X;\theta)$ and $J_\lambda(x_1,...,x_N;\theta)$ is given by $x_1^{\lambda_1}\cdots x_{\ell(\lambda)}^{\lambda_{\ell(\lambda)}}$ (if $N \geq \ell(\lambda)$) and we have the following Sekiguchi differential operator eigenrelation
\begin{equation}
\begin{split}
&\frac{1}{\prod_{1\leq i<j \leq N}(x_i-x_j)}\det\left[x_i^{N-j}\left(x_i\frac{\partial}{\partial{x_i}}+(N-j)\theta+u\right)\right]J_{\lambda}(x_1,...,x_N;\theta)= \\
&\left(\prod_{i=1}^N\left(\lambda_i+(N-i)\theta+u\right)\right)\cdot J_{\lambda}(x_1,...,x_N;\theta).
\end{split}
\end{equation}
The latter two properties uniquely define $J_\lambda(x_1,...,x_N;\theta)$ and $J_\lambda(X;\theta)$. We also use the dual Jack polynomials $\widetilde{J}_{\lambda}$, which differ from $J_\lambda$ by an {\em explicit} constant, depending on $\lambda$:
\begin{equation}\label{S2eqDual}
\widetilde{J}_{\lambda}=J_{\lambda} \cdot \prod_{\square\in{\lambda}}\frac{a(\square)+\theta{l(\square)}+\theta}{a(\square)+\theta{l(\square)+1}}.
\end{equation}

We next proceed to define the {\em skew Jack polynomials} (see Chapter VI in \cite{Mac} for details). Take two sets of variables $X = (x_1,x_2,...)$ and $Y = (y_1,y_2,...)$ and a symmetric polynomial $f$ in countably many variables. Let $(X,Y)$ denote the union of sets of variables $X$ and $Y$. Then we can view $f(X,Y) \in \Lambda_{(X,Y)}$ as a symmetric polynomial in $x_i$ and $y_j$ together. More precisely, if 
$$f = \sum_{\lambda \in \mathbb{Y}}C_\lambda p_\lambda =  \sum_{\lambda \in \mathbb{Y}}C_\lambda \prod_{i = 1}^{\ell(\lambda)}p_{\lambda_i}$$
is the expansion of $f$ in the basis $p_\lambda$ (in the above sum $C_\lambda = 0$ for all but finitely many terms), then
$$f(X,Y) =  \sum_{\lambda \in \mathbb{Y}}C_\lambda \prod_{i = 1}^{\ell(\lambda)}(p_{\lambda_i}(X) + p_{\lambda_i}(Y)).$$ 
In particular, we see that $f(X,Y)$ is the sum of products of symmetric polynomials in $x_i$ and symmetric polynomials in $y_j$. The skew Jack polynomials $J_{\lambda/\mu}$ are defined as the coefficients in the expansion
\begin{equation}\label{skewJ}
J_{\lambda}(X,Y;\theta) = \sum_{\mu \in \mathbb{Y}} J_\mu(X;\theta) J_{\lambda/\mu}(Y;\theta).
\end{equation}
\begin{remark} The skew Jack polynomial $J_{\lambda/\mu}$ is $0$ unless $\mu \subset \lambda$ (i.e. $\lambda_i \geq \mu_i$ for $i \in \mathbb{N})$, in which case it is homogeneous of degree $|\lambda| - |\mu|$. When $\lambda = \mu$, $J_{\lambda/\mu} = 1$ and if $\mu = \varnothing$, then $J_{\lambda/\mu} = J_\lambda$.
\end{remark}
One similarly defines $\widetilde{J}_{\lambda/\mu}$ as the coefficients in the expansion
$$\widetilde{J}_{\lambda}(X,Y;\theta) = \sum_{\mu \in \mathbb{Y}} \widetilde{J}_\mu(X;\theta) \widetilde{J}_{\lambda/\mu}(Y;\theta).$$
We record the following generalization of (\ref{skewJ}) for later use (cf. Section 7 in Chapter VI of \cite{Mac}):
\begin{equation}\label{skewJ2}
\begin{split}
J_{\lambda/\kappa}(X,Y;\theta) = \sum_{\mu \in \mathbb{Y}} J_{\mu/\kappa}(X;\theta) J_{\lambda/\mu}(Y;\theta),\\
\widetilde{J}_{\lambda/\kappa}(X,Y;\theta) = \sum_{\mu \in \mathbb{Y}} \widetilde{J}_{\mu/\kappa}(X;\theta) \widetilde{J}_{\lambda/\mu}(Y;\theta),
\end{split}
\end{equation}
where $\kappa \in \mathbb{Y}$. Thus (\ref{skewJ}) is a special case of (\ref{skewJ2}) when $\kappa = \varnothing$.\\

An algebra homomorphism $\rho$ from $\Lambda$ to the set of complex numbers is called a {\em specialization}. If $\rho$ takes positive values on all (skew) Jack polynomials, it will be called {\em Jack-positive}. From \cite{KOO} we have the following classification of all Jack-positive specializations.
\begin{proposition}\label{propClass}
For any fixed $\theta>0$, Jack-positive specializations can be parametrized by triplets $(\alpha,\beta,\gamma)$, where $\alpha,\beta$ are sequences of real numbers with 
$$\alpha_1\geq{\alpha_2}\geq \cdots \geq{0}, \hspace{5mm} \beta_1\geq{\beta_2}\geq \cdots\geq{0}, \hspace{5mm}\sum_{i = 1}^\infty(\alpha_i+\beta_i)<\infty$$
and $\gamma$ is a non-negative real number. The specialization corresponding to a triplet $(\alpha,\beta,\gamma)$ is given by its values on the Newton power sum $p_k,k\geq{1}$:
$$p_1\mapsto p_1(\alpha,\beta,\gamma)=\gamma+\sum_{i= 1}^\infty(\alpha_i+\beta_i),$$
$$p_k\mapsto p_k(\alpha,\beta,\gamma)=\sum_{i = 1}^\infty{\alpha_i}^k+(-\theta)^{k-1}\sum_i{\beta_i}^k,k\geq{2}.$$
\end{proposition}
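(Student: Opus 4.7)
The plan is to prove the two directions of the classification separately. For the forward direction, that each triple $(\alpha,\beta,\gamma)$ defines a Jack-positive specialization, it suffices by multiplicativity on the power-sum basis to treat three atomic cases. A single-variable specialization $\rho_\alpha: x_i \mapsto \alpha\cdot \delta_{i,1}$ with $\alpha > 0$ is Jack-positive because Jack polynomials and skew Jack polynomials are monomial-positive, as follows from the Knop--Sahi combinatorial formula. The Plancherel atom $\rho_\gamma: p_1 \mapsto \gamma,\; p_k \mapsto 0$ for $k \geq 2$ arises as the $N \to \infty$ limit of the $N$-variable specialization at $x_1 = \cdots = x_N = \gamma/N$, which preserves Jack-positivity in the limit. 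Finally, the conjugate atoms $\rho_\beta: p_k \mapsto (-\theta)^{k-1}\beta^k$ are handled by the Macdonald involution $\omega_\theta$ on $\Lambda$, under which $J_\lambda$ maps to an explicit positive multiple of $J_{\lambda'}$; this turns a $\beta$-atom into an $\alpha$-atom for the conjugate partition, and hence is positive on all $J_\lambda$ and $J_{\lambda/\mu}$.

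For the harder converse direction, let $\rho$ be an arbitrary Jack-positive specialization. Choose an auxiliary Jack-positive $\sigma$ for which the Cauchy sum
$$Z_\sigma = \sum_{\lambda \in \mathbb{Y}} J_\lambda(\rho;\theta)\,\widetilde{J}_\lambda(\sigma;\theta)$$
is finite (e.g. a single-variable specialization at a sufficiently small positive real), and define the probability measure $P_\rho(\lambda) = Z_\sigma^{-1}\, J_\lambda(\rho;\theta)\,\widetilde{J}_\lambda(\sigma;\theta)$ on $\mathbb{Y}$. Using (\ref{skewJ2}) and the Cauchy identity iteratively, the induced marginals on $\mathbb{Y}^N$ form a coherent family with respect to the down-transition kernels of the Jack branching graph, whose edge weights are encoded in the skew Jack polynomial coefficients. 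The classification of $\rho$ then reduces to identifying the extreme points of the simplex of normalized nonnegative harmonic functions on this branching graph.

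The main obstacle is the boundary identification. I would use the Vershik--Kerov ergodic method: sample an infinite chain $\lambda^{(1)} \subset \lambda^{(2)} \subset \cdots$ under $P_\rho$ and show the scaled row and column lengths $\lambda^{(N)}_i/N$ and $(\lambda^{(N)})'_j/N$ converge $P_\rho$-almost surely to deterministic numbers which are then identified with $\alpha_i$ and $\beta_j$; the residue $\gamma = \lim_N N^{-1}|\lambda^{(N)}| - \sum_i \alpha_i - \sum_j \beta_j$ gives the Plancherel parameter. The key analytic step is then to prove that the normalized Jack characters
$$\frac{\widetilde{J}_{\lambda^{(N)}}(x_1,\ldots,x_m;\theta)}{\widetilde{J}_{\lambda^{(N)}}(1^N;\theta)}$$
converge along such $P_\rho$-typical chains to the multiplicative functional determined by $p_k \mapsto p_k(\alpha,\beta,\gamma)$ of the proposition --- the Jack analog of the Edrei--Thoma theorem for Schur polynomials. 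This convergence is the technical crux, since for $\theta \neq 1$ one loses the determinantal/minor representation of characters available in the Schur case; in \cite{KOO} it is carried out via the Okounkov--Olshanski binomial formula for Jack polynomials and the theory of shifted Jack polynomials, which I would adopt here as well.
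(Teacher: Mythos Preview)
The paper does not prove this proposition at all: it is stated as a known result with the sentence ``From \cite{KOO} we have the following classification of all Jack-positive specializations,'' and no argument is given. Your proposal, by contrast, sketches an actual proof, and the outline you give --- positivity of the three atomic specializations via monomial-positivity, limits, and the $\omega_\theta$ involution for the forward direction; the Vershik--Kerov ergodic method combined with the Okounkov--Olshanski binomial formula and shifted Jack polynomials for the boundary identification --- is indeed the strategy carried out in \cite{KOO}. So your approach is correct and is essentially the proof the paper is citing, but it goes well beyond what the paper itself does, which is simply to quote the result.
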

The specialization with all parameters equal to $0$ is called the {\em empty} specialization. It maps a polynomial to its constant term (i.e. the degree zero summand). 

Throughout this paper we will work with two specializations from Proposition \ref{propClass}. The first is denoted by $\mathfrak{a}^N$ and corresponds to taking $\alpha_1 = \cdots = \alpha_N = \mathfrak{a}$ and all other $\alpha, \beta$ and $\gamma$ parameters are set to zero. The second is the {\em Plancherel specialization} $\mathfrak{r}_s$, which satisfies $\gamma=s$ and all other parameters are set to $0$. 

We record some well-known explicit formulas for Jack-positive specializations (see e.g. Propositions 2.2, 2.3 and 2.4 in \cite{GS1}). In the following we write ${\bf 1}_E$ for the indicator function of the set $E$ and $(b)_n$ for the Pochhammer symbol $b(b+1)\cdots(b+n-1)$.
\begin{proposition}\label{S2Prop2}
For any $\lambda \in \mathbb{Y}$ we have
\begin{equation}\label{S2eq1} 
J_{\lambda}(\mathfrak{a}^N)={\bf 1}_{\{ \ell(\lambda) \leq N\}}\cdot \mathfrak{a}^{|\lambda|}\prod_{\square\in{\lambda}}\frac{N\theta+a'(\square)-{\theta}l'(\square)}{a(\square)+{\theta}l(\square)+\theta} \mbox{ and }
J_{\lambda}(\mathfrak{r}_{s})=s^{|\lambda|}\theta^{|\lambda|}\prod_{\square\in\lambda}\frac{1}{a(\square)+\theta{l(\square)+\theta}}.
\end{equation}
\end{proposition}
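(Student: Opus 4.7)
The strategy is to handle the two formulas in sequence: first establish the evaluation at $\mathfrak{a}^N$, and then obtain the Plancherel formula as a scaling limit of it. The underlying results are classical (appearing in Chapter VI of \cite{Mac} and stated as Propositions 2.2--2.4 in \cite{GS1}), so the plan is really to indicate how they are assembled rather than to build them from scratch.

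For $J_\lambda(\mathfrak{a}^N;\theta)$, I would start from Stanley's principal specialization identity for Jack polynomials, which gives
$$J_\lambda(1^N;\theta) = \prod_{\square \in \lambda} \frac{N\theta + a'(\square) - \theta l'(\square)}{a(\square) + \theta l(\square) + \theta}$$
whenever $\ell(\lambda) \leq N$. The extension to $\mathfrak{a}^N$ is then automatic from the homogeneity of $J_\lambda$, producing the prefactor $\mathfrak{a}^{|\lambda|}$. The indicator $\mathbf{1}_{\{\ell(\lambda)\leq N\}}$ does not need to be imposed by hand, because if $\ell(\lambda) > N$ then the box $\square = (N+1,1)\in\lambda$ has $a'(\square)=0$ and $l'(\square)=N$, making the factor $N\theta + a'(\square) - \theta l'(\square)$ vanish and forcing the whole product to be zero. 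To prove Stanley's formula itself from first principles, I would iterate the branching rule
$$J_\lambda(x_1,\ldots,x_N;\theta) = \sum_{\mu \prec \lambda} \psi_{\lambda/\mu}(\theta)\, x_N^{|\lambda|-|\mu|}\, J_\mu(x_1,\ldots,x_{N-1};\theta)$$
with its explicit coefficients $\psi_{\lambda/\mu}(\theta)$, or alternatively apply the Sekiguchi eigenrelation displayed earlier in the text with a judiciously chosen parameter $u$ combined with induction on $|\lambda|$.

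For the Plancherel specialization $\mathfrak{r}_s$, the key observation is that $\mathfrak{r}_s$ arises as the limit of $\mathfrak{a}^N$ with $\mathfrak{a} = s/N$ as $N \to \infty$. Indeed, on Newton power sums one has $p_1((s/N)^N) = s$ and $p_k((s/N)^N) = s^k/N^{k-1} \to 0$ for $k \geq 2$, matching the values at $\mathfrak{r}_s$ prescribed by Proposition \ref{propClass}. Since $J_\lambda$ is a polynomial in finitely many power sums (for $|\lambda|$ fixed), this termwise convergence lifts to $J_\lambda(\mathfrak{a}^N;\theta) \to J_\lambda(\mathfrak{r}_s;\theta)$. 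Passing to the limit on the right-hand side of the first formula, the prefactor becomes $(s/N)^{|\lambda|}$, while each of the $|\lambda|$ numerator factors $N\theta + a'(\square) - \theta l'(\square)$ has leading order $N\theta$, so the product is asymptotic to $(N\theta)^{|\lambda|}$. The powers of $N$ cancel and yield precisely $s^{|\lambda|}\theta^{|\lambda|}\prod_{\square\in\lambda}(a(\square)+\theta l(\square)+\theta)^{-1}$.

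The main technical obstacle is the derivation of Stanley's product formula (equivalently, the verification of the explicit branching coefficients $\psi_{\lambda/\mu}(\theta)$); the scaling limit step and the vanishing argument when $\ell(\lambda) > N$ are routine once that ingredient is in hand.
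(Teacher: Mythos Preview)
The paper does not give a proof of this proposition: it is simply recorded as a well-known formula with a reference to Propositions 2.2--2.4 in \cite{GS1} (and ultimately to Chapter VI of \cite{Mac}). Your sketch is a correct and standard way to derive both identities, and in fact matches how \cite{GS1} organizes the argument (principal specialization plus homogeneity for $\mathfrak{a}^N$, then the $\mathfrak{a}=s/N$, $N\to\infty$ limit for $\mathfrak{r}_s$); your observation that the factor at $\square=(N+1,1)$ vanishes when $\ell(\lambda)>N$ is a nice consistency check, though the more basic reason $J_\lambda(\mathfrak{a}^N)=0$ in that case is that $J_\lambda(x_1,\ldots,x_N)\equiv 0$ as a polynomial when $\ell(\lambda)>N$.
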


\begin{proposition}\label{S2Prop3}
For any $\lambda, \mu \in \mathbb{Y}$ we have
\begin{equation}\label{S2eqJ}
\begin{split}
J_{\lambda/\mu} (\mathfrak{a}^1)= &{\bf 1}_{\{ \mu \prec \lambda\}} \cdot \mathfrak{a}^{|\lambda| - |\mu|} \prod_{1 \leq i \leq j \leq k-1} \frac{(\mu_i - \mu_j + \theta(j-i) + \theta)_{\mu_j - \lambda_{j+1}}}{(\mu_i - \mu_j + \theta(j-i) + 1)_{\mu_j - \lambda_{j+1}}} \cdot \\
&\frac{(\lambda_i - \mu_j + \theta(j-i) + 1)_{\mu_j - \lambda_{j+1}}}{(\lambda_i - \mu_j + \theta(j-i) + \theta)_{\mu_j - \lambda_{j+1}}},
\end{split}
\end{equation}
where $k$ is any integer satisfying $\ell(\lambda) \leq k$. When $\mu$ differs from $\lambda$ by one box $\lambda = \mu \sqcup (i,j)$ the above formula can be simplified to read in terms of $\widetilde{J}_{\lambda / \mu}$
\begin{equation}\label{JackSingle}
\widetilde{J}_{\lambda/\mu}(\mathfrak{a}^1) = \mathfrak{a}\theta \cdot \prod_{r = 1}^{i-1} \frac{a(r,j;\mu) + \theta(i - r + 1)}{a(r,j;\mu) + \theta(i - r)} \frac{a(r,j;\mu) + 1 + \theta(i - r - 1)}{a(r,j;\mu) + 1+  \theta(i - r)}.
\end{equation}
\end{proposition}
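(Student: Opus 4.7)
The plan is to derive both formulas from the branching rule for Jack polynomials, established in Chapter VI, \S7 of \cite{Mac}. First I would invoke the one-variable branching identity
$$J_{\lambda}(x_1,\ldots,x_N,y;\theta)=\sum_{\mu\prec\lambda}\psi_{\lambda/\mu}(\theta)\,y^{|\lambda|-|\mu|}\,J_{\mu}(x_1,\ldots,x_N;\theta),$$
with $\psi_{\lambda/\mu}(\theta)$ the branching coefficient computed in \cite{Mac}. Comparing with the definition of $J_{\lambda/\mu}$ via $J_{\lambda}(X,Y)=\sum_{\mu}J_{\mu}(X)J_{\lambda/\mu}(Y)$ applied to $Y=(y)$, one reads off $J_{\lambda/\mu}(\mathfrak{a}^1)=\mathfrak{a}^{|\lambda|-|\mu|}\psi_{\lambda/\mu}(\theta)$ when $\mu\prec\lambda$, and zero otherwise. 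This produces the indicator and the $\mathfrak{a}^{|\lambda|-|\mu|}$ prefactor.

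Next I would rewrite Macdonald's explicit formula for $\psi_{\lambda/\mu}(\theta)$, which is a product of $(\theta$-hook$)$ ratios over those boxes of $\mu$ whose row is changed by the step $\mu\nearrow\lambda$. Using that for a box $(i,s)$ of $\mu$ the arm length is $\mu_i-s$ and the leg length is $\mu_s'-i=|\{j\ge i+1:\mu_j\ge s\}|$, one can reorganize the product by grouping boxes according to the pair of rows $(i,j)$ with $1\le i\le j\le k-1$ that "enclose" them, where $j$ indexes the row below which the leg contributions change. For each such pair the boxes that contribute run over the range $\lambda_{j+1}+1\le s\le \mu_j$, and the $\theta$-hook factors at those boxes telescope into the four Pochhammer symbols of length $\mu_j-\lambda_{j+1}$ displayed in (\ref{S2eqJ}). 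This converts the box product into the stated row-pair product.

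For the second assertion I would specialize (\ref{S2eqJ}) to $\lambda=\mu\sqcup(i,j)$, where $\mu_j-\lambda_{j+1}$ equals $1$ when the summation index equals $i$ (and in a few related cases) and equals $0$ otherwise, so most Pochhammer factors collapse to $1$. A direct simplification of the surviving linear factors, combined with the explicit constant in (\ref{S2eqDual}) relating $\widetilde{J}_{\lambda/\mu}$ to $J_{\lambda/\mu}$, then yields (\ref{JackSingle}); the hook factors coming from (\ref{S2eqDual}) cancel against all but the four factors displayed there, leaving a product over $r=1,\ldots,i-1$ as claimed.

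The main obstacle is the combinatorial bookkeeping in the second step: matching Macdonald's box-indexed hook product to the row-pair Pochhammer product requires carefully identifying which boxes contribute at each $(i,j)$ and verifying that the intermediate telescoping works out with the correct shifts of $\theta$ in numerator and denominator. Once this reorganization is in hand, both displayed identities follow by direct substitution.
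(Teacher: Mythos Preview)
The paper does not give its own proof of this proposition: it is stated among ``well-known explicit formulas'' and is simply cited from Propositions 2.2--2.4 of \cite{GS1} (and ultimately from Chapter VI of \cite{Mac}). So there is no in-paper argument to compare against.

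Your proposed route is the standard one and is correct in outline. Identifying $J_{\lambda/\mu}(\mathfrak{a}^1)$ with $\mathfrak{a}^{|\lambda|-|\mu|}\psi_{\lambda/\mu}(\theta)$ via the one-variable branching rule is exactly how these formulas are obtained, and Macdonald's expression for $\psi_{\lambda/\mu}$ (Chapter VI, (7.14$'$) in the Macdonald case, specialized to Jack) does reorganize into the displayed Pochhammer product by the telescoping you describe. The single-box specialization and the passage to $\widetilde{J}_{\lambda/\mu}$ via (\ref{S2eqDual}) are likewise routine. Your caveat about the bookkeeping is well placed: the only subtlety is tracking which boxes of $\mu$ have their leg length changed by the horizontal strip $\lambda/\mu$, and matching the resulting $\theta$-shifts to the four Pochhammer factors; once that is done the rest is substitution.
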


We also recall the following summation formula for Jack polynomials (this is Proposition 2.5 in \cite{GS1}).
\begin{proposition}\label{S2Prop4}
Let $\rho_1,\rho_2$ be two specializations such that the series $\sum_{k = 1}^\infty \frac{p_k(\rho_1)p_k(\rho_2)}{k}$ is absolutely convergent and define
$$H_\theta(\rho_1;\rho_2) = \exp \left( \theta \sum_{k = 1}^\infty \frac{p_k(\rho_1)p_k(\rho_2)}{k}\right).$$
Then we have
\begin{equation}\label{Cauchy}
\sum_{\lambda \in \mathbb{Y}} J_{\lambda}(\rho_1) \widetilde{J}_\lambda(\rho_2) = H_\theta(\rho_1; \rho_2),
\end{equation}
and more generally for any $\nu, \kappa \in \mathbb{Y}$ 
\begin{equation}\label{SkewCauchy}
\sum_{\lambda \in \mathbb{Y}} J_{\lambda/\nu}(\rho_1) \widetilde{J}_{\lambda/\kappa}(\rho_2) = H_\theta(\rho_1; \rho_2)\sum_{\mu \in \mathbb{Y}} J_{\kappa/\mu}(\rho_1) \widetilde{J}_{\nu/\mu}(\rho_2).
\end{equation}
Note that the sum on the right is in fact finite and so well-defined. Part of the statement is that the left side is actually absolutely convergent and numerically equals the right side.
\end{proposition}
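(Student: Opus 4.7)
The plan is to realize $H_\theta$ as the reproducing kernel of a natural inner product under which $\{J_\lambda\}$ and $\{\widetilde{J}_\lambda\}$ are dual bases, thereby obtaining (\ref{Cauchy}), and then deduce (\ref{SkewCauchy}) from (\ref{Cauchy}) by applying it to unions of alphabets. Following Chapter VI of \cite{Mac}, I would introduce the $\theta$-deformed Hall inner product $\langle \cdot, \cdot \rangle_\theta$ on $\Lambda$ determined by $\langle p_\lambda, p_\mu \rangle_\theta = \delta_{\lambda\mu} z_\lambda \theta^{-\ell(\lambda)}$, where $z_\lambda = \prod_i i^{m_i(\lambda)} m_i(\lambda)!$. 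Macdonald's characterization of the Jack polynomials shows that $\{J_\lambda\}$ is an orthogonal basis for $\langle \cdot, \cdot \rangle_\theta$ with squared norm equal to the product $\prod_{\square \in \lambda} (a(\square)+\theta l(\square)+\theta)/(a(\square)+\theta l(\square)+1)$, which by (\ref{S2eqDual}) is precisely the constant linking $J_\lambda$ to $\widetilde{J}_\lambda$. Consequently $\langle J_\lambda, \widetilde{J}_\mu\rangle_\theta = \delta_{\lambda\mu}$, so $\{J_\lambda\}$ and $\{\widetilde{J}_\lambda\}$ are biorthogonal dual bases.

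Expanding the exponential in the definition of $H_\theta$ via multiplicativity of $p_k$ across disjoint unions of alphabets rewrites it in the power-sum basis as $\sum_\lambda (\theta^{\ell(\lambda)}/z_\lambda)\, p_\lambda(\rho_1) p_\lambda(\rho_2)$, which displays $H_\theta$ as the reproducing kernel of $\langle \cdot, \cdot \rangle_\theta$. Re-expanding in the dual basis pair $(J_\lambda, \widetilde{J}_\lambda)$ yields (\ref{Cauchy}). For (\ref{SkewCauchy}) I would proceed in two steps. First, apply (\ref{Cauchy}) with $\rho_1$ replaced by $(\rho_1, \sigma_1)$ for an auxiliary free alphabet $\sigma_1$, expand using multiplicativity of $p_k$ and (\ref{skewJ}), and match the coefficient of $J_\nu(\sigma_1)$ on both sides (valid since the $J_\nu(\sigma_1)$ are linearly independent for $\sigma_1$ a sufficiently large alphabet) to extract the special case
\begin{equation*}
\sum_\lambda J_{\lambda/\nu}(\rho_1) \widetilde{J}_\lambda(\rho_2) = H_\theta(\rho_1;\rho_2)\, \widetilde{J}_\nu(\rho_2).
\end{equation*}
Second, apply this identity with $\rho_2$ replaced by $(\rho_2, \sigma_2)$ for another auxiliary alphabet, expand both sides using multiplicativity, (\ref{skewJ2}), and (\ref{Cauchy}) for $H_\theta(\rho_1;\sigma_2)$, and match coefficients of $\widetilde{J}_\kappa(\sigma_2)$. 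The remaining algebraic input is the duality identity $J_{\kappa/\mu} = \sum_a e^\kappa_{a,\mu} J_a$, where the $e^\kappa_{a,\mu}$ are the structure constants in $\widetilde{J}_a \widetilde{J}_\mu = \sum_\kappa e^\kappa_{a,\mu} \widetilde{J}_\kappa$; this holds because both $\langle J_{\kappa/\mu}, \widetilde{J}_a\rangle_\theta$ and $e^\kappa_{a,\mu}$ equal $\langle J_\kappa, \widetilde{J}_a \widetilde{J}_\mu\rangle_\theta$ by the coproduct/multiplication adjointness of the Hopf structure on $\Lambda$. Performing the coefficient matching then produces (\ref{SkewCauchy}).

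The main obstacle is not the algebra but the transition from formal identities in $\Lambda$ to absolutely convergent numerical identities for generic specializations $\rho_1, \rho_2$. The hypothesis that $\sum_k p_k(\rho_1) p_k(\rho_2)/k$ converges absolutely is precisely what ensures the exponential series for $H_\theta$ converges absolutely and agrees with its power-sum expansion. On the Jack-positive specializations from Proposition \ref{propClass}, all evaluations $J_\lambda(\rho_i)$ and $\widetilde{J}_\lambda(\rho_i)$ are non-negative by (\ref{S2eq1}), so Tonelli's theorem justifies the reordering of double and triple sums that occurs throughout the argument, and non-negative domination by the power-sum series of $H_\theta$ establishes absolute convergence of the rearranged expansions. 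The general specialization case is reduced to the positive one by bounding in absolute value using the classification of Proposition \ref{propClass}. A final consistency check is that the outer sum in $\mu$ on the right-hand side of (\ref{SkewCauchy}) is automatically finite, since $J_{\kappa/\mu}$ and $\widetilde{J}_{\nu/\mu}$ vanish unless $\mu \subset \kappa \cap \nu$, in accordance with the remark made in the statement of the proposition.
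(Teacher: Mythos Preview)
The paper does not supply its own proof of this proposition: it is stated with the parenthetical remark ``this is Proposition 2.5 in \cite{GS1}'' and simply recorded as background. So there is nothing to compare against on the paper's side.

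Your outline is the standard route from Chapter VI of \cite{Mac}: establish biorthogonality of $\{J_\lambda\}$ and $\{\widetilde{J}_\lambda\}$ under $\langle\cdot,\cdot\rangle_\theta$, recognize $H_\theta$ as the reproducing kernel, and then obtain the skew identity by the auxiliary-alphabet coefficient-extraction trick. This is correct and is essentially how the identity is proved in Macdonald and in \cite{GS1}.

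One point to tighten is the final analytic step. You write that the ``general specialization case is reduced to the positive one by bounding in absolute value using the classification of Proposition \ref{propClass}.'' But Proposition \ref{propClass} classifies only the Jack-\emph{positive} specializations; a general algebra homomorphism $\rho:\Lambda\to\mathbb{C}$ need not arise from any $(\alpha,\beta,\gamma)$ triple, so you cannot dominate it by a positive one in this way. The clean fix is to work directly with the power-sum expansion: the hypothesis guarantees that $\sum_\lambda \theta^{\ell(\lambda)} z_\lambda^{-1}\, |p_\lambda(\rho_1)p_\lambda(\rho_2)|$ converges (this is just the expansion of $\exp(\theta\sum_k |p_k(\rho_1)p_k(\rho_2)|/k)$), and since the change of basis from $\{p_\lambda\}$ to $\{J_\lambda\}$ is finite and triangular in each graded piece, absolute convergence of the Jack expansion follows degree by degree. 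In practice the paper only ever applies the proposition to Jack-positive specializations (namely $1^N$ and $\mathfrak{r}_s$), so this subtlety is immaterial for the rest of the argument.
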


\subsection{Jack measures and dynamics}\label{Section2.2}
We start with the definition of Jack probability measures, based on (\ref{Cauchy}).
\begin{definition}\label{defJM}
Let $\rho_1$ and $\rho_2$ be two Jack-positive specializations such that the series $\sum_{k = 1}^\infty \frac{p_k(\rho_1)p_k(\rho_2)}{k}$ is absolutely convergent. 
The Jack probability measure $\mathcal{J}_{\rho_1,\rho_2}(\lambda)$ on $\mathbb{Y}$ is defined through
\begin{equation}
\mathcal{J}_{\rho_1,\rho_2}(\lambda) =\frac{J_{\lambda}(\rho_1)\widetilde{J}_{\lambda}(\rho_2)}{H_{\theta}(\rho_1;\rho_2)},
\end{equation}
where the normalization constant is given by
$$H_{\theta}(\rho_1;\rho_2)=\exp \left(\sum_{k=1}^{\infty}\frac{\theta}{k}p_k(\rho_1)p_k(\rho_2)\right).$$
\end{definition}

\begin{remark}
The construction of probability measures via specializations of symmetric polynomials was first proposed by Okounkov in the context of Schur measures \cite{Ok01}. Since that seminal work, the framework has been extended to more general polynomials leading, in particular, to the Macdonald measures of \cite{BorCor}.
\end{remark}

As an immediate corollary of Propositions \ref{S2Prop2} we obtain the following result (this is Proposition 2.8 in \cite{GS1}).
\begin{proposition}\label{propJM} Let $1^N$ and $\mathfrak{r}_s$ be as in Proposition \ref{S2Prop2}. Then $\mathcal{J}_{1^N;\tau_{s}}(\lambda)=0$ unless $\lambda \in \mathbb{Y}^N$, in which case we have
$$\mathcal{J}_{1^N;\mathfrak{r}_{s}}(\lambda)=e^{-\theta{sN}}s^{|\lambda|}\theta^{|\lambda|}\prod_{\square\in\lambda}\frac{N\theta+a'(\square)-\theta{l'(\square)}}{(a(\square)+\theta{l(\square)}+\theta)(a(\square)+\theta l(\square)+1)}.$$
\end{proposition}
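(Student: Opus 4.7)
The plan is to prove this as a direct calculation using the formulas already assembled in the section. By Definition \ref{defJM} we have
$$\mathcal{J}_{1^N;\mathfrak{r}_s}(\lambda) = \frac{J_\lambda(1^N)\, \widetilde{J}_\lambda(\mathfrak{r}_s)}{H_\theta(1^N;\mathfrak{r}_s)},$$
so I would evaluate each of the three pieces in turn and combine them.

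For the numerator, Proposition \ref{S2Prop2} with $\mathfrak{a}=1$ gives
$J_\lambda(1^N) = \mathbf{1}_{\{\ell(\lambda)\leq N\}} \prod_{\square\in\lambda} \frac{N\theta + a'(\square) - \theta l'(\square)}{a(\square) + \theta l(\square) + \theta},$
which already produces the vanishing statement $\mathcal{J}_{1^N;\mathfrak{r}_s}(\lambda)=0$ for $\ell(\lambda) > N$ and provides the numerator of the displayed product. Next, combining the formula for $J_\lambda(\mathfrak{r}_s)$ from Proposition \ref{S2Prop2} with the duality relation (\ref{S2eqDual}), the product $\prod_\square (a(\square)+\theta l(\square)+\theta)$ cancels and we obtain
$$\widetilde{J}_\lambda(\mathfrak{r}_s) = s^{|\lambda|}\theta^{|\lambda|} \prod_{\square\in\lambda} \frac{1}{a(\square) + \theta l(\square) + 1},$$
which supplies the factor $s^{|\lambda|}\theta^{|\lambda|}$ and the remaining denominator factor in the target formula.

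For the normalization $H_\theta(1^N;\mathfrak{r}_s)$, I would use the explicit evaluation of Newton power sums from Proposition \ref{propClass}: for the specialization $1^N$ (i.e.\ $\alpha_1=\cdots=\alpha_N=1$, all other parameters zero) one has $p_k(1^N)=N$ for all $k\geq 1$, while for $\mathfrak{r}_s$ (i.e.\ $\gamma=s$, all other parameters zero) one has $p_1(\mathfrak{r}_s) = s$ and $p_k(\mathfrak{r}_s) = 0$ for $k\geq 2$. Therefore only the $k=1$ term in the series defining $H_\theta$ survives, giving $H_\theta(1^N;\mathfrak{r}_s) = \exp(\theta s N)$, which matches the prefactor $e^{-\theta sN}$ after inversion.

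Putting the three pieces together yields exactly the stated formula, so no genuine obstacle arises; the entire content is algebraic bookkeeping, with the only subtle point being to notice that the $(a(\square)+\theta l(\square)+\theta)$ factors from $J_\lambda(\mathfrak{r}_s)$ and from the $J\to\widetilde{J}$ conversion cancel to leave a single $(a(\square)+\theta l(\square)+1)$ denominator, which then pairs with the $(a(\square)+\theta l(\square)+\theta)$ denominator coming from $J_\lambda(1^N)$ to produce the product of two distinct denominator factors in the final display.
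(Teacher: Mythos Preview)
Your proposal is correct and matches the paper's approach exactly: the paper presents Proposition~\ref{propJM} as ``an immediate corollary of Proposition~\ref{S2Prop2}'' without giving further details, and your write-up simply spells out this immediate corollary by combining (\ref{S2eq1}), the duality relation (\ref{S2eqDual}), and the computation $H_\theta(1^N;\mathfrak{r}_s)=e^{\theta sN}$.
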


In what follows we will construct a stochastic dynamics on $\mathbb{Y}^N$. Our discussion will follow to large extent Section 2.3 in \cite{GS1}; however, we remark that similar constructions have been made for Schur, $q$-Whittaker and Macdonald polynomials in \cite{Bor11, BorCor, BorFer, BorGorIP}.
\begin{definition}\label{S2union}
Given two specializations $\rho$ and $\rho'$, define their union $(\rho, \rho')$ through the formulas
$$p_k(\rho, \rho') = p_k(\rho) + p_k(\rho'), \mbox{ for $k\geq1$},$$
where $p_k, k\geq 1$ are the Newton power sums as before.
\end{definition}
Let $\rho$ and $\rho'$ be two Jack-positive specializations such that $H_\theta(\rho;\rho') < \infty$. Define the matrices $p_{\lambda \rightarrow \mu}^{\uparrow}$ and $p_{\lambda \rightarrow \mu}^{\downarrow}$ with rows and columns indexed by Young diagrams as follows:
\begin{equation}
\begin{split}
&p_{\lambda \rightarrow \mu}^{\uparrow}(\rho; \rho') = \frac{1}{H_\theta(\rho; \rho')} \frac{J_{\mu}(\rho)}{J_\lambda(\rho)} \widetilde{J}_{\mu / \lambda}(\rho'), \hspace{2mm} \lambda,\mu \in \mathbb{Y}, \hspace{2mm} J_\lambda(\rho) > 0,\\
&p_{\lambda \rightarrow \mu}^{\downarrow}(\rho; \rho') =\frac{J_\mu(\rho)}{J_\lambda(\rho,\rho')} J_{\lambda/ \mu}(\rho'), \hspace{2mm} \lambda,\mu \in \mathbb{Y}, \hspace{2mm} J_\lambda(\rho, \rho') > 0.
\end{split}
\end{equation}
The following results follow from (\ref{skewJ2}) and (\ref{SkewCauchy}) (see also \cite{Bor11, BorCor, BorGorIP} for similar results in the case of Schur, $q$-Whittaker and Macdonald polynomials).
\begin{proposition}\label{transProp}
The matrices $p_{\lambda \rightarrow \mu}^{\uparrow}$ and $p_{\lambda \rightarrow \mu}^{\downarrow}$ are stochastic, i.e. the elements are non-negative and for $\lambda \in \mathbb{Y}$ we have
$$\sum_{\mu \in \mathbb{Y}}p_{\lambda \rightarrow \mu}^{\uparrow}(\rho, \rho') = 1 = \sum_{\mu \in \mathbb{Y}}p_{\lambda \rightarrow \mu}^{\downarrow}(\rho; \rho').$$
If $\mu \in \mathbb{Y}$ and $\rho_1,\rho_2, \rho_3$ are Jack-positive specializations we have
$$\sum_{\lambda \in \mathbb{Y}: \mathcal{J}_{\rho_1; \rho_2}(\lambda) > 0} \mathcal{J}_{\rho_1; \rho_2}(\lambda) p_{\lambda \rightarrow \mu}^{\uparrow}(\rho_2, \rho_3) = \mathcal{J}_{\rho_1,\rho_3; \rho_2}(\mu)$$
$$\sum_{\lambda \in \mathbb{Y}: \mathcal{J}_{\rho_1; \rho_2,\rho_3}(\lambda) > 0} \mathcal{J}_{\rho_1; \rho_2, \rho_3}(\lambda) p_{\lambda \rightarrow \mu}^{\downarrow}(\rho_1; \rho_3) = \mathcal{J}_{\rho_1; \rho_2}(\mu).$$
The matrices $p_{\lambda \rightarrow \mu}^{\uparrow}$ and $p_{\lambda \rightarrow \mu}^{\downarrow}$ satisfy the following commutation relation
$$p_{\lambda \rightarrow \mu}^{\uparrow}(\rho_1, \rho_2; \rho_3)  p_{\lambda \rightarrow \mu}^{\downarrow}(\rho_1; \rho_2)  = p_{\lambda \rightarrow \mu}^{\downarrow}(\rho_1; \rho_2) p_{\lambda \rightarrow \mu}^{\uparrow}(\rho_1, \rho_3) .$$
\end{proposition}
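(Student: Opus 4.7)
Each of the three assertions reduces to bookkeeping with the Cauchy identities (\ref{Cauchy})--(\ref{SkewCauchy}) and the branching rules (\ref{skewJ})--(\ref{skewJ2}), together with the multiplicativity $H_\theta(\rho_a, \rho_b; \rho_c) = H_\theta(\rho_a; \rho_c) H_\theta(\rho_b; \rho_c)$, which follows immediately from Definition \ref{S2union} and the additivity of the Newton power sums under a union of specializations. Positivity of the matrix entries is automatic from Jack-positivity of the involved specializations, so only the normalization, the two marginal identities, and the commutation relation need genuine computation.

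For stochasticity of $p^{\downarrow}$, summing the definition over $\mu$ and invoking (\ref{skewJ}) gives $\sum_\mu J_\mu(\rho) J_{\lambda/\mu}(\rho') = J_\lambda(\rho,\rho')$, canceling the denominator. For $p^{\uparrow}$, I would specialize (\ref{SkewCauchy}) to $\nu = \varnothing$, $\kappa = \lambda$; the inner sum on the right collapses to a single term (since $\widetilde{J}_{\varnothing/\mu'}$ vanishes unless $\mu' = \varnothing$), yielding $\sum_\mu J_\mu(\rho) \widetilde{J}_{\mu/\lambda}(\rho') = H_\theta(\rho;\rho') J_\lambda(\rho)$, and again stochasticity follows.

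For the marginal identities I would substitute the definitions and isolate the $\lambda$-dependence. The key observation is that the conversion factor (\ref{S2eqDual}) between $J_\lambda$ and $\widetilde{J}_\lambda$ is a $\lambda$-dependent but \emph{specialization-independent} constant; this allows me to rewrite $J_\lambda(\rho_1)\,\widetilde{J}_\lambda(\rho_2)/J_\lambda(\rho_2)$ as $\widetilde{J}_\lambda(\rho_1)$, after which the sum becomes $\sum_\lambda \widetilde{J}_\lambda(\rho_1)\, \widetilde{J}_{\mu/\lambda}(\rho_3) = \widetilde{J}_\mu(\rho_1, \rho_3)$ by the dual of (\ref{skewJ}). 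Combining with the factorization of $H_\theta$ and once again using specialization-independence of $C_\mu$ to swap a $J$ for a $\widetilde{J}$, one recognizes the right-hand side as $\mathcal{J}_{\rho_1,\rho_3;\rho_2}(\mu)$. The identity for $p^{\downarrow}$ is analogous: (\ref{SkewCauchy}) with $\nu = \mu$, $\kappa = \varnothing$ evaluates the inner sum as $H_\theta(\rho_3;\rho_2)\,\widetilde{J}_\mu(\rho_2)$, and the $H_\theta$-factors collapse to leave $\mathcal{J}_{\rho_1;\rho_2}(\mu)$.

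The commutation relation is the most intricate: I would compose each side as a matrix product by summing over an intermediate partition, substitute the definitions, and then apply (\ref{skewJ2}) (for both $J$ and $\widetilde{J}$) to bring both products to the same symmetric expression in $\lambda, \mu$ and the three specializations. This is the main obstacle, not through any new conceptual input, but because two independent intermediate summations must be tracked simultaneously and the $H_\theta$-normalizations matched. All of the needed identities are already available in Proposition \ref{S2Prop4}, so once the indexing is set up carefully the computation goes through.
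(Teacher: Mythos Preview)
Your proposal is correct and follows exactly the approach the paper indicates: the paper does not give a detailed proof but simply states that the results follow from (\ref{skewJ2}) and (\ref{SkewCauchy}), referring to \cite{Bor11, BorCor, BorGorIP} for analogous computations in the Schur, $q$-Whittaker, and Macdonald settings. Your write-up in fact supplies more detail than the paper does---in particular, the explicit use of the specialization-independent ratio $\widetilde{J}_\lambda/J_\lambda$ to transfer normalizations is exactly the mechanism that makes the marginal identities go through, and your outline of the commutation relation via a double summation and (\ref{skewJ2}) is the standard route.
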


We note that $p_{\lambda \rightarrow \mu}^{\uparrow}(1^N; \mathfrak{r}_t)$ for $t\geq 0$ defines a {\em transition function} on $\mathbb{Y}^N$ (see Chapter 2 in \cite{Lig}). The stochasticity is a consequence of Proposition \ref{transProp}, while the Chapman-Kolmogorov equations follow from (\ref{skewJ2}).  The condition $\lim_{t \rightarrow 0^+}p_{\lambda \rightarrow \lambda}^{\uparrow}(1^N; \mathfrak{r}_t) = p_{\lambda \rightarrow \lambda}^{\uparrow}(1^N; \mathfrak{r}_0)  = 1$ is a consequence of the fact that $\widetilde{J}_{\mu / \lambda}(\mathfrak{r}_t) = 0$ unless $\lambda \subset \mu$ in which case we have
$$\widetilde{J}_{\mu / \lambda}(\mathfrak{r}_t) = \begin{cases}1 &\mbox{ if $\lambda = \mu$, }\\  O(t^{|\mu| - |\lambda|})  &\mbox{ otherwise.}\end{cases} .$$
In particular, we conclude that there is a (unique) Markov chain with the above transition function for a given initial condition. We call the latter process, started from $\varnothing$ (the element $\lambda \in \mathbb{Y}^N$ such that $\lambda_1 = \cdots = \lambda_N = 0$), $X_{disc}^N(s),s\geq 0$ after \cite{GS1} and record some of its properties in a sequence of propositions, whose proof can be found in Section 2.3 of the same paper.
\begin{proposition}\label{JumpRates} The jump rates of the Markov chain $X^N_{disc}$ on $\mathbb{Y}^N$ are given by
\begin{equation}\label{eqJR}
q_{\lambda \rightarrow \mu} = \begin{cases} \cfrac{J_\mu(1^N)}{J_\lambda(1^N)} \hspace{1mm} \widetilde{J}_{\mu / \lambda}(\mathfrak{r}_1), \hspace{2mm} &\mu = \lambda \sqcup \square, \\
- \sum\limits_{ \nu = \lambda \sqcup \square} q_{\lambda \rightarrow \nu}, &\mu = \lambda, \\ 0 &\mbox{otherwise.}  \end{cases}
\end{equation}
Explicitly, for $\mu = \lambda \sqcup (i,j)$ we have that $q_{\lambda \rightarrow \mu}$ equals
\begin{equation}
\begin{split}
 &\cfrac{ \prod\limits_{\square \in \mu} \cfrac{N\theta + a'(\square) - \theta l'(\square)}{a(\square;\mu) + \theta l(\square; \mu) + \theta}}{\prod\limits_{\square \in \lambda} \cfrac{N\theta + a'(\square) - \theta l'(\square)}{a(\square;\lambda) + \theta l(\square;\lambda) + \theta}} \hspace{1mm} \theta \cdot \prod_{k = 1}^{i-1} \frac{a(k,j;\lambda) + \theta(i-k+1)}{a(k,j;\lambda) + \theta(i-k)} \frac{a(k,j;\lambda) + 1 +  \theta(i-k-1)}{a(k,j;\lambda) + 1 +  \theta(i-l)}.
\end{split}
\end{equation}
\end{proposition}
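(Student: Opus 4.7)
The plan is to extract the rate matrix of $X^N_{disc}$ as the $t$-derivative at $t = 0^+$ of its transition function $p^{\uparrow}_{\lambda \to \mu}(1^N; \mathfrak{r}_t)$. Since $\mathbb{Y}^N$ is countable and the transition function is real-analytic in $t$, this is the standard identification of the $Q$-matrix of a pure-jump chain, and the identity $q_{\lambda \to \lambda} = -\sum_{\nu = \lambda \sqcup \square} q_{\lambda \to \nu}$ will follow automatically from stochasticity once the off-diagonal rates have been computed.

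First I would analyze each factor of
\[
p^{\uparrow}_{\lambda \to \mu}(1^N; \mathfrak{r}_t) = \frac{1}{H_\theta(1^N; \mathfrak{r}_t)} \cdot \frac{J_\mu(1^N)}{J_\lambda(1^N)} \cdot \widetilde{J}_{\mu/\lambda}(\mathfrak{r}_t)
\]
as $t \to 0^+$. Because $p_k(\mathfrak{r}_t) = t \cdot \mathbf{1}_{k = 1}$, Proposition \ref{S2Prop4} gives $H_\theta(1^N; \mathfrak{r}_t) = \exp(\theta N t)$, whose reciprocal equals $1 - \theta N t + O(t^2)$. Moreover $\widetilde{J}_{\mu/\lambda}$ is homogeneous of degree $d := |\mu| - |\lambda|$ and its expansion in the Newton power-sum basis involves only $p_\kappa$ with $|\kappa| = d$; under $\mathfrak{r}_t$ only the term $\kappa = (1^d)$ survives, so $\widetilde{J}_{\mu/\lambda}(\mathfrak{r}_t) = t^d \widetilde{J}_{\mu/\lambda}(\mathfrak{r}_1)$, which vanishes unless $\lambda \subseteq \mu$ (cf.\ the remark following (\ref{skewJ})). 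Combining these expansions, $p^{\uparrow}_{\lambda \to \mu}(1^N; \mathfrak{r}_t)$ equals $1 - \theta N t + O(t^2)$ when $\mu = \lambda$, equals $\frac{J_\mu(1^N)}{J_\lambda(1^N)} \widetilde{J}_{\mu/\lambda}(\mathfrak{r}_1) \cdot t + O(t^2)$ when $\mu = \lambda \sqcup \square$, and is $O(t^2)$ when $|\mu| - |\lambda| \geq 2$. Differentiating at $t = 0$ produces (\ref{eqJR}).

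To obtain the explicit product formula I would substitute the Plancherel evaluations of Proposition \ref{S2Prop2} for the ratio $J_\mu(1^N)/J_\lambda(1^N)$ and apply (\ref{JackSingle}) from Proposition \ref{S2Prop3} to compute $\widetilde{J}_{\mu/\lambda}(\mathfrak{r}_1)$. The one subtlety is that (\ref{JackSingle}) is formulated in terms of the single-variable specialization $\mathfrak{a}^1$ rather than $\mathfrak{r}_1$; however, when $\mu = \lambda \sqcup (i,j)$ the polynomial $\widetilde{J}_{\mu/\lambda}$ has total degree one, hence depends only on $p_1$, and since $p_1(\mathfrak{r}_1) = 1 = p_1(1^1)$ we get $\widetilde{J}_{\mu/\lambda}(\mathfrak{r}_1) = \widetilde{J}_{\mu/\lambda}(\mathfrak{a}^1)\big|_{\mathfrak{a}=1}$. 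Feeding this into (\ref{JackSingle}) and using the formula from Proposition \ref{S2Prop2} to expand the ratio $J_\mu(1^N)/J_\lambda(1^N)$ yields the stated product formula.

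The main technical observation is therefore the reduction $\widetilde{J}_{\mu/\lambda}(\mathfrak{r}_1) = \widetilde{J}_{\mu/\lambda}(\mathfrak{a}^1)\big|_{\mathfrak{a}=1}$ for single-box skew shapes, which is what bridges Propositions \ref{S2Prop2} and \ref{S2Prop3}; everything else is a routine short-time expansion of the transition kernel together with the standard passage from a transition semigroup to its infinitesimal generator on a countable state space.
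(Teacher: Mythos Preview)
Your argument is correct and is exactly the approach implicit in the paper. The paper does not actually spell out a proof of this proposition: it introduces the transition function $p^{\uparrow}_{\lambda\to\mu}(1^N;\mathfrak r_t)$, records that $\widetilde J_{\mu/\lambda}(\mathfrak r_t)=O(t^{|\mu|-|\lambda|})$, and then defers the proofs of Propositions~\ref{JumpRates}--\ref{poisson} to Section~2.3 of \cite{GS1}. Your short-time expansion of $p^{\uparrow}_{\lambda\to\mu}(1^N;\mathfrak r_t)$ together with the observation $\widetilde J_{\mu/\lambda}(\mathfrak r_1)=\widetilde J_{\mu/\lambda}(1^1)$ for single-box skew shapes is precisely the computation one would carry out there, so your proposal matches the intended route.
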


\begin{proposition}\label{poisson}
The process $\left| X_{disc}^N\right|: = \sum_{i = 1}^N \left( X_{disc}^N\right)_i$ is a Poisson process with intensity $N\theta$.
\end{proposition}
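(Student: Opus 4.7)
The plan is to reduce the claim to a single algebraic identity: that the total jump rate out of any $\lambda \in \mathbb{Y}^N$ equals $N\theta$. Indeed, by Proposition \ref{JumpRates}, every jump of $X_{disc}^N$ takes $\lambda$ to some $\mu = \lambda \sqcup \square$, so $|X_{disc}^N|$ is a counting process that increments by exactly $1$ at each jump of $X_{disc}^N$. If the total exit rate $\sum_{\mu = \lambda \sqcup \square} q_{\lambda \to \mu}$ equals the common constant $N\theta$ for every $\lambda$, then the holding times of $X_{disc}^N$ are i.i.d. exponentials with rate $N\theta$ by the strong Markov property, and hence $|X_{disc}^N|$ is a Poisson process with intensity $N\theta$.

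To establish the identity $\sum_{\mu = \lambda \sqcup \square} \tfrac{J_\mu(1^N)}{J_\lambda(1^N)} \widetilde{J}_{\mu/\lambda}(\mathfrak{r}_1) = N\theta$, I would apply the skew Cauchy identity (\ref{SkewCauchy}) with $\rho_1 = 1^N$, $\rho_2 = \mathfrak{r}_t$, $\nu = \varnothing$ and $\kappa = \lambda$. Since $\widetilde{J}_{\varnothing/\sigma}(\mathfrak{r}_t)$ vanishes unless $\sigma = \varnothing$ (in which case it equals $1$), and $J_{\lambda/\varnothing}(1^N) = J_\lambda(1^N)$, the right-hand side of (\ref{SkewCauchy}) collapses to $H_\theta(1^N;\mathfrak{r}_t)\, J_\lambda(1^N)$. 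From Proposition \ref{propClass} one has $p_1(\mathfrak{r}_t) = t$ and $p_k(\mathfrak{r}_t) = 0$ for $k \geq 2$, so $H_\theta(1^N;\mathfrak{r}_t) = \exp(N\theta t)$. Consequently,
$$\sum_{\mu \supseteq \lambda} J_\mu(1^N)\, \widetilde{J}_{\mu/\lambda}(\mathfrak{r}_t) = e^{N\theta t}\, J_\lambda(1^N).$$

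Finally, by the homogeneity of skew Jack polynomials we have $\widetilde{J}_{\mu/\lambda}(\mathfrak{r}_t) = t^{|\mu|-|\lambda|}\, \widetilde{J}_{\mu/\lambda}(\mathfrak{r}_1)$, so after dividing by $J_\lambda(1^N)$ the left-hand side becomes a power series in $t$ whose coefficient at $t^k$ equals $\sum_{\mu: |\mu|-|\lambda|=k} \tfrac{J_\mu(1^N)}{J_\lambda(1^N)} \widetilde{J}_{\mu/\lambda}(\mathfrak{r}_1)$. Matching this against the Taylor expansion of $e^{N\theta t}$ and extracting the coefficient of $t^1$ yields the desired identity. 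The only subtle technical point is justifying term-by-term extraction of the coefficient, but this is immediate since the Cauchy identity guarantees absolute convergence of the series on both sides as a function of $t$ (by Proposition \ref{S2Prop4}), turning both sides into entire functions of $t$ whose Taylor coefficients necessarily agree. No heavy machinery is required; the main (and essentially only) ingredient is the skew Cauchy summation, which encodes precisely the balance needed to make the total exit rate from $\lambda$ independent of $\lambda$.
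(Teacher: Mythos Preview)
Your proof is correct. The paper does not prove this proposition itself but defers to Section 2.3 of \cite{GS1}; your argument is self-contained using only the machinery already developed in Section~\ref{Section2} (the skew Cauchy identity and the explicit form of the Plancherel specialization). Note that the key identity you derive is equivalent to the stochasticity of $p^\uparrow_{\lambda\to\mu}(1^N;\mathfrak r_t)$ from Proposition~\ref{transProp}, so one could shortcut slightly: starting from $\sum_\mu p^\uparrow_{\lambda\to\mu}(1^N;\mathfrak r_t)=1$, multiplying through by $H_\theta(1^N;\mathfrak r_t)=e^{N\theta t}$, and differentiating at $t=0$ (or equivalently matching the $t^1$ coefficient as you do) gives the constant-rate identity directly without reinvoking the skew Cauchy formula.
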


\begin{proposition}For any fixed $s > 0$, the law of $X^N_{disc}(s)$ is given by $\mathcal{J}_{1^N; \mathfrak{r}_s}$, which was computed in Proposition \ref{propJM}.
\end{proposition}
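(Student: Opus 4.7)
The plan is to observe that this proposition is essentially a one-line unpacking of Definition \ref{defJM} together with the construction of $X^N_{disc}$ as the Markov chain with transition function $p^{\uparrow}_{\lambda\to\mu}(1^N;\mathfrak{r}_t)$ started from $\varnothing$.

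By construction, for any fixed $s>0$ the law of $X^N_{disc}(s)$ is given by
$$\mathbb{P}\bigl(X^N_{disc}(s)=\mu\bigr)=p^{\uparrow}_{\varnothing\to\mu}(1^N;\mathfrak{r}_s).$$
I would then insert the defining formula for $p^{\uparrow}$, use that $J_{\varnothing}\equiv 1$ and that $\widetilde{J}_{\mu/\varnothing}=\widetilde{J}_{\mu}$, which yields
$$p^{\uparrow}_{\varnothing\to\mu}(1^N;\mathfrak{r}_s)=\frac{1}{H_\theta(1^N;\mathfrak{r}_s)}\cdot\frac{J_\mu(1^N)}{J_\varnothing(1^N)}\cdot\widetilde{J}_{\mu/\varnothing}(\mathfrak{r}_s)=\frac{J_\mu(1^N)\,\widetilde{J}_\mu(\mathfrak{r}_s)}{H_\theta(1^N;\mathfrak{r}_s)}.$$
The right-hand side is exactly $\mathcal{J}_{1^N;\mathfrak{r}_s}(\mu)$ by Definition \ref{defJM}, and combining with Proposition \ref{propJM} gives the explicit product expression.

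A slightly slicker alternative, which emphasizes the structural role of Proposition \ref{transProp}, is to take $\rho_1$ to be the empty specialization (so that $\mathcal{J}_{\rho_1;1^N}(\lambda)=\delta_{\lambda,\varnothing}$ since $J_\lambda$ vanishes on the empty specialization unless $\lambda=\varnothing$, and $H_\theta(\mathrm{empty};1^N)=1$) and apply the intertwining identity
$$\sum_{\lambda}\mathcal{J}_{\rho_1;1^N}(\lambda)\,p^{\uparrow}_{\lambda\to\mu}(1^N;\mathfrak{r}_s)=\mathcal{J}_{\rho_1,\mathfrak{r}_s;1^N}(\mu)=\mathcal{J}_{1^N;\mathfrak{r}_s}(\mu).$$
The left-hand side collapses to $p^{\uparrow}_{\varnothing\to\mu}(1^N;\mathfrak{r}_s)$, giving the same conclusion without recomputing $p^{\uparrow}_{\varnothing\to\mu}$ by hand.

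There is no real obstacle: all the substantive work, namely that $p^{\uparrow}(1^N;\mathfrak{r}_t)$ is a bona fide Markov transition semigroup satisfying Chapman--Kolmogorov (from \eqref{skewJ2}), and that the infinitesimal description agrees with the jump rates in Proposition \ref{JumpRates}, has already been carried out before the statement. The proposition is therefore best presented as a direct corollary of the construction of $X^N_{disc}$, with the identification of $p^{\uparrow}_{\varnothing\to\mu}$ with $\mathcal{J}_{1^N;\mathfrak{r}_s}(\mu)$ being a one-line verification from the definitions.
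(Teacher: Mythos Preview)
Your proposal is correct. The paper does not actually supply its own proof of this proposition; it simply records it among a list of properties of $X^N_{disc}$ and refers the reader to Section 2.3 of \cite{GS1}. Your direct computation of $p^{\uparrow}_{\varnothing\to\mu}(1^N;\mathfrak{r}_s)$ from the definitions is exactly the natural argument, and your alternative via the intertwining identity of Proposition \ref{transProp} (with the empty specialization playing the role of $\rho_1$, using the symmetry $\mathcal{J}_{\rho;\rho'}=\mathcal{J}_{\rho';\rho}$) is also valid and is in fact how such statements are typically packaged in the Macdonald/Schur process literature.
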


\subsection{Multilevel Jack measures and dynamics}

For $n,N \in \mathbb{N}$, $n \leq N$ denote by $\mathbb{GT}^N_n$ the collection of sequences $(\lambda^n \prec \cdots \prec \lambda^N)$ such that $\lambda^i \in \mathbb{Y}^i$ for $i = n,n+1,...,N$. A natural way to generalize $\mathcal{J}_{1^N;\tau_{s}}$ to a measure on $\mathbb{GT}^N_n$ is given in the following definition.
\begin{definition} For $n,N \in \mathbb{N}$, $n \leq N$ and $s \geq 0$ we define the multilevel Jack probability measure $\mathcal{J}^{multi}_{s;n,N}$ on $\mathbb{GT}^N_n$ by
\begin{equation}\label{S2mult1}
\mathcal{J}^{multi}_{s;n,N}(\lambda^n,...,\lambda^N)=\frac{\widetilde{J}_{\lambda^N}(\mathfrak{r}_s)J_{\lambda^N/\lambda^{N-1}}(1^1)\cdots J_{\lambda^{n+1}/\lambda^n}(1^1)J_{\lambda^n}(1^n)}{H_\theta(\mathfrak{r}_s; 1^N)}.
\end{equation}
For future use we also record the following formulas, which can be obtained from equations (\ref{S2eqDual}), (\ref{S2eq1}), (\ref{S2eqJ}) and the definition of $\mathfrak{r}_s$
\begin{equation}\label{S2mult2}
\begin{split}
&\mathcal{J}^{multi}_{s;n,N}(\lambda^n,...,\lambda^N)={\bf 1}_{\{ \ell(\lambda^n) \leq n\}} \cdot e^{-\theta s N}s^{|\lambda^N|}\theta^{|\lambda^N|}  \prod_{\square\in{\lambda^N}}\frac{1}{a(\square)+\theta{l(\square)+1}} \cdot\\
&  \prod_{\square\in{\lambda^n}}\frac{n\theta+a'(\square)-{\theta}l'(\square)}{a(\square)+{\theta}l(\square)+\theta} \cdot \prod_{k = n+1}^N J_{\lambda^k/\lambda^{k-1}}(1^1), \mbox{ with}\\
&J_{\lambda^k/\lambda^{k-1}}(1^1)= {\bf 1}_{\{\lambda^{k-1} \prec \lambda^k\}}\prod_{1\leq{i}\leq{j}\leq{k-1}}\frac{f(\lambda_i^{k-1}-\lambda_j^{k-1}+\theta(j-i))f(\lambda_i^k-\lambda_{j+1}^k+\theta(j-i))}{f(\lambda_i^{k-1}-\lambda_{j+1}^k+\theta(j-i))f(\lambda_i^k-\lambda_j^{k-1}+\theta(j-i))}
\end{split}
\end{equation}
$ \mbox{ where $\lambda^0 = \varnothing$ and $f(z)=\frac{\Gamma(z+1)}{\Gamma(z+\theta)}$}$. Here $\Gamma$ denotes the usual gamma function.
\end{definition}

We summarize some of the properties of $\mathcal{J}^{multi}_{s;n,N}$ in the following lemma.
\begin{lemma}\label{S2LMJ}
Let $\mathcal{J}^{multi}_{s;n,N}$ be as in (\ref{S2mult1}). Then $\mathcal{J}^{multi}_{s;n,N}$ defines a probability measure on $\mathbb{GT}^N_n$, which satisfies the following {\em Jack-Gibbs property}:
\begin{equation}\label{JackGibbs}
\mathcal{J}^{multi}_{s;n,N}(\lambda^n,...,\lambda^{N-1}|\lambda^{N} = \mu) \propto J_{\mu/\lambda^{N-1}}(1^1)\cdots J_{\lambda^{n+1}/\lambda^n}(1^1)J_{\lambda^n}(1^n).
\end{equation}
Furthermore, the projection of $\mathcal{J}^{multi}_{s;n,N}$ to $(\lambda^{n'},...,\lambda^{N'})$ where $n \leq n' \leq N' \leq N$ is given by $\mathcal{J}^{multi}_{s;n',N'}$ and the projection to $\lambda^{N'}$ is $\mathcal{J}_{1^{N'};\mathfrak{r}_{s}}$.
\end{lemma}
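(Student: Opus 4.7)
The plan is to verify the four claims of the lemma in sequence: non-negativity, total mass $1$, the Jack--Gibbs conditional formula, and the two marginalization statements. All four rely on the algebraic identities for Jack polynomials recorded in Section \ref{Section2.1}, particularly the branching rule (\ref{skewJ2}) and the skew Cauchy identity (\ref{SkewCauchy}).

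\emph{Non-negativity and total mass.} Each of the factors $\widetilde{J}_{\lambda^N}(\mathfrak{r}_s)$, $J_{\lambda^k/\lambda^{k-1}}(1^1)$, and $J_{\lambda^n}(1^n)$ is non-negative, since $\mathfrak{r}_s$, $1^n$, $1^1$ are all Jack-positive specializations by Proposition \ref{propClass}, and $\widetilde{J}_{\lambda^N}$ differs from $J_{\lambda^N}$ only by the positive constant displayed in (\ref{S2eqDual}). To show the total mass equals $1$, I would telescope the sum from the bottom up using (\ref{skewJ}): iteratively applying the identity $\sum_{\mu} J_{\nu/\mu}(X) J_\mu(Y) = J_\nu(X,Y)$ with Definition \ref{S2union} collapses
$$\sum_{\lambda^n,\dots,\lambda^{N-1}} J_{\lambda^N/\lambda^{N-1}}(1^1) \cdots J_{\lambda^{n+1}/\lambda^n}(1^1) J_{\lambda^n}(1^n) \;=\; J_{\lambda^N}(1^N).$$
Substituting this and invoking the Cauchy identity (\ref{Cauchy}) reduces the remaining sum $\sum_{\lambda^N} \widetilde{J}_{\lambda^N}(\mathfrak{r}_s) J_{\lambda^N}(1^N)$ to $H_\theta(\mathfrak{r}_s;1^N)$, which cancels the normalization.

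\emph{Jack--Gibbs property.} Fixing $\lambda^N = \mu$ in (\ref{S2mult1}) leaves only the factors $J_{\mu/\lambda^{N-1}}(1^1) \cdots J_{\lambda^{n+1}/\lambda^n}(1^1) J_{\lambda^n}(1^n)$ depending on the lower levels, so the Gibbs formula is immediate by conditioning (the $\mu$-dependent piece $\widetilde{J}_\mu(\mathfrak{r}_s)$ and the normalization $H_\theta(\mathfrak{r}_s;1^N)$ get absorbed into the marginal of $\lambda^N$).

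\emph{Projections.} For the marginal on $(\lambda^{n'},\dots,\lambda^{N'})$ I would sum out the bottom block $\lambda^n,\dots,\lambda^{n'-1}$ and the top block $\lambda^{N'+1},\dots,\lambda^N$ separately. The bottom block is handled exactly as in the normalization step: repeated use of (\ref{skewJ2}) yields $J_{\lambda^{n'}}(1^{n'})$. For the top block the key identity is (\ref{SkewCauchy}) with $\kappa = \varnothing$, which gives
$$\sum_{\lambda^N} J_{\lambda^N/\lambda^{N-1}}(1^1)\, \widetilde{J}_{\lambda^N}(\mathfrak{r}_s) \;=\; H_\theta(1^1;\mathfrak{r}_s)\, \widetilde{J}_{\lambda^{N-1}}(\mathfrak{r}_s) \;=\; e^{\theta s}\, \widetilde{J}_{\lambda^{N-1}}(\mathfrak{r}_s),$$
using $p_1(\mathfrak{r}_s) = s$ and $p_k(\mathfrak{r}_s) = 0$ for $k \geq 2$. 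Iterating $N-N'$ times produces the factor $e^{\theta s(N-N')}\,\widetilde{J}_{\lambda^{N'}}(\mathfrak{r}_s)$, and this factor precisely matches the ratio $H_\theta(\mathfrak{r}_s;1^N)/H_\theta(\mathfrak{r}_s;1^{N'})$ between the old and new normalizations, so the result is $\mathcal{J}^{multi}_{s;n',N'}$. The projection to a single level $\lambda^{N'}$ is the special case $n' = N'$, in which the product of skew factors is empty and one is left with $\widetilde{J}_{\lambda^{N'}}(\mathfrak{r}_s) J_{\lambda^{N'}}(1^{N'})/H_\theta(\mathfrak{r}_s;1^{N'}) = \mathcal{J}_{1^{N'};\mathfrak{r}_s}(\lambda^{N'})$.

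The only slightly subtle step is the top-level marginalization, where one must invoke the skew Cauchy identity rather than the plain branching rule and carefully track that the exponential prefactor arising from $H_\theta(1^1;\mathfrak{r}_s)$ is exactly what is needed to convert $H_\theta(\mathfrak{r}_s;1^N)$ into $H_\theta(\mathfrak{r}_s;1^{N'})$. Once this bookkeeping is done, everything else reduces to bookkeeping with Jack-positive specializations.
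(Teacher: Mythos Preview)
Your proof is correct and follows essentially the same approach as the paper: branching (\ref{skewJ2}) to collapse the lower levels, the skew Cauchy identity (\ref{SkewCauchy}) to handle the upper levels, and tracking the $H_\theta$ prefactors. The only cosmetic difference is that the paper first collapses the intermediate top levels $\lambda^{N'+1},\dots,\lambda^{N-1}$ via branching into a single factor $J_{\lambda^N/\lambda^{N'}}(1^{N-N'})$ and then applies (\ref{SkewCauchy}) once to sum over $\lambda^N$, whereas you peel off the top levels one at a time by iterating (\ref{SkewCauchy}); the two computations are equivalent.
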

\begin{proof}
Let us sum (\ref{S2mult1}) over $\lambda^{n},...,\lambda^{n'-1}$ and over $\lambda^{N' + 1},...,\lambda^{N-1}$. Using (\ref{skewJ2}) we see that the result is 
\begin{equation}\label{S2Leq}
\frac{\widetilde{J}_{\lambda^N}(\mathfrak{r}_s)J_{\lambda^{N}/\lambda^{{N'}}}(1^{N-N'})J_{\lambda^{n' }}(1^{n'})\prod_{k = n'}^{N'-1} J_{\lambda^{k+1}/\lambda^{k}}(1^1)}{H_\theta(\mathfrak{r}_s; 1^N)}.
\end{equation}
If we set $n'=N' = N$ we see that the above is equal to $\frac{\widetilde{J}_{\lambda^N}(\mathfrak{r}_s)J_{\lambda^{N }}(1^{N})}{H_\theta(\mathfrak{r}_s; 1^N)}$, which by (\ref{Cauchy}) proves that $\mathcal{J}^{multi}_{s;n,N}$ is indeed a probability measure. The fact that $\mathcal{J}^{multi}_{s;n,N}$ satisfies the the Jack-Gibbs property is immediate from its definition.

If we sum (\ref{S2Leq}) over $N$ and use (\ref{SkewCauchy}) we will obtain
$$H_\theta( 1^{N - N'};\mathfrak{r}_s)\frac{J_{\lambda^{{N'}}}(\mathfrak{r}_s)J_{\lambda^{n' }}(1^{n'})\prod_{k = n'}^{N'-1} J_{\lambda^{k+1}/\lambda^{k}}(1^1)}{H_\theta(\mathfrak{r}_s; 1^N)}.$$
Using that $H_\theta( 1^{k}; \mathfrak{r}_s) = H_\theta(  \mathfrak{r}_s; 1^{k}) = e^{\theta sk}$, we recognize the above as  $\mathcal{J}^{multi}_{s;n',N'}(\lambda^{n'},...,\lambda^{N'})$, which proves the second part of the lemma. The last part follows from the second one when $n' = N'$.
\end{proof}
\begin{remark}
The measure $\mathcal{J}^{multi}_{s;1,N}$ is the ascending Jack process with specialization $\rho = \mathfrak{r}_s$ (see Definition 2.12 in \cite{GS1}).
\end{remark}
\begin{remark}
When $\theta = 1$, (\ref{JackGibbs}) means that the conditional distribution of $\lambda^n,...,\lambda^{N-1}$ is {\em uniform} on the polytope defined by the interlacing conditions.
\end{remark}

Our next goal is to define a stochastic dynamics on $\mathbb{GT}^N_n$, whose restriction to level $k \in \{n,...,N\} $ has the same law as $X^k_{disc}(s)$, $s\geq 0$. Analogues of this construction were done in \cite{Bor11, BorCor, BorFer, BorGorIP} and they are based on an idea going back to \cite{DF}, which allows one to couple dynamics of Young diagrams of different sizes. 

Let us fix $\epsilon > 0$. For $k = n,...,N$ we introduce the following notation
\begin{equation*}
\begin{split}
P_k(\lambda, \mu)  &=  p_{\lambda \rightarrow \mu}^{\uparrow}(1^k; \mathfrak{r}_\epsilon)\mbox{ for $\lambda, \mu \in \mathbb{Y}^k$,} \\
 \Lambda^k_{k-1}(\lambda, \mu) &=  p_{\lambda \rightarrow \mu}^{\downarrow}(1^{k-1}; 1)  \mbox{ for $\lambda \in \mathbb{Y}^k$ and $\mu \in\mathbb{Y}^{k-1}$.} 
\end{split}
\end{equation*}
It follows from Proposition \ref{transProp} that $P_k\Lambda^k_{k-1} = \Lambda^k_{k-1}P_{k-1}$ for $k = n+1,...,N$ and we denote this matrix by $\Delta^k_{k-1}$. For $X = (x_n,...,x_N), Y = (y_n,...,y_N)\in \mathbb{GT}^N_n$ we define
\begin{equation}\label{MTP}
\mathbb{P}^{(N,n)}(X,Y) = P_n(x_n,y_n) \prod_{k = n+1}^N \frac{P_k(x_k,y_k) \Lambda^k_{k-1}(y_k, y_{k-1})}{\Delta^k_{k-1}(x_k,y_{k-1})}
\end{equation}
if $\prod_{k = n+1}^N\Delta^k_{k-1}(x_k,y_{k-1}) > 0$ and $0$ otherwise. As shown in Section 2.2 in \cite{BorFer}, we have that $\mathbb{P}^{(N,n)}$ is a stochastic matrix and so we can define with it a {\em discrete time} Markov chain on $\mathbb{GT}^N_n$. 
\begin{remark}\label{SU} One way to think of $\mathbb{P}^{(N,n)}$ is as follows: Starting from $X = (x_n,...,x_N)$ we first choose $y_n$ according to the transition matrix $P_n(x_n,y_n)$, then choose $y_{n+1}$ using $\frac{P_{n+1}(x_{n+1}, y_{n+1})\Lambda^{n+1}_n(y_{n+1}, y_n)}{\Delta^{n+1}_n(x_{n+1}, y_n)}$, which is the conditional distribution of the middle point in successive applications of $P_{n+1}$ and $\Lambda^{n+1}_n$, provided that we start from $x_{n+1}$ and finish at $y_n$, after that we choose $y_{n+2}$ using the conditional distribution of the middle point of successive applications of $P_{n+2}$ and $\Lambda^{n+2}_{n+1}$ provided we start at $x_{n+2}$ and finish at $y_{n+1}$ and so on. One calls this procedure of obtaining $Y$ from $X$ a {\em sequential update}.
\end{remark}

We denote the Markov chain with transition matrix $\mathbb{P}^{(N,n)}$, which is started from $ (\varnothing, ..., \varnothing) \in \mathbb{GT}^N_n$ (i.e. the sequence $(\lambda^n ,...,\lambda^N)$ with $\lambda^j_i = 0$ for $1 \leq i \leq j$ and $n \leq j \leq N$), by $\hat{X}^{multi}_{n,N}(m; \epsilon), m\in \mathbb{N}_0$. The following proposition contains the main property of $\hat{X}^{multi}_{n,N}(m; \epsilon), m\in \mathbb{N}_0$ that we will need.
\begin{proposition}\label{S2PD} Let $\hat{X}^{multi}_{n,N}(m; \epsilon), m\in \mathbb{N}_0$ be as above.
\begin{enumerate}
\item The restriction of $\hat{X}^{multi}_{n,N}(m; \epsilon), m\in \mathbb{N}_0$ to levels $\{n', n'+1,...,N'\}$ with $n \leq n' \leq N' \leq N$ has the same law as $\hat{X}^{multi}_{n',N'}(m; \epsilon), m\in \mathbb{N}_0$. 
\item The law of $\hat{X}^{multi}_{n,N}(m; \epsilon)$ for fixed $m \geq 0$ is given by $\mathcal{J}^{multi}_{m\epsilon;n,N}$.
\item For each $k \in \{n,...,N\}$ we have that $\hat{X}^{multi}_{n,N}(m; \epsilon),  m\in \mathbb{N}_0$, restricted to level $k$, has the same law as $X^k_{disc}(m\epsilon),  m\in \mathbb{N}_0$.
\end{enumerate}
\end{proposition}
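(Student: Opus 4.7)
The plan is to establish the three parts in the order (2), (1), (3). The core algebraic fact underlying everything is the decomposition
\[
\mathcal{J}^{multi}_{s;n,N}(\lambda^n,\ldots,\lambda^N) \;=\; \mathcal{J}_{1^N;\mathfrak{r}_s}(\lambda^N)\,\prod_{k=n+1}^N \Lambda^k_{k-1}(\lambda^k,\lambda^{k-1}),
\]
which follows by substituting $\Lambda^k_{k-1}(\lambda,\mu) = J_\mu(1^{k-1}) J_{\lambda/\mu}(1^1)/J_\lambda(1^k)$ and telescoping the product $J_{\lambda^n}(1^n)/J_{\lambda^N}(1^N)$ out of the explicit form of $\mathcal{J}^{multi}_{s;n,N}$.

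For part (2) I would induct on $m$. The base case $m=0$ is immediate: by Proposition \ref{S2Prop2}, $\widetilde{J}_{\lambda}(\mathfrak{r}_0) = 0$ unless $\lambda = \varnothing$, so $\mathcal{J}^{multi}_{0;n,N}$ is the point mass at $(\varnothing,\ldots,\varnothing)$, matching the initial distribution. For the inductive step I would substitute the decomposition above together with the product formula (\ref{MTP}) into $\sum_X \mathcal{J}^{multi}_{(m-1)\epsilon;n,N}(X)\,\mathbb{P}^{(N,n)}(X,Y)$ and sum out $x_n, x_{n+1},\ldots,x_{N-1}$ in that order. At each level $k$ the intertwining identity $\sum_{x_k} \Lambda^{k+1}_k(x_{k+1},x_k) P_k(x_k,y_k) = \Delta^{k+1}_k(x_{k+1},y_k)$ from Proposition \ref{transProp} cancels the $\Delta^{k+1}_k(x_{k+1},y_k)$ appearing in the denominator at level $k+1$, leaving only $\Lambda^{k+1}_k(y_{k+1},y_k)$. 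After sweeping through the intermediate levels one is left with $\sum_{x_N}\mathcal{J}_{1^N;\mathfrak{r}_{(m-1)\epsilon}}(x_N)\,P_N(x_N,y_N)\prod_{k>n}\Lambda^k_{k-1}(y_k,y_{k-1})$, and the $x_N$-sum collapses to $\mathcal{J}_{1^N;\mathfrak{r}_{m\epsilon}}(y_N)$ by the second identity in Proposition \ref{transProp} (using the symmetry $\mathcal{J}_{\rho_1;\rho_2} = \mathcal{J}_{\rho_2;\rho_1}$, valid because $\widetilde{J}_\lambda$ is a $\lambda$-dependent scalar multiple of $J_\lambda$, and the additivity $p_k(\mathfrak{r}_s) + p_k(\mathfrak{r}_\epsilon) = p_k(\mathfrak{r}_{s+\epsilon})$). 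Combined with the decomposition, this yields $\mathcal{J}^{multi}_{m\epsilon;n,N}(Y)$.

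For part (1), the restriction to $\{n,\ldots,N'\}$ with $N' \leq N$ follows directly from the sequential update of Remark \ref{SU}: the update rule for $y_k$ with $k\leq N'$ references only the $x_j$ with $j\leq k$, so discarding levels above $N'$ yields a Markov chain on $\mathbb{GT}^{N'}_n$ with transition $\mathbb{P}^{(N',n)}$. For the restriction to $\{n',\ldots,N\}$ with $n' \geq n$ it suffices, by iteration, to treat $n' = n+1$. Here I would combine part (2) with the Jack-Gibbs property from Lemma \ref{S2LMJ}: at every integer time, conditional on $(\lambda^{n+1},\ldots,\lambda^N)$, the marginal of $\lambda^n$ under $\mathcal{J}^{multi}_{m\epsilon;n,N}$ is exactly $\Lambda^{n+1}_n(\lambda^{n+1},\cdot)$, which depends only on the adjacent level. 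Computing the one-step transition of the projected process by averaging $\mathbb{P}^{(N,n)}(X,Y)$ against this Gibbs conditional and summing over $y_n$, the combination $\sum_{x_n}\Lambda^{n+1}_n(x_{n+1},x_n) P_n(x_n,y_n) = \Delta^{n+1}_n(x_{n+1},y_n)$ eliminates the bottom-level factors cleanly and yields exactly $\mathbb{P}^{(N,n+1)}((X_{n+1},\ldots,X_N),(Y_{n+1},\ldots,Y_N))$. A Rogers-Pitman style argument then upgrades one-step agreement to equality in law as processes. Part (3) is the special case $n' = N' = k$ of part (1), combined with the fact that $P_k(\lambda,\mu) = p^{\uparrow}_{\lambda\to\mu}(1^k;\mathfrak{r}_\epsilon)$ is precisely the $\epsilon$-step transition of $X^k_{disc}$, so $\hat{X}^{multi}_{k,k}(m;\epsilon)$ and $X^k_{disc}(m\epsilon)$ agree in law as processes in $m$.

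The most delicate point is the upper-level projection in part (1): one must show not merely that one-step marginals agree but that the projected process is genuinely Markov with the advertised kernel $\mathbb{P}^{(N,n+1)}$. This hinges on the fact that at every integer time $m$ the full chain's law is the Gibbs measure $\mathcal{J}^{multi}_{m\epsilon;n,N}$, so the bottom level can be averaged out against the Gibbs conditional without introducing any hidden dependence on the past -- a property that would fail for a generic initial distribution. Apart from this subtlety the entire argument reduces to the intertwining $P_k\Lambda^k_{k-1} = \Lambda^k_{k-1}P_{k-1}$ and the telescoping structure of $\mathcal{J}^{multi}$.
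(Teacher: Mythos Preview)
Your proposal is correct and follows essentially the same route as the paper: the paper disposes of parts (1) and (2) by invoking Proposition~2.2 of \cite{BorFer} and Proposition~2.5 of \cite{BorCor}, whose content is precisely the inductive intertwining computation and the Rogers--Pitman projection argument you spell out, with the key inputs being the commutation $P_k\Lambda^k_{k-1}=\Lambda^k_{k-1}P_{k-1}$ and the Jack--Gibbs property of the initial (delta) distribution. Your ordering $(2)\Rightarrow(1)\Rightarrow(3)$ is a harmless rearrangement, and your explicit identification of the subtle point in the lower-level projection (that Markovianity of the projection relies on the Gibbs structure holding at every time, not just initially) is exactly what the cited Borodin--Ferrari result packages.
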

\begin{proof}
We know that the restriction of $\hat{X}^{multi}_{n,N}(m; \epsilon), m\in \mathbb{N}_0$ to levels $\{n,...,N'\}$ has the same law as $\hat{X}^{multi}_{n,N'}(m; \epsilon), m\in \mathbb{N}_0$, because of the sequential update property of the transition matrix $\mathbb{P}^{(N,n)}$ (see Remark \ref{SU}) and this holds for any initial conditions. Thus it suffices to show the first property when $N' = N$. The first statement now follows from Proposition 2.2 in \cite{BorFer} applied to the following setting
$$S^* = \mathbb{Y}^{n +1} \times \cdots \times \mathbb{Y}^N, \hspace{5mm} S = \mathbb{Y}^n, \hspace{5mm} \Lambda((x_{n+1},...,x_{N}), x_n) = \Lambda^{n+1}_n(x_{n+1}, x_n)$$
$$P^*(X,Y) =  P_{n+1}(x_{n+1},y_{n+1}) \prod_{k = n+2}^N \frac{P_k(x_k,y_k) \Lambda^k_{k-1}(y_k, y_{k-1})}{\Delta^k_{k-1}(x_k,y_{k-1})}, \hspace{5mm}  P(x,y) = P_1(x,y),$$
where $X = (x_{n+1},...,x_{N})$ and $Y = (y_{n+1},...,y_{N})$ are in $S^*$. The essential ingredients we need are the commutation relation $P_k\Lambda^k_{k-1} = \Lambda^k_{k-1}P_{k-1}$ and the fact that the delta mass at $(\varnothing, ..., \varnothing)$ satisfies the Jack-Gibbs property (see (\ref{JackGibbs})). The second property is a consequence of Proposition 2.5 in \cite{BorCor} and again relies on the commutation relation $P_k\Lambda^k_{k-1} = \Lambda^k_{k-1}P_{k-1}$ and the fact that the delta mass at $(\varnothing, ..., \varnothing)$ satisfies the Jack-Gibbs property. 

The third property follows from the first by setting $n' = k = N'$, since then $\hat{X}^{multi}_{k,k}(m; \epsilon), m\in \mathbb{N}_0$ is defined entirely in terms of $P_k$, which is the transition matrix for $X^N_{disc}(m\epsilon), m \geq 0$ and the distribution of the two chains at $m = 0$ is the same.
\end{proof}

As discussed in Section 2.3 \cite{GS1}, the continuous time processes $\hat{X}^{multi}_{n,N}(\lfloor s \epsilon^{-1} \rfloor; \epsilon),s \geq 0$, weakly converge to a fixed continuous time process. We will only sketch the main ideas and refer the reader to Section 2.7 in \cite{BorFer} and Section 2.3.3 in \cite{BorCor} for a careful treatment of this limit transition in the context of Schur and Macdonald polynomials.

Using that $\widetilde{J}_{\kappa/\lambda}(\mathfrak{r}_\epsilon)$ is of order $\epsilon^{|\kappa|-|\lambda|}$ as $\epsilon \rightarrow 0^+$ one obtains that 
$$\mathbb{P}^{(N,n)}(X,Y) \approx {\bf 1}_{\{X = Y\}} + \epsilon Q(X,Y) + O(\epsilon^2),$$
where $Q$ is a certain matrix with rows and columns parametrized by elements in $\mathbb{GT}^N_n$. Let $X = (\lambda^n,...,\lambda^N)$ and $Y = (\mu^n,...,\mu^N)$, then the entry $Q(X,Y)$ can be found as follows. Suppose that $X \neq Y$ and there exist integers $A,B,C$ and $x$ such that
$$1 \leq A \leq B, \hspace{5mm} n \leq B \leq N, \hspace{5mm} 0 \leq C \leq N-B,$$
\begin{equation*}
\begin{split}
&\lambda^B_A = \lambda_A^{B+1} = \cdots = \lambda_A^{B+C} = x,\\
&\mu^B_A = \mu_A^{B+1} = \cdots = \mu_A^{B+C} = x + 1,
\end{split}
\end{equation*}
$\mu^m_k = \lambda_k^m$ for all other values of $(k,m)$. Then we have that 
\begin{equation}\label{Q1}
Q(X,Y) = \begin{cases} \widetilde{J}_{\mu^B / \lambda^B}(\mathfrak{r}_1) \cfrac{J_{\mu^B / \lambda^{B-1}}(1^1)}{J_{\lambda^B / \lambda^{B-1}}(1^1)} &\mbox{ if } B \geq n+1\\  \widetilde{J}_{\mu^B / \lambda^B}(\mathfrak{r}_1) \cfrac{J_{\mu^B }(1^n)}{J_{\lambda^B }(1^n)} &\mbox{ if }B = n.\end{cases}
\end{equation}
In all other cases when $X\neq Y$ we have $Q(X,Y) = 0$ and the diagonal entries are given by
\begin{equation}\label{Q2}
Q(X,X) = - \sum_{Y \neq X} Q(X,Y).
\end{equation}
The above description of $Q$ implies that in each row at most $N(N+1)/2$ off-diagonal entries are non-zero and we note that they are uniformly bounded (independently of $X$ and $Y$). Indeed, by (\ref{JackSingle}) we have that $J_{\mu / \lambda}(1^1)$ is bounded from above and below for any $\mu, \lambda \in \mathbb{Y}^N$ and $\lambda \prec \mu$. In addition, we have that $\mathcal{J}_{\mu / \lambda}(1^1) = \mathcal{J}_{\mu / \lambda}(\mathfrak{r}_1)$, whenever $\mu = \lambda \sqcup \square$. Thus the top expressions in (\ref{Q1}) are all bounded away from $0$ and $\infty$. From Proposition \ref{poisson} we know that the sum over $\mu^B$ of the bottom expression in (\ref{Q1}) equals $\theta n$. Overall, this implies that $Q(X,X)$ are uniformly bounded from below and so by Theorem 2.37 in \cite{Lig} there exists a (unique) continuous time Markov chain, whose infinitesimal generator (or $Q$-matrix) is given by $Q$.  

From Lemma 2.21 in \cite{BorFer} we have that $\lim_{\epsilon \rightarrow 0^{+}}\left( \mathbb{P}^{(N,n)}\right)^{\lfloor s\epsilon^{-1} \rfloor} = \exp(t Q)$, which in turn implies the weak convergence of  $\hat{X}^{multi}_{n,N}(\lfloor s \epsilon^{-1} \rfloor; \epsilon),s \geq 0$ to the continuous time Markov chain with infinitesimal generator $Q$, started from $(\varnothing,...,\varnothing)$. We call the latter process $X^{multi}_{n,N}(s); s\geq 0$ and isolate its definition below for future reference.
\begin{definition}\label{mainDef2} For $N,n \in \mathbb{N}$ with $N \geq n$ we define the multilevel Jack process to be the continuous time Markov chain $X^{multi}_{n,N}(s); s\geq 0$ on $\mathbb{GT}^N_n$ with initial condition $(\varnothing, ... \varnothing)$ and infinitesimal generator ($Q$-matrix), given by (\ref{Q1}) and (\ref{Q2}).
\end{definition} 
\begin{remark}
For $n = 1$ the process $X^{multi}_{n,N}(s), s\geq 0$ agrees with the process $X^{multi}_{disc}(s), s\geq 0$ of Definition 2.27 in \cite{GS1}.
\end{remark}

An alternative (less formal) description of $X^{multi}_{n,N}(s), s\geq 0$ can be given as follows. Each of the coordinates (particles) $\lambda^B_k$ such that $\lambda^B_k < \min (\lambda^B_{k-1}, \lambda^{B-1}_{k-1})$ has its own exponential clock with rate
\begin{equation*}
q( \lambda^B_k) = \begin{cases} \widetilde{J}_{\lambda^B \sqcup \square_k / \lambda^B}(\mathfrak{r}_1) \cfrac{J_{\lambda^B \sqcup \square_k / \lambda^{B-1}}(1^1)}{J_{\lambda^B / \lambda^{B-1}}(1^1)} &\mbox{ if $B \geq n + 1$},\\ \widetilde{J}_{\lambda^B \sqcup \square_k / \lambda^B}(\mathfrak{r}_1) \cfrac{J_{\lambda^B \sqcup \square_k }(1^n)}{J_{\lambda^B }(1^n)} &\mbox{ if $B = n$}.\end{cases} 
\end{equation*}
All clocks are independent and $\square_k$ denotes the box at location $\lambda^B_k+1$. When the $\lambda^B_k$ clock rings we find the longest string $\lambda_k^B = \lambda_k^{B+1} = \cdots = \lambda_k^{B+C}$ and move all coordinates in the string to the right by one. I.e. when the clock rings the particle $\lambda_k^B$ jumps to the right by $1$ and if the jump violates interlacing with the top rows, it pushes all particles above it to the right to restore interlacing.

We isolate some properties of $X^{multi}_{n,N}(s), s\geq 0$ in a proposition below. The following are obtained from Proposition \ref{S2PD} by a limit transition.
 \begin{proposition}\label{S2PC}
Let $X^{multi}_{n,N}(s), s\geq 0$ be as in Definition \ref{mainDef2}. 
\begin{enumerate}
\item The restriction of $X^{multi}_{n,N}(s), s\geq 0$ to levels $\{n', n'+1,...,N'\}$ with $n \leq n' \leq N' \leq N$ has the same law as ${X}^{multi}_{n',N'}(s), s\geq 0$.
\item The law of $X^{multi}_{n,N}(s)$ at a fixed time $s \geq 0$ is given by $\mathcal{J}^{multi}_{s;n,N}$.
\item For each $k \in \{n,...,N\}$ we have that ${X}^{multi}_{n,N}(s), s\geq 0$, restricted to level $k$, has the same law as $X^k_{disc}(s), s \geq 0$.
\end{enumerate}
\end{proposition}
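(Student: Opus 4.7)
The plan is to derive each of the three claims from the corresponding statement in Proposition \ref{S2PD} for the discrete-time chain $\hat{X}^{multi}_{n,N}(m;\epsilon)$, combined with the weak convergence of the time-rescaled chains $\hat{X}^{multi}_{n,N}(\lfloor s\epsilon^{-1}\rfloor; \epsilon)$ to $X^{multi}_{n,N}(s)$ as $\epsilon \to 0^+$. This path-level convergence on $D([0,\infty), \mathbb{GT}^N_n)$ was sketched in the paragraph preceding the proposition: the generator $Q$ given by (\ref{Q1})--(\ref{Q2}) has uniformly bounded entries and at most $N(N+1)/2$ non-zero off-diagonal entries per row, so the semigroup $\exp(sQ)$ exists and, by Lemma 2.21 in \cite{BorFer}, $(\mathbb{P}^{(N,n)})^{\lfloor s\epsilon^{-1}\rfloor} \to \exp(sQ)$ uniformly on compact $s$-intervals; the standard Trotter--Kurtz upgrade for pure jump chains on a countable state space with common starting state $(\varnothing,\dots,\varnothing)$ then yields weak convergence of paths.

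For property (2), Proposition \ref{S2PD}(2) identifies the law of $\hat{X}^{multi}_{n,N}(\lfloor s\epsilon^{-1}\rfloor;\epsilon)$ with $\mathcal{J}^{multi}_{\epsilon\lfloor s\epsilon^{-1}\rfloor; n, N}$. Inspection of the explicit formula (\ref{S2mult2}) shows that $\mathcal{J}^{multi}_{u;n,N}(\lambda^n,\dots,\lambda^N)$ is continuous in $u$ for each fixed $(\lambda^n,\dots,\lambda^N)\in \mathbb{GT}^N_n$, and $\epsilon\lfloor s\epsilon^{-1}\rfloor \to s$, so the one-dimensional marginals converge weakly to $\mathcal{J}^{multi}_{s;n,N}$. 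Since $X^{multi}_{n,N}$ has no fixed times of discontinuity (its jumps are determined by independent exponential clocks), weak convergence of paths forces convergence of the fixed-time marginal at every $s\geq 0$, and uniqueness of weak limits identifies the law of $X^{multi}_{n,N}(s)$ with $\mathcal{J}^{multi}_{s;n,N}$.

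For property (1), observe that the projection $\pi: \mathbb{GT}^N_n \to \mathbb{GT}^{N'}_{n'}$ sending $(\lambda^n,\dots,\lambda^N)$ to $(\lambda^{n'},\dots,\lambda^{N'})$ is continuous (the state space being discrete) and hence induces a continuous map $D([0,\infty),\mathbb{GT}^N_n) \to D([0,\infty),\mathbb{GT}^{N'}_{n'})$. Proposition \ref{S2PD}(1) says that $\pi \circ \hat{X}^{multi}_{n,N}(\cdot\,;\epsilon)$ has the same law as $\hat{X}^{multi}_{n',N'}(\cdot\,;\epsilon)$. Passing $\epsilon \to 0^+$, the continuous mapping theorem gives weak convergence of the left-hand side to $\pi \circ X^{multi}_{n,N}$, while the same result applied at levels $(n',N')$ gives weak convergence of the right-hand side to $X^{multi}_{n',N'}$. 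Equality in law of these weak limits yields property (1). Property (3) is then the special case $n' = k = N'$ of property (1), together with the fact that by construction the single-level processes $X^k_{disc}$ are the $\epsilon \to 0^+$ limits of $\hat{X}^{multi}_{k,k}(\lfloor s\epsilon^{-1}\rfloor;\epsilon) = X^k_{disc}(\lfloor s\epsilon^{-1}\rfloor \epsilon)$.

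The main technical obstacle lies entirely in the initial path-level weak convergence $\hat{X}^{multi}_{n,N}(\lfloor s\epsilon^{-1}\rfloor;\epsilon) \Rightarrow X^{multi}_{n,N}(s)$ in the Skorohod topology. Semigroup convergence at fixed times follows routinely from the one-step expansion $\mathbb{P}^{(N,n)} = I + \epsilon Q + O(\epsilon^2)$ and the uniform boundedness of $Q$; the non-trivial point is ruling out oscillations on short time scales to obtain tightness on compact intervals, which is the reason the detailed treatment is deferred to the analogous arguments in \cite{BorFer,BorCor}. Once that convergence is granted, properties (1)--(3) reduce to applying continuous functionals (projection, evaluation at $s$) and invoking Proposition \ref{S2PD}.
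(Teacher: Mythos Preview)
Your proposal is correct and follows exactly the approach the paper takes: the paper's entire proof is the single sentence ``The following are obtained from Proposition \ref{S2PD} by a limit transition,'' and you have simply unpacked what that limit transition entails (continuous mapping theorem for the projection, convergence of fixed-time marginals via continuity of $\mathcal{J}^{multi}_{u;n,N}$ in $u$). If anything, you supply more justification than the paper does.
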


\section{Fixed time limit}\label{Section3}
Using the setup of Section \ref{Section2} we can now make a formal definition of the process $X(s;N)$ that we discussed in Section \ref{Section1.2}.
\begin{definition}\label{mainDef} For $\theta > 0$ and $N \in \mathbb{N}$, we define the process $X(s;N) = (X_i^j(s;N): 1 \leq i \leq j \leq N), s\geq 0$ to be the continuous time Markov chain on the space of interlacing arrays $\mathbb{GT}_1^N $, whose distribution is given by the multilevel Jack process $X^{multi}_{1,N}(s); s\geq 0$ of Definition \ref{mainDef2}.
\end{definition}

In this section we prove Theorem \ref{Theorem1}. The main argument is presented in Sections \ref{Section3.2} and \ref{Section3.3}. In the section below we prove several results that will be used in the proof. In the remainder of this paper we write $\xrightarrow{L^1}$, $\xrightarrow{D}$ and $\xrightarrow{\mathbb{P}}$ for convergence in $L^1$, in distribution and in probability respectively.

\subsection{Preliminaries for Theorem \ref{Theorem1}}\label{Section3.1}
In this section we prove several asymptotic results about the distribution of the top row of $X(s;N)$ when $s$ and $N$ become large. These are given in Lemma \ref{LSep} and will be used later in the proof of Theorem \ref{Theorem1}. In order to show Lemma \ref{LSep} we will require several additional results, which we present below. The proof of the following statements relies on an identification of $\mathcal{J}_{1^N;\mathfrak{r}_{s}}$ with the discrete $\beta$-ensemble of \cite{BGG} and is presented in Section \ref{Section5}. 

\begin{proposition}\label{empProp}
Let $X(s;N)$ be as in Definition \ref{mainDef}. For $s \geq 0$ and $t > 0$ define the measures
$$\mu^{s,t}_N= \frac{1}{N}\sum_{i = 1}^N \delta \left( \frac{X^N_i(tN + s ;N) + (N-i+1)}{N}\right).$$ Then there exists a deterministic measure $\mu^t$, such that $\mu^{s,t}_N \Rightarrow \mu^t$ as $N \rightarrow \infty$, in the sense that for any bounded continuous function $f$, we have the following convergence in probability:
$$\lim_{N \rightarrow \infty} \int_{\mathbb{R}}{f(x)d\mu^{s,t}_N(x)}={\int_{\mathbb{R}}{f(x)d\mu^t}(x)}.$$
\end{proposition}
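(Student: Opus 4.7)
The plan is to reduce the statement to a known law of large numbers for discrete $\beta$-ensembles from \cite{BGG}. By Proposition \ref{S2PC}(3) together with the fixed-time identification of the top-level law in Section \ref{Section2.2}, the vector $(X^N_1(tN+s;N),\dots,X^N_N(tN+s;N))$ is distributed according to the Jack measure $\mathcal{J}_{1^N;\mathfrak{r}_{tN+s}}$ given in Proposition \ref{propJM}. My first step is to pass to the shifted variables $Y_i = X^N_i(tN+s;N) + (N-i+1)$, which form a strictly decreasing $N$-tuple of non-negative integers, and rewrite the density in these coordinates. Using the standard identities that convert products over boxes $\square\in\lambda$ into double products over pairs $(i,j)$ expressed as gamma-function ratios of the differences $Y_i - Y_j$, the joint law of $(Y_1,\dots,Y_N)$ takes the form of a discrete $\beta$-ensemble with parameter $\theta$ in the sense of \cite{BGG}, with an explicit $N$-dependent weight $w_N(y)$ coming from the Plancherel factor $e^{-\theta(tN+s)N}(tN+s)^{|\lambda|}\theta^{|\lambda|}$ and the numerators in Proposition \ref{propJM}.

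Next, I would apply the law of large numbers for discrete $\beta$-ensembles from \cite{BGG} to the rescaled empirical measure $\mu^{s,t}_N = \tfrac{1}{N}\sum_i \delta(Y_i/N)$. After the rescaling $y\mapsto y/N$, Stirling's formula applied to the $N$-dependent parts of $w_N$ produces a smooth effective potential $V_t(x)$ on $[0,\infty)$ whose leading behavior depends only on $t$: the additive shift $s$ in $tN+s$ contributes to $N^{-1}\log w_N$ only at order $O(1/N)$ and therefore disappears in the limit. The \cite{BGG} LLN then yields convergence of $\mu^{s,t}_N$, weakly in probability, to the unique minimizer $\mu^t$ of the constrained energy functional
\begin{equation*}
\mathcal{E}_{V_t}(\mu) = \int V_t(x)\,d\mu(x) - \theta\iint \log|x-y|\,d\mu(x)\,d\mu(y),
\end{equation*}
among probability measures on $[0,\infty)$ whose density is bounded above by $1$ (the constraint encoding that the $Y_i$ are distinct integers). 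Since the statement of the proposition only demands the existence of $\mu^t$ and not its explicit description, no further identification of the minimizer is needed at this stage.

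The main technical obstacle is the algebraic step of recasting $\mathcal{J}_{1^N;\mathfrak{r}_{tN+s}}$ in the canonical discrete $\beta$-ensemble form and then verifying the regularity assumptions required by the \cite{BGG} LLN (analyticity of $V_t$ on a neighborhood of its support, proper growth at the boundary, absence of the saturation constraint on the interior of the support, etc.). In particular, Stirling's formula must be applied with uniform control in order to confirm that $s$ genuinely drops out in the limit and that $V_t$ depends only on $t$. Since the authors indicate that these details are carried out in Section \ref{Section5}, my plan follows theirs: perform the identification, check the hypotheses of the \cite{BGG} LLN, and then conclude the desired convergence in probability for bounded continuous test functions.
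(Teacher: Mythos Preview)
Your strategy matches the paper's: identify the top-row law with $\mathcal{J}_{1^N;\mathfrak{r}_{tN+s}}$, recast it as a discrete $\beta$-ensemble, and invoke the law of large numbers of \cite{BGG}. One technical caution: the \cite{BGG} ensemble is formulated in the $\theta$-shifted coordinates $\ell_i=\lambda_{N-i+1}+\theta i$ (Definition~\ref{S5DefBeta} and equation~(\ref{S5eq2})), not in your integer-shifted $Y_i=\lambda_i+(N-i+1)$. Since $Y_{N-j+1}-Y_{N-i+1}=(\ell_j-\ell_i)-(\theta-1)(j-i)$, for $\theta\neq1$ the interaction cannot be written as a function of the $Y$-differences alone, so the density in the $Y$-variables is not literally a discrete $\beta$-ensemble in the sense of \cite{BGG}; you should work with the $\ell_i$ (and the density constraint $\leq\theta^{-1}$ rather than $\leq 1$), and if needed transfer the conclusion to the $Y$-empirical measure afterward. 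For the $s$-independence of the limit, the paper takes a different route from your Stirling argument on the potential: it solves Nekrasov's equation (Corollary~\ref{NekThm} and the subsequent analysis in Section~\ref{Section5.2}) to compute the limiting density explicitly, obtaining formulas (\ref{S5Den1})--(\ref{S5Den2}) that visibly depend only on $t$. Your potential-based argument is more direct for establishing mere $s$-independence, while the paper's approach additionally yields the explicit form of $\mu^t$ and its Stieltjes transform, which are needed elsewhere (e.g.\ in Lemma~\ref{LSep}).
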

The measure $\mu^t$ will be explicitly computed in Section \ref{Section5.2}; however, we summarize the properties we will need in this section.
\begin{enumerate}
\item The measure $\mu^t$ is compactly supported on the interval $[0, b_t]$, where $b_t = \theta (1 + \sqrt{t})^2$.
\item The Stieltjes transform\footnote{The Stieltjes transform of a measure $\mu$ on $\mathbb{R}$ is defined by $G_\mu(z) = \int_\mathbb{R} \frac{1}{z - x}d\mu(x)$ for $z \not \in $ supp($\mu$).} of the measure $\mu^t$, satisfies $\lim_{\epsilon \rightarrow 0} G_{\mu^t}(b_t + \epsilon) = \theta^{-1} \log(1 + t^{-1/2})$.
\end{enumerate}

\begin{proposition}\label{conProp}
Let $X(s;N)$ be as in Definition \ref{mainDef} and fix $s \geq 0$ and $t > 0$. Define $\ell_i = X^N_{N-i+1}(tN + s;N) + \theta \cdot i$ for $i = 1,...,N$. Then we have the following 
\begin{equation}\label{ObsF}
\lim_{N \rightarrow \infty} \mathbb{E} \left[\prod_{j = 1}^{N-1} \left( 1 - \frac{1}{\ell_N - \ell_j} \right) \left( 1 - \frac{2\theta - 1}{\ell_N - \ell_j + \theta - 1}\right) \right] =  \frac{t}{(\sqrt{t} + 1)^2}, 
\end{equation}
\begin{equation}\label{S3Right}
\frac{\ell_N}{N}  \xrightarrow{L^1} \theta(1 + \sqrt{t})^2\mbox{ as $N \rightarrow \infty$}.
\end{equation}
\end{proposition}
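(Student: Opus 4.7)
The strategy for both claims hinges on the identification, carried out in Section \ref{Section5}, of the fixed-time law of the top row of $X(s;N)$, namely $\mathcal{J}_{1^N;\mathfrak{r}_{tN+s}}$, with a discrete $\beta$-ensemble in the sense of \cite{BGG}, with $\beta = 2\theta$ and an explicit weight depending on $t$ and $s$. Under this identification the ordered particles of the ensemble are precisely $\ell_1 < \cdots < \ell_N$, so the law of large numbers (Proposition \ref{empProp}) and the large-deviation estimates for the rightmost particle available in \cite{BGG} apply directly.

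For the edge statement (\ref{S3Right}), convergence in probability $\ell_N/N \to b_t = \theta(1+\sqrt{t})^2$ follows from combining (a) Proposition \ref{empProp}, whose support property forces $\liminf_N \ell_N/N \geq b_t$ (otherwise the limit empirical measure could not be supported up to $b_t$), and (b) an upper-tail large-deviation bound of the form $\mathbb{P}(\ell_N/N > b_t + \delta) \leq e^{-c(\delta)N}$ imported from \cite{BGG}, which forces $\limsup_N \ell_N/N \leq b_t$. To upgrade to $L^1$ convergence, I would use the deterministic bound $\ell_N \leq |X^N_{disc}(tN+s)| + \theta N$ together with Proposition \ref{poisson}, which says that $|X^N_{disc}(tN+s)|$ is Poisson with parameter $\theta N(tN+s)$; this yields uniformly bounded polynomial moments for $\ell_N/N$, and combined with the exponential concentration from (b), gives the uniform integrability necessary to conclude $\mathbb{E}[\ell_N/N]\to b_t$.

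For (\ref{ObsF}), setting $u_j = \ell_N - \ell_j$, the product equals
\begin{equation*}
\prod_{j=1}^{N-1} \frac{(u_j - 1)(u_j - \theta)}{u_j(u_j + \theta - 1)}.
\end{equation*}
Taking logarithms and using the Taylor expansion $\log\frac{(u-1)(u-\theta)}{u(u + \theta - 1)} = -\frac{2\theta}{u} + O(u^{-2})$, one reduces the problem to computing the limit of $-2\theta \sum_{j=1}^{N-1} 1/u_j$. Heuristically this sum is of the form $N \cdot \frac{1}{N}\sum_j \frac{1}{\ell_N/N - \ell_j/N}$, which should approach $G_{\mu^t}(b_t)$ by combining Proposition \ref{empProp}, the edge concentration from (\ref{S3Right}), and the boundary value $\lim_{\epsilon \to 0^+} G_{\mu^t}(b_t + \epsilon) = \theta^{-1}\log(1 + t^{-1/2})$ of property (2). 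Exponentiating yields $\exp(-2\log(1+t^{-1/2})) = t/(1+\sqrt{t})^2$, which is the claimed limit. For $\theta \geq 1$, every factor in the product lies in $[0, 1]$, so dominated convergence automatically upgrades convergence in probability of the product to convergence of its expectation.

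The main obstacle is that $1/(b_t - x)$ has a non-integrable singularity at the right edge $b_t$, so Proposition \ref{empProp} (a weak-convergence statement) does not by itself justify the passage to the limit in $\sum_j 1/u_j$. I would address this via a bulk/edge decomposition: fix $\delta > 0$ small and separate the indices into a bulk region $u_j \geq \delta N$ and an edge region $u_j < \delta N$. The bulk contribution is handled by testing $\mu^{s,t}_N$ against a continuous bounded truncation of $1/(b_t - x)$ supported on $\{x \leq b_t - \delta\}$, letting $\delta \to 0$ and invoking property (2) to recover the full Stieltjes transform in the limit. The edge contribution must be bounded using finer local estimates: the deterministic gap bound $u_j \geq (N - j)\theta$ handles the crudest scale, and more refined near-edge control (of Nekrasov-equation or local-law type, available in \cite{BGG}) is needed to ensure that the edge sum contributes negligibly as $\delta \to 0$. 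The same ingredients control the error $O(\sum_j u_j^{-2})$, which is summable but also requires care at the edge.
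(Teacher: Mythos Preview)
Your treatment of (\ref{S3Right}) is essentially the content of Theorem \ref{thmRight}, which the paper simply cites from \cite{BGG}/\cite{GGG}; so that part matches.

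For (\ref{ObsF}) your route is genuinely different, and there is a real gap at the edge. The paper does \emph{not} attack the product by Taylor expanding its logarithm. Instead it proves an \emph{exact} finite-$N$ identity (Lemma \ref{S5Lemma}): writing $\Delta(x)=\prod_{j=1}^{N-1}\frac{\Gamma(x-\ell_j+1)\Gamma(x-\ell_j+\theta)}{\Gamma(x-\ell_j+1-\theta)\Gamma(x-\ell_j)}$, a one-line shift $\ell_N\mapsto\ell_N+1$ inside the partition sum gives
\[
\mathbb{E}\!\left[\frac{\Delta(\ell_N-1)}{\Delta(\ell_N)}\right]=\mathbb{E}\!\left[\frac{(tN+s)\theta}{\ell_N+1}\right],
\]
and $\Delta(\ell_N-1)/\Delta(\ell_N)$ is \emph{exactly} the product in (\ref{ObsF}). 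The right-hand side is handled by (\ref{S3Right}) and bounded convergence ($\ell_N\ge N\theta$). So the paper never needs to know that $\sum_j 1/(\ell_N-\ell_j)$ converges; in fact the logical flow is the opposite of yours: (\ref{ObsF}) is the \emph{input} used in Lemma \ref{LSep} to deduce that $\sum_j 1/(\ell_N-\ell_j)\to\theta^{-1}\log(1+t^{-1/2})$ and $\sum_j 1/(\ell_N-\ell_j)^2\to 0$, via a squeezing argument on $X_NY_N$.

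Your bulk/edge decomposition gets stuck precisely where the difficulty lives. The crude bound $u_j\ge (N-j)\theta$ only gives $\sum_j 1/u_j=O(\log N)$, so it contributes nothing to proving the edge sum is $o(1)$ as $\delta\to 0$. You then defer to ``Nekrasov-equation or local-law type'' estimates from \cite{BGG}, but you do not name a result and it is not clear that one of the required strength (pointwise control of $\sum_{j:\,u_j<\delta N}1/u_j$) is actually available there. Without that, the argument for (\ref{ObsF}) is incomplete. The exact shift identity is the missing idea: it replaces the hard edge analysis by a single algebraic manipulation.
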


Proposition \ref{empProp} and the two properties that follow it are proved at the end of Section \ref{Section5.2} and Proposition \ref{conProp} is proved in Section \ref{Section5.3}. We now turn to the main result of this section.
\begin{lemma}\label{LSep} Let $X(s;N)$  be as in Definition \ref{mainDef} with $\theta \geq 1$ and fix $s \geq 0$ and $t > 0$. Define $\ell_i = X^N_{N-i+1}(tN + s ;N) + \theta \cdot i$ for $i = 1,...,N$ and $m_j = X^{N-1}_{N-j}(tN + s ;N-1) + \theta \cdot j$ for $j = 1,...,N-1$. Then we have the following convergence results as $N \rightarrow \infty$:
\begin{equation}
\mbox{1.}\hspace{1mm}\cfrac{1}{\ell_N-\ell_{N-1}} \xrightarrow{\mathbb{P}} {0},  \hspace{3mm} \mbox{2.}\sum\limits_{j=1}^{N-1}\cfrac{1}{\ell_N-\ell_j} \xrightarrow{\mathbb{P}} \frac{\log(1+t^{-1/2})}{\theta},  \hspace{3mm}  \mbox{3.}\sum\limits_{j=1}^{N-2}\cfrac{1}{\ell_N-m_j} \xrightarrow{\mathbb{P}} \frac{\log(1+t^{-1/2})}{\theta}.
\end{equation}
\end{lemma}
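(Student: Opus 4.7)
The three statements of Lemma \ref{LSep} govern the right-edge behavior: Statements 2 and 3 identify Stieltjes-type limits at the edge $b_t = \theta(1+\sqrt{t})^2$, while Statement 1 asserts the top gap $\ell_N - \ell_{N-1}$ diverges in probability. My plan is to first prove Statement 2 from the empirical-measure convergence of Proposition \ref{empProp} and the edge convergence (\ref{S3Right}); then derive Statement 1 from Statement 2 combined with the observable identity (\ref{ObsF}); and finally obtain Statement 3 by an analog of the Statement 2 argument on row $N-1$.

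For Statement 2, set $a_j := 1/(\ell_N - \ell_j)$, $b_j := (2\theta-1)/(\ell_N - \ell_j + \theta - 1)$, $c_j := 1-(1-a_j)(1-b_j)$, and write
\[
\sum_{j=1}^{N-1}(\ell_N - \ell_j)^{-1} = \int \frac{d\nu_N^-(x)}{\ell_N/N - x}, \qquad \nu_N^- := \frac{1}{N}\sum_{j=1}^{N-1}\delta_{\ell_j/N}.
\]
By Proposition \ref{empProp} (with the $\theta$-compatible shift in particle labeling), $\nu_N^- \Rightarrow \mu^t$ in probability, and $\ell_N/N \to b_t$ in $L^1$ by (\ref{S3Right}). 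For $\epsilon > 0$ split the integral at $b_t - \epsilon$: on $[0, b_t - \epsilon]$ the integrand is bounded and continuous, so weak convergence yields $\int_{[0, b_t - \epsilon]}(b_t - x)^{-1}d\mu^t(x)$, which tends to $G_{\mu^t}(b_t^+) = \theta^{-1}\log(1 + t^{-1/2})$ as $\epsilon \to 0$ by the given Stieltjes continuity. The tail on $(b_t - \epsilon, \ell_N/N]$ requires a uniform-in-$N$ bound, which I expect to obtain by combining the observable (\ref{ObsF}) with edge large-deviation estimates for the discrete $\beta$-ensemble from Section \ref{Section5}.

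For Statement 1, each factor of (\ref{ObsF}) lies in $[0,1]$ under $\theta \geq 1$ (since $\ell_N - \ell_j \geq \theta \geq 1$), and pointwise $Z_N := \prod_j(1-a_j)(1-b_j) \leq e^{-S_N}$ with $S_N := \sum_j(a_j + b_j)$. By Statement 2 and an entirely parallel argument for $\sum_j b_j$ (the $\theta-1$ offset being asymptotically negligible), $S_N \xrightarrow{\mathbb{P}} 2\log(1 + t^{-1/2})$; bounded convergence gives $\mathbb{E}[e^{-S_N}] \to t/(1+\sqrt{t})^2$, which matches the limit of $\mathbb{E}[Z_N]$ by (\ref{ObsF}). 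Since $e^{-S_N} - Z_N \geq 0$ pointwise with vanishing expectation, $e^{-S_N} - Z_N \xrightarrow{L^1} 0$, hence $Z_N \xrightarrow{\mathbb{P}} t/(1+\sqrt{t})^2 > 0$ and $\log Z_N + S_N \xrightarrow{\mathbb{P}} 0$. The Taylor expansion $\log(1-c_j) + c_j = -c_j^2/2 + O(c_j^3)$ then yields $\sum_j c_j^2 \xrightarrow{\mathbb{P}} 0$; since $c_j - a_j = b_j(1-a_j) \geq 0$ for $\theta \geq 1$ and $a_{N-1} = \max_j a_j$, we obtain $(\ell_N - \ell_{N-1})^{-2} \leq \sum_j a_j^2 \leq \sum_j c_j^2 \xrightarrow{\mathbb{P}} 0$, which is Statement 1.

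Statement 3 is the same Stieltjes-at-the-edge argument as Statement 2, with $(m_j)$ replacing $(\ell_j)$: by Proposition \ref{S2PC}(1), row $N-1$ of $X(s;N)$ has the same law as the top row of $X(s;N-1)$, so the empirical measure of $m_j/N$ converges weakly in probability to $\mu^t$ as well; the natural coupling plus interlacing $X^N_{N-j} \geq X^{N-1}_{N-j} \geq X^N_{N-j+1}$ gives $\ell_j \leq m_j \leq \ell_{j+1} - \theta < \ell_N$, placing the evaluation point to the right of the support. The main obstacle throughout is the edge singularity in Statement 2: the integrand $(\ell_N/N - x)^{-1}$ has a pole at the evaluation point, which converges to a point in the support of $\mu^t$, so standard weak convergence does not directly apply. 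The resolution relies on the finite edge limit $G_{\mu^t}(b_t^+)$ (an integrable edge density for $\mu^t$) together with the edge large-deviation control.
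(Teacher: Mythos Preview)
Your plan has the order of dependence reversed, and this creates a genuine gap. You propose to prove Statement~2 directly (both bounds on $\sum_j 1/(\ell_N-\ell_j)$) and then deduce Statement~1 from it via~(\ref{ObsF}). The paper instead proves only the \emph{lower} bound on the sum directly from the empirical measure, and then extracts the \emph{upper} bound and Statement~1 simultaneously from~(\ref{ObsF}) through a squeeze.

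The difficulty you flag is real: the tail contribution from $\ell_j$ with $\ell_j/N \in (b_t-\epsilon,\,\ell_N/N)$ cannot be bounded uniformly in $N$ by weak convergence plus edge location alone. The minimal spacing $\ell_j-\ell_{j-1}\ge\theta$ gives only $\sum_{j>N-K}1/(\ell_N-\ell_j)\le \theta^{-1}\sum_{k=1}^K 1/k\sim\theta^{-1}\log K$, and since the square-root edge density puts $K\sim N\epsilon^{3/2}$ particles in that window, this bound diverges. Getting the right upper bound this way would require edge local-statistics control that is not available in Section~\ref{Section5}. Your remark that~(\ref{ObsF}) should help is correct in spirit but misplaced in the argument: knowing $\mathbb{E}[Z_N]\to c>0$ with $Z_N\in[0,1]$ only gives $\mathbb{P}(Z_N>\delta)$ bounded away from~$0$, not tending to~$1$, so it does not by itself bound $S_N$ from above in probability.

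The paper's resolution is to set $X_N=e^{-S_N}$ and $Y_N=\exp\bigl(-\tfrac12\sum_j(a_j^2+b_j^2)\bigr)$, so that $Z_N\le X_NY_N$ pointwise with all three in $[0,1]$. Passing to an a.s.\ subsequential limit $(X,Y)$, the easy empirical-measure direction gives $X\le t(1+\sqrt t)^{-2}$ a.s., while~(\ref{ObsF}) gives $\mathbb{E}[XY]\ge t(1+\sqrt t)^{-2}$; together with $Y\le 1$ this forces $X=t(1+\sqrt t)^{-2}$ and $Y=1$ a.s. Thus $Y_N\xrightarrow{\mathbb P}1$ yields Statement~1, and $X_N\xrightarrow{\mathbb P}t(1+\sqrt t)^{-2}$ yields Statement~2 in full. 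Your Statement~1 derivation is then essentially the same computation, but it becomes valid only after the squeeze has already delivered both convergences. For Statement~3 your interlacing idea is right, though the paper does it more simply by sandwiching $\sum_j 1/(\ell_N-m_j)$ between $\sum_j 1/(\ell_N-\ell_j)$ and the same sum minus $1/(\ell_N-\ell_{N-1})$.
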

\begin{proof}
Notice that $\frac{1}{\ell_N - \ell_j} $ and $\frac{2\theta - 1}{\ell_N - \ell_j + \theta - 1}$ are both between $0$ and $1$ (since $\ell_N - \ell_i \geq \ell_N - \ell_{N-1} \geq \theta \geq 1$).

We next observe that if $x \in [0,1]$ then we have $0 \leq (1 - x) \leq \exp( - x - x^2/2)$. Indeed, setting $f(x) = 1 - x - \exp( - x - x^2/2)$ we see that $f'(x) = (1+x)\exp(-x-x^2/2) - 1$ and we have $f'(x) \leq 0 \iff 1 +x \leq \exp(x + x^2/2)$, which is true for all $x \in \mathbb{R}$. So $f$ is a decreasing function on $\mathbb{R}$ and as $f(0) = 0$ we conclude that $f(x) \leq 0$ for $x \in [0,1]$. This proves that $1 - x \leq \exp( - x - x^2/2)$ for $x \in [0,1]$, while $0 \leq 1 - x$ is obvious. The latter implies that
\begin{equation}\label{S3eq1}
 \prod_{j=1}^{N-1}\left(1-\frac{1}{\ell_N-\ell_j}\right)\left(1-\frac{2\theta-1}{\ell_N-\ell_j+\theta-1}\right) \leq X_NY_N,
\end{equation}
where 
\begin{equation}\label{XnYn}
\begin{split}
&X_N= \exp\left[-\sum_{j=1}^{N-1}\left(\frac{1}{\ell_N-\ell_j}+\frac{2\theta-1}{\ell_N-\ell_j+\theta-1}\right)\right] \mbox{ and }\\
&Y_N=\exp\left[-\frac{1}{2}\sum_{j=1}^{N-1}\left(\frac{1}{(\ell_N-l_j)^2}+\frac{(2\theta-1)^2}{(\ell_N-\ell_j-1+\theta)^2}\right)\right].
\end{split}
\end{equation}

In what follows we prove that $X_N \xrightarrow{\mathbb{P}} t(\sqrt{t} + 1)^{-2}$ and $Y_N \xrightarrow{\mathbb{P}} 1 $ as $N \rightarrow \infty$. We observe that $X_N, Y_N \in [0,1]$ and so it suffices to show that any weak subsequential limit of $(X_N,Y_N)$ equals $( t(\sqrt{t} + 1)^{-2}, 1)$. In particular, by possibly passing to a subsequence, we may assume that $X_N$ and $Y_N$ are defined on the same probability space, are converging to random variables $X$ and $Y$ a.s. and we want to show that $X = t(\sqrt{t} + 1)^{-2}$ and $Y = 1$ a.s.

From the Bounded Convergence theorem we know that $\mathbb{E}[XY] = \lim_{N \rightarrow \infty} \mathbb{E}[X_NY_N]$, which together with (\ref{S3eq1}) and (\ref{ObsF}) implies that 
\begin{equation}\label{S3P1}
\mathbb{E}[XY] \geq t(\sqrt{t} + 1)^{-2}.
\end{equation}

Take $\epsilon>0,\delta>0$. By (\ref{S3Right}) we have $\mathbb{P}\left(\frac{\ell_N}{N}\geq{\theta(\sqrt{t}+1)^2}+\frac{\delta}{2}\right)<\epsilon$ for $N$ large enough. On the event $\left\{\frac{\ell_N}{N}\leq{\theta(\sqrt{t}+1)^2}+\frac{\delta}{2}\right\}$ we have for $N$ large enough:
$$\sum_{j=1}^{N-1}\frac{1}{\ell_N-\ell_j}+\frac{2\theta-1}{\ell_N-\ell_j+\theta-1}\geq \frac{1}{N}\sum_{j=1}^{N-1}\frac{1}{\frac{\delta}{2}+\theta(\sqrt{t}+1)^2-\frac{\ell_j}{N}}+\frac{2\theta-1}{\frac{\delta}{2}+\theta(\sqrt{t}+1)^2+\frac{\theta-1}{N}-\frac{\ell_j}{N}}$$
$$\geq  \frac{2\theta}{N}\sum_{j=1}^{N-1}\frac{1}{\delta+\theta(\sqrt{t}+1)^2-\frac{\ell_j}{N}}=\frac{2\theta}{N}\sum_{j=1}^{N}\frac{1}{\delta+\theta(\sqrt{t}+1)^2-\frac{\ell_j}{N}}-\frac{2\theta}{N}\frac{1}{\delta+\theta(\sqrt{t}+1)^2-\frac{\ell_N}{N}}.$$
The first term in the last expression converges to $2\theta \cdot G_{\mu^t}(b_t+\delta)$ by Proposition \ref{empProp} and the discussion below it. On the event $\left\{\frac{\ell_N}{N}\leq {\theta (\sqrt{t}+1)^2}+\frac{\delta}{2}\right\}$ the second term is bounded by
 $$\frac{2\theta}{N}\frac{1}{\delta +\theta (\sqrt{t}+1)^2-\frac{\ell_N}{N}}\leq{\frac{4\theta}{N\delta}}.$$ 
The above suggests that for any $\delta, \epsilon > 0$ we have
$$\liminf_{N \rightarrow \infty} \mathbb{P}(X_N \leq \exp(-2\theta G_{\mu^t}(b_t+\delta)) \geq 1- \epsilon .$$
Consequently, $\mathbb{P}(X \leq  \exp(-2\theta G_{\mu^t}(b_t+\delta)) \geq 1 -\epsilon$. Since $\delta, \epsilon > 0$ are arbitrary, we conclude that 
\begin{equation}\label{S3P2}
X \leq  t(\sqrt{t} + 1)^{-2},
\end{equation}
where we used the second property of $\mu^t$ after Proposition \ref{empProp} and the inequality is a.s. To summarize, we have $0 \leq X \leq t(\sqrt{t} + 1)^{-2}$ and $0 \leq Y \leq 1$ a.s. (recall that $X_N, Y_N \in [0,1]$ a.s.), while the product satisfies $\mathbb{E}[XY] \geq t(\sqrt{t} + 1)^{-2}$. This is possible only if $X = t(\sqrt{t} + 1)^{-2}$ a.s. and $Y = 1$ a.s. We thus conclude that $X_N \xrightarrow{\mathbb{P}} t(\sqrt{t} + 1)^{-2}$ and $Y_N \xrightarrow{\mathbb{P}} 1 $ as $N \rightarrow \infty$. In particular, as $N \rightarrow \infty$ we get
\begin{equation}\label{S3eq2}
\begin{split}
&\sum_{j=1}^{N-1}\frac{1}{(\ell_N-\ell_j)^2}+\frac{(2\theta-1)^2}{(\ell_N-\ell_j+\theta-1)^2}\xrightarrow{\mathbb{P}} 0 \mbox{, and} \\ 
&\sum_{j=1}^{N-1}\frac{1}{\ell_N-\ell_j}+\frac{2\theta-1}{\ell_N-\ell_j+\theta-1}\xrightarrow{\mathbb{P}} 2 \cdot {\log}\left(1+\frac{1}{\sqrt{t}}\right).
\end{split}
\end{equation}

From (\ref{S3eq2}) it is obvious that $\frac{1}{(\ell_N-\ell_j)^2}\xrightarrow{\mathbb{P}}{0}$, and so $\frac{1}{\ell_N-\ell_j}\xrightarrow{\mathbb{P}}{0}$. This proves 1. We next observe that
\begin{equation}\label{S3eq3}
 \sum_{j=1}^{N-1}\frac{1}{\ell_N-\ell_j}+\frac{2\theta-1}{\ell_N-\ell_j+\theta-1}=\sum_{j=1}^{N-1}\frac{2\theta}{\ell_N-\ell_j}+ \sum_{j=1}^{N-1}\frac{(1-\theta)(2\theta-1)}{(\ell_N-\ell_j)(\ell_N-\ell_j+\theta-1)}. 
\end{equation}
Consequently, (\ref{S3eq2}) and (\ref{S3eq3}) show that  $2\theta\sum_{j=1}^{N-1}\frac{1}{\ell_N-\ell_j}\to{2{\log}\left(1+\frac{1}{\sqrt{t}}\right)}$. This proves 2.

Notice that because of the interlacing property of the top two rows of $X(s;N)$, we have that $\ell_1 \leq m_{1} \leq \ell_{2} \leq \cdots \leq \ell_{N-1} \leq m_{N-1} \leq \ell_N$. This implies that
$$\sum_{j=1}^{N-1}\frac{1}{\ell_N-\ell_j}-\frac{1}{\ell_N-\ell_{N-1}}=\sum_{j=1}^{N-2}\frac{1}{\ell_N-\ell_j}\leq\sum_{j=1}^{N-2}\frac{1}{\ell_N-m_j}\leq \sum_{j=1}^{N-2}\frac{1}{\ell_N-\ell_{j+1}}\leq{\sum_{j=1}^{N-1}\frac{1}{\ell_N-\ell_j}}.$$
From the above it is clear that 3. follows from 1.and 2.
\end{proof}

\subsection{Two row analysis}\label{Section3.2}
We start our proof of Theorem \ref{Theorem1} by first showing it holds when $k = 1$. The general statement will be proved in the next section. Our approach here closely follows ideas from \cite{GS2}, where the authors proved an analogous result for $\beta$-Dyson Brownian motion. We summarize the statement we will prove in a lemma.
\begin{lemma}\label{KeyLemma1}
 Let $X(s;N)$ as in Definition \ref{mainDef} with $\theta \geq 1$ and fix $s \geq 0$ and $t > 0$. Define $\ell_i = X^N_{N-i+1}(tN + s ;N) + \theta \cdot i$ for $i = 1,...,N$ and $m_j = X^{N-1}_{N-j}(tN + s ;N-1) + \theta \cdot j$ for $j = 1,...,N-1$. Then $\displaystyle{\ell_N-m_{N-1}}\xrightarrow{D}Z$ as $N \rightarrow \infty$, where 
 $$\mathbb{P}( Z = n+\theta) = (1- p)^{-\theta}\frac{\Gamma(n+\theta)}{\Gamma(n+1)\Gamma(\theta)} p^n, \hspace{1mm} n \in \mathbb{Z}_{\geq 0} \mbox{, and } p = \frac{\sqrt{t}}{1 + \sqrt{t}}.$$. 
\end{lemma}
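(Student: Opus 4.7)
The plan is to use the Jack-Gibbs property of the multilevel Jack measure to express the conditional distribution of $m_{N-1}$ given the top row $(\ell_1, \ldots, \ell_N)$ in explicit form, and then pass to the $N \to \infty$ limit using Lemma \ref{LSep}. The approach parallels \cite{GS2}, where an analogous result was proved in the continuous setting; Lemma \ref{LSep} here substitutes for the random-matrix estimates used there.

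By Proposition \ref{S2PC} and Lemma \ref{S2LMJ}, the joint law of the top two rows at time $tN+s$ is $\mathcal{J}^{multi}_{tN+s;N-1,N}$, and the Jack-Gibbs property gives the conditional density
$$\mathbb{P}(\lambda^{N-1} = \mu \mid \lambda^N = \lambda) = \frac{J_\mu(1^{N-1}) \, J_{\lambda/\mu}(1^1)}{J_\lambda(1^N)}.$$
Substituting the product formulas (\ref{S2eq1}) and (\ref{S2mult2}) and rewriting in terms of the shifted coordinates $\ell_i$ and $m_j$, this density becomes a ratio of products of $f(z) = \Gamma(z+1)/\Gamma(z+\theta)$ at differences $\ell_i - \ell_j$, $\ell_i - m_j$ and $m_i - m_j$, i.e., a discrete $\beta$-ensemble-type weight on the interlaced configuration $(m_1, \ldots, m_{N-1})$.

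To extract the marginal of $m_{N-1}$, I would isolate the $m_{N-1}$-dependent factors. Writing $m_{N-1} = \ell_N - (n+\theta)$ (equivalently $\mu_1 = \lambda_1 - n$), careful bookkeeping should yield that $\mathbb{P}(\mu_1 = \lambda_1 - n \mid \lambda^N)$ factors, up to an interior conditional average over $\mu_2, \ldots, \mu_{N-1}$, as the combinatorial factor $(\theta)_n/n! = \Gamma(n+\theta)/[\Gamma(n+1)\Gamma(\theta)]$ coming from the $n$ new boxes added in the first row of $\mu$, times a product over $j < N$ of $\Gamma$-ratios of the form $\Gamma(\ell_N - m_j)\Gamma(\ell_N - m_j + \theta - n)/[\Gamma(\ell_N - m_j - n)\Gamma(\ell_N - m_j + \theta)]$ (and analogous factors in $\ell_j$ coming from $J_{\lambda/\mu}(1^1)$), together with $\ell$-only boundary terms.

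Finally, I would pass to the limit using Lemma \ref{LSep}. Item 1 makes the interlacing bound $n \leq \ell_N - \ell_{N-1} - \theta$ asymptotically vacuous for each fixed $n$, while items 2 and 3 combined with $\log(1 + t^{-1/2}) = -\log p$ (for $p = \sqrt{t}/(1+\sqrt{t})$) yield $\theta \sum_j (\ell_N - \ell_j)^{-1} \xrightarrow{\mathbb{P}} -\log p$ and $\theta \sum_j (\ell_N - m_j)^{-1} \xrightarrow{\mathbb{P}} -\log p$. Each $\Gamma$-ratio admits the expansion $\Gamma(x+a)\Gamma(x+b)/[\Gamma(x+c)\Gamma(x+d)] = 1 + (cd - ab)/x + O(1/x^2)$ when $a + b = c + d$, so the leading $\log x$ asymptotics cancel and the surviving $1/x$ contributions sum to $-n\theta \sum_j(\ell_N - m_j)^{-1}$, whose exponential converges to $p^n$. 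Combining with $(\theta)_n/n!$ and the $(1-p)^\theta$ factor from normalization, the bounded convergence theorem (with Proposition \ref{conProp} controlling the tail) identifies the limit of $\ell_N - m_{N-1}$ as the claimed negative binomial $Z$. The main obstacle is the factorization claim in the third paragraph: the dependence of $J_\mu(1^{N-1})$ on $\mu_1$ is non-local through the $\mu_1$-dependence of $a(\square)$ and $\mu'_j$ for boxes in the first row of $\mu$, so delicate cancellations with the $\mu_1$-dependent factors in $J_{\lambda/\mu}(1^1)$ are needed to produce the clean form required for the asymptotic analysis, and one must verify that the matching $a+b = c+d$ in the emerging $\Gamma$-ratios (which is what makes the Stieltjes-transform sums of Lemma \ref{LSep} the right observables) holds identically in $n$.
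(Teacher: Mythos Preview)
Your overall strategy matches the paper's, but you are missing the key simplification that makes the argument clean: the paper conditions not just on the top row $(\ell_1,\ldots,\ell_N)$ but on $(\ell_1,\ldots,\ell_N,m_1,\ldots,m_{N-2})$, leaving only $m_{N-1}$ random. With this finer conditioning, the Jack-Gibbs formula (\ref{S2mult2}) gives an \emph{explicit} one-dimensional conditional density
\[
f_N(k) \;=\; c \cdot \frac{\Gamma(\theta+k)}{\Gamma(1+k)}\prod_{i=1}^{N-2}(\ell_N-m_i-k-\theta)\prod_{i=1}^{N-1}\frac{\Gamma(\ell_N-\ell_i-k)}{\Gamma(\ell_N-\ell_i-k+1-\theta)},
\]
where $z_N := \ell_N - m_{N-1} - \theta$ and $f_N(k) = \mathbb{P}(z_N = k \mid \ell_1,\ldots,\ell_N, m_1,\ldots,m_{N-2})$. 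This completely bypasses what you correctly identify as the ``main obstacle'': there is no interior average over $\mu_2,\ldots,\mu_{N-1}$ to perform, and no delicate factorization to verify. The ratio $f_N(k)/f_N(0)$ is then a finite product whose logarithm is controlled directly by items 1--3 of Lemma~\ref{LSep}, exactly as you outline.

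A second point you underspecify is how the normalization $(1-p)^\theta$ emerges. Saying ``from normalization and bounded convergence'' is not enough: one needs tightness of the sequence $f_N(0)$ plus a two-sided squeeze. The paper passes to a subsequential limit $R$ of $f_N(0)\in[0,1]$, shows $R>(1-p)^\theta$ would force $\sum_k f_N(k)>1$ on an event of positive probability (contradiction), and shows $R\geq(1-p-\epsilon)^\theta$ by bounding $f_N(k)/f_N(0)$ uniformly in $k$ via $1-x\leq e^{-x}$ (here $\theta\geq1$ is used) and summing the negative-binomial series. Your appeal to Proposition~\ref{conProp} for tail control does not obviously supply either direction of this squeeze.
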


\begin{proof} Let us fix $\lambda \in \mathbb{Y}^N$ and $\mu \in \mathbb{Y}^{N-1}$ such that $\mu \prec \lambda$. From Proposition \ref{S2PC} we know that 
\begin{equation}\label{S3L1}
\mathbb{P}( X^N(tN + s; N) = \lambda, X^{N-1}(tN + s; N) = \mu) = \mathcal{J}^{multi}_{tN + s;N-1,N}(\mu, \lambda).
\end{equation}
We introduce the following useful notation
$$z_N=\ell_N-m_{N-1} - \theta, \mbox{ and } f_N(k) = \mathbb{P}(z_N=k|\ell_1,...,\ell_N,m_1,...,m_{N-2}) \mbox{, for $k\geq 0$}.$$
Combining (\ref{S3L1}) with (\ref{S2eq1}) and (\ref{S2mult2}) we see that for some $c>0$ (depending on $\ell_1,...,\ell_N$ and $m_1,...,m_{N-2}$) we have
\begin{equation}\label{S3eq4}
f_N(k) = c \cdot \frac{\Gamma(\theta+k)}{\Gamma(1+k)}\prod_{i=1}^{N-2}(\ell_N-m_i-k-\theta)\prod_{i=1}^{N-1}\frac{\Gamma(\ell_N-\ell_i-k)}{\Gamma(\ell_N-\ell_i-k+1-\theta)},
\end{equation}
if $0 \leq k \leq \ell_N - \ell_{N-1} - \theta$ and $0$ otherwise. We want to prove the following statements
\begin{equation}\label{S3Rat}
\frac{f_N(k)}{f_N(0)} \xrightarrow{\mathbb{P}} {p^k} \frac{\Gamma(k+\theta)}{k!\Gamma(\theta)}  \mbox{, for $k \geq 0$ and }  f_N(0)\xrightarrow{\mathbb{P}} (1-p)^{\theta}  \mbox{ as $N \rightarrow \infty$}, 
\end{equation}
where $p = \frac{\sqrt{t}}{1 + \sqrt{t}}$. If (\ref{S3Rat}) is true, then we have that $f_N(k) \xrightarrow{\mathbb{P}} (1- p)^{-\theta}\frac{\Gamma(k+\theta)}{\Gamma(k+1)\Gamma(\theta)}$ as $N \rightarrow \infty$. Taking expectations on both sides (this is allowed by the Bounded Convergence Theorem), we conclude that
$$\lim_{N \rightarrow \infty} \mathbb{P}( z_N = k) = \lim_{N \rightarrow \infty} \mathbb{E}[f_N(k)] = (1- p)^{-\theta}\frac{\Gamma(k+\theta)}{\Gamma(k+1)\Gamma(\theta)} = \mathbb{P}( Z = k+\theta).$$
Since this is true for any $k \geq 0$, we conclude that $\displaystyle{\ell_N-m_{N-1}}\xrightarrow{D}Z$. We thus reduce the proof of the lemma to showing (\ref{S3Rat}).\\

Using (\ref{S3eq4}) we have
\begin{equation}\label{S3eq5}
\frac{f_N(k)}{f_N(0)}=\frac{\Gamma(\theta+k)\Gamma(1)}{\Gamma(1+k)\Gamma(\theta)}\prod_{i=1}^{N-2}\left(1-\frac{k}{\ell_N-m_i-\theta}\right)\prod_{i=1}^{N-1}\frac{f(\ell_N-\ell_i-\theta)}{f(\ell_N-\ell_i-k-\theta)}\mbox{, with $f(z)=\frac{\Gamma(z+1)}{\Gamma(z+\theta)}$}. 
\end{equation}
We also have from the functional equation for the gamma function $\Gamma(x+1) = x\Gamma(x)$ that
\begin{equation}\label{S3eq6}
\prod_{i=1}^{N-1}\frac{f(\ell_N-\ell_i-\theta)}{f(\ell_N-\ell_i-k-\theta)} = \prod_{i=1}^{N-1} \prod_{j = 1}^k \left( 1 - \frac{\theta - 1}{\ell_N - \ell_i - j}\right).
\end{equation}

By Lemma \ref{LSep}, we can find a sequence of sets $D(N)\subset\mathbb{R}^{2N-2}$ such that 
\begin{equation}\label{S3eq7}
\displaystyle{\lim_{N\to\infty}\mathbb{P}(\ell_1,...,\ell_N,m_1,...,m_{N-2}\in{D(N)})=1}
\end{equation}
 and for any sequence $(x_1^N,...,x_N^N,y_1^N,...,y_{N-2}^N)\in{D(N)}$ and $r \in \mathbb{R}$ we have as $N \rightarrow \infty$ that
\begin{equation*}
\mbox{1.} \hspace{1 mm}\frac{1}{x_N^N-x_{N-1}^N + r} \rightarrow 0,\hspace{2mm}
\mbox{2.} \sum_{i=1}^{N-1}\frac{1}{x_N^N-x_i^N + r} \rightarrow \frac{\log(1+\frac{1}{\sqrt{t}})}{\theta},\hspace{2mm}
\mbox{3.} \sum_{i=1}^{N-2}\frac{1}{x_N^N-y_i^N + r} \rightarrow \frac{\log(1+\frac{1}{\sqrt{t}})}{\theta}.
\end{equation*}
If $(x_1^N,...,x_N^N,y_1^N,...,y_{N-2}^N)\in{D(N)}$ is a sequence satisfying the above properties we have
\begin{equation}\label{S3eq8}
\begin{split}
&\lim_{N\to\infty}\prod_{i=1}^{N-2}\left(1-\frac{k}{x^N_N-y^N_i-\theta}\right)  \prod_{i=1}^{N-1} \prod_{j = 1}^k \left( 1 - \frac{\theta - 1}{x^N_N - x^N_i - j}\right) = \\
&\lim_{N\to\infty}\exp\left(-\sum_{i=1}^{N-2}\frac{k}{x^N_N-y^N_i-\theta}-\sum_{i=1}^{N-1}\frac{k(\theta-1)}{x^N_N - x^N_i}+o\left(\frac{1}{x^N_N-x^N_{N-1}}\right)\right) = \left( \frac{\sqrt{t}}{1+\sqrt{t}} \right)^k,
\end{split}
\end{equation}
Combining (\ref{S3eq5}), (\ref{S3eq6}), (\ref{S3eq7}) and (\ref{S3eq8}) we conclude that for every $k \geq 0$ one has
$$\displaystyle{\frac{f_N(k)}{f_N(0)} \xrightarrow{\mathbb{P}}{p^k} \frac{\Gamma(k+\theta)}{k!\Gamma(\theta)}} \mbox{ as $N \rightarrow \infty$, where $\displaystyle{p=\frac{\sqrt{t}}{1+\sqrt{t}}}$}.$$ 

It remains to show that $f_N(0)\xrightarrow{\mathbb{P}} (1-p)^{\theta}$ as $N \rightarrow \infty$. Observe that $ f_N(0) \in [0,1]$ and so it suffices to show that any weak subsequential limit of $ f_N(0)$ equals $ (1-p)^{\theta}$. Let $R$ be a subsequential limit and $f_{N_r}(0) \xrightarrow{D} R$ as $r \rightarrow \infty$. We want to show that $\mathbb{P}(R = (1-p)^\theta) = 1$.

Suppose that we have for some $\delta, \epsilon > 0$ that $\mathbb{P}(R > (1- p)^{\theta} + \epsilon) > \delta$. Then, for all $r$ large enough we would have
$$\mathbb{P}( f_{N_r} (0) >  (1- p)^{\theta} + \epsilon/2) > \delta/2.$$
The latter statement together with the fact that $\frac{f_{N_r}(k)}{f_{N_r}(0)}\xrightarrow{\mathbb{P}}{p^k} \frac{\Gamma(k+\theta)}{k!\Gamma(\theta)}$ as $r \rightarrow \infty$ implies that for $r$ large enough on an event of positive probability we will have that $\sum_{ k \geq 0 }f_{N_r}(k)> 1$, which is a contradiction. We thus conclude that $\mathbb{P}(R \leq (1- p)^{\theta} ) = 1$.

Pick any $(1 + \sqrt{t})^{-1} > \epsilon > 0 $. Using (\ref{S3eq5}), (\ref{S3eq6}) and the inequality $1 - x \leq e^{-x}$ for $x \in [0,1]$ we see that for $0 \leq k \leq \ell_N - \ell_{N-1}-\theta$
$$ \frac{f_N(k)}{f_N(0)} \leq \frac{\Gamma(\theta+k)\Gamma(1)}{\Gamma(1+k)\Gamma(\theta)}\exp\left(- \sum_{i = 1}^{N-2} \frac{k}{\ell_N - m_i -\theta} - \sum_{i = 1}^{N-1}\sum_{j = 1}^{k} \frac{\theta - 1}{\ell_N - \ell_i - j}\right) \leq $$
$$\leq \frac{\Gamma(\theta+k)\Gamma(1)}{\Gamma(1+k)\Gamma(\theta)}\exp\left(- \sum_{i = 1}^{N-2} \frac{k}{\ell_N - m_i -\theta} - \sum_{i = 1}^{N-1}\sum_{j = 1}^{k} \frac{\theta - 1}{\ell_N - \ell_i }\right),$$
where in the last inequality we used that $\theta \geq 1$. The above and (\ref{S3eq8}) imply that if $N$ is large enough and $(\ell_1,...,\ell_N,m_1,...,m_{N-2}) \in D(N)$, then for all $k\geq 0$ we have
$$f_N(0) \left(\frac{\sqrt{t}}{1 + \sqrt{t}} + \epsilon \right)^{k}\frac{\Gamma(\theta+k)}{\Gamma(\theta)\Gamma(1+k)}\geq{f_N(k)}.$$
Summing over $k$ and using the well-known identity $\displaystyle{\sum_{k = 0}^{\infty}x^{k}\frac{\Gamma(k+\theta)}{\Gamma(\theta)\Gamma(1+k)}=\left(1-x \right)^{-\theta}}$, which holds for $|x| < 1$, we get
$$ f_N(0) \left(1-\frac{\sqrt{t}}{1+\sqrt{t}}-\epsilon \right)^{-\theta}\geq{1 }, \mbox{ when $(\ell_1,...,\ell_N,m_1,...,m_{N-2}) \in D(N)$ and $N$ is large enough}.$$
 In particular, the above together with (\ref{S3eq7}) implies that $\mathbb{P}(R \geq \left(1-p-\epsilon \right)^{\theta}) = 1$. This shows that $\mathbb{P}(R \geq \left(1-p \right)^{\theta}) = 1$, which combined with our upper bound shows that $\mathbb{P}(R = (1-p)^\theta) = 1$. 
\end{proof}

\subsection{Proof of Theorem 1.1}\label{Section3.3}
We proceed by induction on $k$. The base case $k=1$, was proved in Lemma \ref{KeyLemma1}.

Suppose for $k<m$ the result of the theorem holds. We will prove it for $k=m$. For simplicity of notations we set 
$$\mbox{ $Z_j^N=X_1^{N-j+1}(tN + s;N)-X_1^{N-j}(tN + s;N)$ for $j = 1,...,k$. }$$
Take any $k_1,...,k_m \in\mathbb{Z}_{\geq{0}}$ and observe that
$$\mathbb{P}(Z_1^N=k_1,...,Z^N_{m}= k_{m})=\mathbb{P}(Z^N_{m}=k_{m}|Z^N_1=k_1,...,Z^N_{m-1}= k_{m-1})\mathbb{P}(Z^N_1=k_1,...,Z_{m-1}^N=k_{m-1}).$$
By the inductive hypothesis it is enough to show that 
\begin{equation}\label{S3spec2}
\lim_{N \rightarrow \infty} \mathbb{P}(Z^N_{m}=k_{m}|Z^N_1=k_1,...,Z^N_{m-1}=k_{m-1}) = \displaystyle{p^{k_{m}}(1-p)^{\theta}\frac{\Gamma(k_{m}+\theta)}{\Gamma(k_m+1)\Gamma(\theta)}}.
\end{equation}

Let us fix $\lambda^r \in \mathbb{Y}^r$ for $r = N-m,...,N$ such that $\lambda^{N-m} \prec \cdots \prec \lambda^N$. From Proposition \ref{S2PC} we know that 
\begin{equation}\label{S3T1}
\mathbb{P}( X^N(tN + s; N) = \lambda^N, ...,X^{N-m}(tN + s; N) = \lambda^{N-m} ) = \mathcal{J}^{multi}_{tN + s;m,N}(\lambda^{N-m},..., \lambda^N).
\end{equation}
Let $\mathcal{F}^m_N$ be the $\sigma$-algebra generated by $X_i^{N-j+1}(tN + s; N)$ for $j = 1,...,m$ and $i = 1,...,N-j+1$. Notice that $\mathcal{F}^m_N$ is a finer $\sigma$-algebra than that of $Z^N_1,...,Z^N_{m-1}$. Equations (\ref{S3T1}) and (\ref{S2mult1}) imply that for some positive $c > 0$, depending on $X^{N-m+1}(Nt + s; N)$, we have
$$\mathbb{P}( X^{N-m}(tN + s; N) = \lambda^{N-m} | \mathcal{F}^m_N) = c \cdot J_{\lambda^{N-m}}(1^{N-m})J_{X^{N-m+1}(Nt + s; N)/\lambda^{N-m}}(1).$$
In particular, we see that 
\begin{equation}\label{S3T2}
\mathbb{P}( X^{N-m}(tN + s; N) = \lambda^{N-m} | \mathcal{F}^m_N) = \mathbb{P}( X^{N-m}(tN + s; N) = \lambda^{N-m} | X^{N-m+1}(tN + s; N)).
\end{equation}

By Proposition \ref{S2PC} we know that 
\begin{equation}\label{S3T3}
\mathbb{P}( X^{N-m+1}(tN + s; N) = \lambda, X^{N-m}(tN + s; N) = \mu) = \mathcal{J}^{multi}_{tN + s;N-m,N-m+1}(\mu, \lambda).
\end{equation}
Consequently, we may apply the same arguments as in the proof of Lemma \ref{KeyLemma1} to show that 
\begin{equation}\label{S3T4}
\begin{split}
\mathbb{P}( X_1^{N-m}&(tN + s; N)  - X_1^{N-m+1}(tN + s; N) = k | X^{N-m+1}(tN + s; N))  \xrightarrow{\mathbb{P}} \\
&p^k(1- p)^{-\theta}\frac{\Gamma(k+\theta)}{\Gamma(k+1)\Gamma(\theta)}, \mbox{ for any $k \geq 0$ as $N\rightarrow \infty$.}
\end{split}
\end{equation}
The above statement follows from (\ref{S3Rat}) as well as the Tower Property and Bounded Convergence Theorem for conditional expectation.

Combining (\ref{S3T2}) and (\ref{S3T4}) we conclude that
\begin{equation}\label{S3T5}
\mathbb{P}(  X_1^{N-m}(tN + s; N)  - X_1^{N-m+1}(Nt + s; N) = k | \mathcal{F}^m_N) \xrightarrow{\mathbb{P}} p^k(1- p)^{-\theta}\frac{\Gamma(k+\theta)}{\Gamma(k+1)\Gamma(\theta)}
\end{equation}
for any $k \geq 0$ as $N\rightarrow \infty$. We now take expectation on both sides of (\ref{S3T5}) with respect to $ \mathbb{E}\left[ \cdot | Z^N_1,...,Z^N_{m-1}\right]$ and use the Tower Property for conditional expectation to get
\begin{equation}\label{S3T6}
\mathbb{P}(  Z^N_m = k | Z^N_1,...,Z^N_{m-1}) \xrightarrow{\mathbb{P}} p^k(1- p)^{-\theta}\frac{\Gamma(k+\theta)}{\Gamma(k+1)\Gamma(\theta)},
\end{equation}
for any $k \geq 0$ as $N\rightarrow \infty$. The change of the order of expectation and limits is allowed by the Bounded convergence theorem. Clearly, (\ref{S3T6}) implies (\ref{S3spec2}), which concludes the proof of the induction step. The general result now follows by induction.

\section{Dynamic limit}\label{Section4}
In this section we prove Theorem \ref{Theorem2}. The main argument is presented in Sections \ref{Section4.2} and \ref{Section4.3}. In the section below we supply several results that will be used in the proof. Throughout this section we let $D^k = D([0,\infty), \mathbb{N}_{ 0}^k)$ be the space of right-continuous paths with left limits taking values in $\mathbb{N}_{ 0}^k$ and endow it with the usual Skorohod topology (see e.g. \cite{EK}).

\subsection{Preliminaries for Theorem \ref{Theorem2}}\label{Section4.1}
In this section we consider the dynamics of the top row of $X(s;N), s\geq 0$. The main result of the section is the following.
\begin{proposition}\label{S4MainProp}
Let $X(s;N), s\geq 0$ be as in Definition \ref{mainDef} with $\theta \geq 1$ and fix $t > 0$. Then the sequence of processes $Z_N(s) = X^N_1(tN + s;N) - X_1^N(tN;N)$, $s \in [0,\infty)$ is tight on $D^1$.
\end{proposition}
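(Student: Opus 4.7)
The plan is to reduce tightness of $Z_N$ on $D^1$ to a uniform bound on $\sup_N \mathbb{E}[Z_N(T)]$ for each $T > 0$, and verify this bound via the compensator of the counting process.

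I would first observe that $Z_N$ is non-decreasing, integer-valued, and right-continuous with left limits, has unit jumps, and satisfies $Z_N(0) = 0$. The set of such paths on $[0, T]$ with value $\leq M$ at time $T$ is compact in the Skorohod topology on $D([0,T], \mathbb{N}_0)$, since each such path is determined by its finite list of jump times in $\bigcup_{k=0}^{M} \{(t_1,\ldots,t_k) : 0 \leq t_1 \leq \cdots \leq t_k \leq T\}$. Hence tightness of $Z_N$ on $D([0,T], \mathbb{N}_0)$ reduces to tightness of the scalar marginal $Z_N(T)$ in $\mathbb{N}_0$, and tightness on $D^1$ follows from tightness on $D([0,T], \mathbb{N}_0)$ for each $T > 0$. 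It therefore suffices to show $\sup_N \mathbb{E}[Z_N(T)] < \infty$.

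Next, I would write the semimartingale decomposition $Z_N(s) = M_N(s) + \int_0^s r_N(u)\,du$, where $M_N$ is a local martingale and
\begin{equation*}
r_N(u) = \sum_{j=1}^N q\bigl(X^j_1(tN+u; N)\bigr)\, \mathbf{1}\bigl\{X^j_1(tN+u;N) = X^{j+1}_1(tN+u;N) = \cdots = X^N_1(tN+u;N)\bigr\}
\end{equation*}
is the rate at which $X^N_1$ increases; here $q(\cdot)$ is as in (\ref{S1Rates}), and the sum accounts for both $X^N_1$'s own clock ringing and pushes transmitted through any string of coincident leftmost particles below level $N$. By optional stopping, $\mathbb{E}[Z_N(T)] = \int_0^T \mathbb{E}[r_N(u)]\,du$, so the task reduces to showing $\sup_{N,\, u \in [0,T]} \mathbb{E}[r_N(u)] < \infty$.

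To bound $\mathbb{E}[r_N(u)]$, I would combine two ingredients, both applied at time $tN + u$. First, using (\ref{S1Rates}) together with the asymptotic estimates of Lemma \ref{LSep} (applied not only at level $N$ but also at neighboring levels $j$ close to $N$), the factor $q(X^j_1)$ is bounded by an absolute constant close to $\theta$ on a high-probability event where the top particles are separated on scale $O(N)$. Second, an analog of Theorem \ref{Theorem1} at time $tN + u$ (proved by the same inductive argument used in Section \ref{Section3.3}) implies that the coincidence event $\{X^{N-k}_1 = X^{N-k+1}_1 = \cdots = X^N_1\}$ has probability decaying geometrically in $k$, uniformly in $N$. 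Combining these bounds and summing over $j = N-k$ gives $\mathbb{E}[r_N(u)] \leq C$ uniformly, so $\mathbb{E}[Z_N(T)] \leq CT$, and Markov's inequality yields tightness of $Z_N(T)$.

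The main obstacle is establishing uniformity in $u \in [0, T]$ of the fixed-time estimates in Lemma \ref{LSep} and Proposition \ref{conProp}, and quantifying the geometric tail for the coincidence events uniformly in $N$. The first should follow by inspecting the proofs in Section \ref{Section5}, since the limit measure $\mu^t$ in Proposition \ref{empProp} depends continuously on the time parameter $t$ and $t + u/N \to t$ uniformly for $u \in [0,T]$. The second should follow from the proof of Theorem \ref{Theorem1} upgraded from convergence in distribution to a uniform-in-$N$ tail estimate on the joint gaps, combined with a uniform moment bound on $q(X^j_1)$ restricted to the coincidence event.
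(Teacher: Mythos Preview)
The central gap is your first reduction: the set of non-decreasing, unit-jump paths on $[0,T]$ starting at $0$ with terminal value $\le M$ is \emph{not} compact in the $J_1$ Skorohod topology. Take the sequence of paths with two unit jumps at times $\tfrac12\pm\tfrac1n$; it has no $J_1$-convergent subsequence, since any candidate limit would need a single jump of size $2$ at $\tfrac12$, and $J_1$ limits preserve jump sizes. Likewise the paths $\mathbf 1_{[1/n,T]}$ do not converge. Your parametrization by ordered jump-time tuples identifies the set with a disjoint union of \emph{open} simplices, and the map to $D([0,T],\mathbb N_0)$ fails to extend continuously to the closed simplices. So tightness of the scalar $Z_N(T)$ alone does not give tightness in $D^1$; one must also prevent jump times from coalescing.

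This is exactly why the paper checks both conditions of Corollary 3.7.4 in \cite{EK}. Your moment bound $\sup_N\mathbb E[Z_N(T)]<\infty$ covers the first condition, and the paper proves it more simply as Lemma \ref{S4Lemma2}: by Proposition \ref{S2PC} the top row alone has the law of the single-level chain $X^N_{disc}$, so the instantaneous rate at which $X^N_1$ increases is just $\lambda_1(s)$ from (\ref{S4eq2}), with no need for your sum over lower levels or geometric control of coincidence events. What you are missing is the second condition, formulated as (\ref{S4eq8}): the jump times of $Z_N$ in $[0,T]$ are well separated with high probability. The paper supplies this via Lemma \ref{S4Lemma3}, which shows $\sup_{s\in[tN,\,tN+T]}\lambda_1(s)<M$ with probability tending to $1$ for any fixed $M>\theta(1+\sqrt t)/\sqrt t$, and then builds a coupling (a thinning/coloring of the ambient rate-$N\theta$ Poisson clock) under which, on that event, the jumps of $Z_N$ are a subset of the arrivals of a fixed rate-$M$ Poisson process; well-separation for the latter is immediate. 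Note that Lemma \ref{S4Lemma3} already delivers the uniformity in $u\in[0,T]$ that you flagged as an obstacle, by exploiting monotonicity of $s\mapsto X^N_i(s)$ rather than continuity of $\mu^t$ in $t$.
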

The proof of Proposition \ref{S4MainProp} is given in the end of this section and relies on Lemmas \ref{S4Lemma1}, \ref{S4Lemma2} and \ref{S4Lemma3} below. We present Lemma \ref{S4Lemma1} here and postpone its proof until Section \ref{Section5}. 
\begin{lemma}\label{S4Lemma1}
Let $X(s;N), s\geq 0$ be as in Definition \ref{mainDef} with $\theta \geq 1$ and fix $s \geq 0$ and $t > 0$.  If we set $X_i = X^N_i(t;N)$ then for any $N \geq 1$ we have
\begin{equation}\label{S4ObsF}
\mathbb{E} \left[\prod_{i = 2}^N \left( 1 + \frac{\theta}{X_1 - X_i + (i-1) \theta}\right) \right] \leq \mathbb{E}\left[ \frac{X_1 +  \theta  N + 1}{t\theta}\right]
\end{equation}
If we set $\ell_i = X^N_{N-i+1}(tN + s ;N) + \theta \cdot i$ for $i = 1,...,N$ we have
\begin{equation}\label{S4ObsF2}
\prod_{j = 1}^{N-1} \left( 1 +  \frac{\theta}{\ell_N - \ell_j} \right) \xrightarrow{L^1}  \frac{\sqrt{t} + 1}{\sqrt{t} } \mbox{ as $N \rightarrow \infty$}.
\end{equation}
\end{lemma}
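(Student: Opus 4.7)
My plan is to handle the two parts of the lemma in order: the first by a Jack-polynomial identity that reduces to a martingale estimate for the single-level dynamics, and the second by a convergence argument building on Lemma \ref{LSep} and Part 1.

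For (\ref{S4ObsF}), the key observation is that the LHS is controlled pointwise by the single-level rate at which $\lambda_1$ jumps up under $X^N_{disc}$. Denote by $\lambda^+$ the partition obtained from $\lambda$ by appending a box to the first row. Using (\ref{S2eq1}) and (\ref{JackSingle}), a telescoping computation that groups the product $\prod_{k=1}^{\lambda_1}$ appearing in $J_{\lambda^+}(1^N)/J_\lambda(1^N)$ by the values of $\lambda_k'$ yields
\begin{equation*}
q_1(\lambda)\;:=\;\widetilde{J}_{\lambda^+/\lambda}(\mathfrak{r}_1)\frac{J_{\lambda^+}(1^N)}{J_\lambda(1^N)}\;=\;\theta\prod_{i=2}^N\frac{\lambda_1-\lambda_i+i\theta}{\lambda_1-\lambda_i+(i-1)\theta},
\end{equation*}
which by Proposition \ref{JumpRates} is the rate at which $\lambda_1$ increases under the single-level Jack dynamics. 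Since $X_1-X_i+(i-1)\theta=\lambda_1-\lambda_i+(i-1)(\theta+1)$ strictly exceeds $\lambda_1-\lambda_i+(i-1)\theta$ for $i\geq 2$ and $\theta>0$, the LHS of (\ref{S4ObsF}) is pointwise dominated by $q_1(\lambda)/\theta$. Taking expectations and using the martingale identity $\mathbb{E}[\lambda_1(t)]=\int_0^t\mathbb{E}[q_1(\lambda(s))]\,ds$, the claim reduces to
\begin{equation*}
t\cdot\mathbb{E}[q_1(\lambda(t))]\;\leq\;\mathbb{E}[\lambda_1(t)]+\theta N+1.
\end{equation*}
The cleanest route is to show that $s\mapsto\mathbb{E}[q_1(\lambda(s))]$ is non-increasing, i.e., that $q_1$ is a supermartingale for the generator $G$ of $X^N_{disc}$; in that case the bound holds even without the $\theta N+1$ correction. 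Verifying this supermartingale property $(Gq_1)(\lambda)\leq 0$ is the main obstacle, as the generator sum $\sum_j q_j(\lambda)[q_1(\lambda+e_j)-q_1(\lambda)]$ mixes a negative $j=1$ contribution with positive $j\geq 2$ contributions, whose cancellation must be established via Jack Pieri-type identities; the $\theta N+1$ correction on the RHS then naturally accommodates the initial value $q_1(\varnothing)=N\theta$ as a boundary term.

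For (\ref{S4ObsF2}), denote the product by $Y_N$. Using $|\log(1+x)-x|\leq x^2/2$ for $x\geq 0$ together with Lemma \ref{LSep}, which yields both $\sum_j 1/(\ell_N-\ell_j)\xrightarrow{\mathbb{P}}\theta^{-1}\log(1+t^{-1/2})$ and $\sum_j 1/(\ell_N-\ell_j)^2\xrightarrow{\mathbb{P}}0$ (the square-sum estimate being recorded in the proof of Lemma \ref{LSep}, see (\ref{S3eq2})), one obtains $\log Y_N\xrightarrow{\mathbb{P}}\log((\sqrt{t}+1)/\sqrt{t})$ and hence $Y_N\xrightarrow{\mathbb{P}}(\sqrt{t}+1)/\sqrt{t}$. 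To upgrade to $L^1$ convergence: applying Part 1 at time $\tau=tN+s$ combined with (\ref{S3Right}) gives $\limsup_N\mathbb{E}[Y_N]\leq(1+\sqrt{t})^2/t$, so $\{Y_N\}_N$ is bounded in $L^1$, while Fatou's lemma gives $\liminf_N\mathbb{E}[Y_N]\geq(\sqrt{t}+1)/\sqrt{t}$. The residual gap is closed by exploiting (\ref{ObsF}): writing $B_N$ for the product appearing there, one verifies (as in the proof of Lemma \ref{LSep}) that $B_N\in[0,1]$, $Y_NB_N\in(0,1]$, and both converge in probability to the positive constants $t/(\sqrt{t}+1)^2$ and $\sqrt{t}/(\sqrt{t}+1)$ respectively, so bounded convergence gives $\mathbb{E}[B_N]\to t/(\sqrt{t}+1)^2$ and $\mathbb{E}[Y_NB_N]\to\sqrt{t}/(\sqrt{t}+1)$. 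A truncation argument splitting on $\{B_N\geq\epsilon\}$, where $Y_N\leq 1/\epsilon$ and bounded convergence applies directly, combined with the $L^1$ bound on $Y_N$ from Part 1 to control the tail contribution on $\{B_N<\epsilon\}$ and then letting $\epsilon\downarrow 0$, delivers $\mathbb{E}[Y_N]\to(\sqrt{t}+1)/\sqrt{t}$. Together with the convergence in probability, this yields $L^1$ convergence by Scheffé's lemma.
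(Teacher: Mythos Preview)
Your proposal has genuine gaps in both parts.

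\textbf{Part 1.} First, a minor confusion: in this lemma $X_i = X_i^N(t;N) = \lambda_i$ (the top row of the process \emph{is} the partition), so $X_1 - X_i + (i-1)\theta = \lambda_1 - \lambda_i + (i-1)\theta$, not $(i-1)(\theta+1)$. Thus the left side of (\ref{S4ObsF}) is \emph{equal} to $q_1(\lambda)/\theta$, not merely dominated by it (compare (\ref{S4eq2})). This is not fatal, but the main problem is what follows: you reduce the claim to showing that $s\mapsto \mathbb{E}[q_1(\lambda(s))]$ is non-increasing, call this ``the main obstacle'', and then do not prove it. Hand-waving about ``Jack Pieri-type identities'' is not a proof, and it is not at all obvious that $q_1$ is a supermartingale for the single-level generator. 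The paper takes a completely different and much shorter route: via the discrete $\beta$-ensemble identification it proves by a one-line summation-by-parts (Lemma~\ref{S5Lemma}) the exact identity
\[
\mathbb{E}\left[\prod_{j=1}^{N-1}\Big(1+\tfrac{1}{\ell_N-\ell_j}\Big)\Big(1+\tfrac{2\theta-1}{\ell_N-\ell_j+\theta-1}\Big)\right]
=\mathbb{E}\left[\tfrac{\ell_N}{s\theta}\,{\bf 1}_{\{\ell_N>\ell_{N-1}+\theta\}}\right],
\]
and then observes the elementary pointwise inequality $\prod_j(1+\tfrac{\theta}{\ell_N-\ell_j})\le \prod_j(1+\tfrac{1}{\ell_N-\ell_j})(1+\tfrac{2\theta-1}{\ell_N-\ell_j+\theta-1})$ for $\theta\ge 1$. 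No dynamical argument is needed.

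\textbf{Part 2.} The convergence-in-probability step is fine and matches the paper. But your $L^1$ upgrade does not close: on the event $\{B_N<\epsilon\}$ you invoke ``the $L^1$ bound on $Y_N$ from Part~1'', yet mere $L^1$ boundedness of $Y_N$ together with $\mathbb{P}(B_N<\epsilon)\to 0$ does \emph{not} force $\mathbb{E}[Y_N{\bf 1}_{\{B_N<\epsilon\}}]\to 0$; you need uniform integrability of $\{Y_N\}$, which Part~1 only gives with the too-large upper bound $(1+\sqrt{t})^2/t>(\sqrt t+1)/\sqrt t$. The paper closes this gap precisely by using the \emph{exact} identity above a second time: since the right side converges (by Theorem~\ref{thmRight}) to $(1+\sqrt t)^2/t$ and the larger product converges in probability to the same constant, the larger product converges in $L^1$. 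Since $0\le Y_N$ is dominated by this $L^1$-convergent sequence, the Generalized Dominated Convergence Theorem yields $Y_N\xrightarrow{L^1}(\sqrt t+1)/\sqrt t$. The identity from Lemma~\ref{S5Lemma} is thus the single missing ingredient that resolves both parts.
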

Lemmas \ref{S4Lemma2} and \ref{S4Lemma3} provide asymptotic statements for the process $X^N_{disc}(s), s\geq 0$. The latter process was defined in Section \ref{Section2.2} and we recall it is a continuous time Markov chain on $\mathbb{Y}^N$ with jump rates given in Proposition \ref{JumpRates}. The process implicitly depends on a parameter $\theta$, that we will assume to satisfy $\theta \geq 1$. The reason we are interested in $X^N_{disc}(s), s\geq 0$ is that by Proposition \ref{S2PC} it has the same law as top row of $X(s;N), s\geq 0$. In what follows we give two equivalent descriptions of $X^N_{disc}(s), s\geq 0$. Depending on the situation we will switch from one formulation to the other. For brevity we will write $X^N(s),s\geq 0$ for $X^N_{disc}(s), s\geq 0$.

Set $\nu = X^N$ and $x_i = \nu_i - i$ for $i = 1,...,N$, then the state space consists of ordered sequences $x_1 > x_2 > \cdots > x_N$ of integers. Each particle $x_i$ jumps to the right by $1$ independently of the others according to an exponential clock with rate $\lambda_i = q_{\nu \rightarrow \nu \sqcup (i, \nu_i + 1)}$ if $\nu \sqcup (i, \nu_i + 1) \in \mathbb{Y}^N$ and $0$ otherwise (these rates are given in (\ref{eqJR})). In particular, the jump rate is non-zero only if the position to the right of a particle is unoccupied. We remark that the above particle dynamics has global interactions, as the jump rate of each particle is influenced by the position of all other particles.

The second dynamics we formulate is a consequence of Proposition \ref{poisson}, which states that for any $\nu \in \mathbb{Y}^N$,
$ \sum_{\mu = \nu \sqcup \square} q_{\nu \rightarrow \mu} = N \theta.$ The latter implies that if $\lambda_i = q_{\nu \rightarrow \nu \sqcup (i, \nu_i + 1)}$ for $\nu \sqcup (i, \nu_i + 1) \in \mathbb{Y}^N$ and $0$ otherwise, then $p_i = \lambda_i/ N\theta$ defines a probability distribution $\mathbb{P}^{\nu}$ on $\{1,...,N\}$. Let $\nu(n) = Y_1(n) \geq Y_2 \geq \cdots \geq Y_N(n)$, $n \geq 0$ be the discrete time Markov chain on $\mathbb{Y}^N$, where at each time $n$ we sample $i$ from $\{1,...,N\}$ according to $\mathbb{P}^{\nu(n)}$ and increase $Y_i$ by $1$. If $M_s$ is a Poisson process on $\mathbb{R}_+$ with intensity $N\theta$, which is independent of $\nu(n)$, $n \geq 0$, and $\nu(0) = \varnothing$, then one readily observes that the process $\nu(M_s) = Y_1(M_s) \geq Y_2(M_s) \geq \cdots \geq Y_N(M_s)$, $s \geq 0$ has the same law as  $X^N(s), s\geq 0.$\\

We now state and prove Lemmas \ref{S4Lemma2} and \ref{S4Lemma3}.
\begin{lemma}\label{S4Lemma2}
Fix $T_0, T_1 > 0$, $\theta \geq 1$ and let $X^N(s), s\geq 0$ be as above. We can find a constant $C > 0$ depending on $\theta$, $T_0$ and $T_1$, such that for any $\Delta \in [0,1]$ and $s \in [0, T_1]$ we have
\begin{equation}
\limsup_{N \rightarrow \infty} \mathbb{E} \left[ X^N_1(T_0N + s + \Delta) - X_1^N(T_0N + s) \right] \leq C\Delta.
\end{equation}
\end{lemma}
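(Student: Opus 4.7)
My plan is to exploit the fact that $X_1^N(s)$ is nondecreasing with jumps of size exactly $+1$, so its expected increment over $[T_0N+s,T_0N+s+\Delta]$ is the expected number of jumps in that interval. By Dynkin's formula (equivalently, by the fact that the compensator of a counting process is the integral of its intensity), this in turn equals
$$\mathbb{E}\!\left[X_1^N(T_0N+s+\Delta)-X_1^N(T_0N+s)\right]=\int_{T_0N+s}^{T_0N+s+\Delta}\mathbb{E}\!\left[q_N(X^N(u))\right]du,$$
where $q_N(\lambda)$ denotes the instantaneous rate at which $\lambda_1$ increases by one in the single-row Markov chain $X^N_{disc}$. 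Thus it suffices to exhibit a bound $\mathbb{E}[q_N(X^N(u))]\leq C$ for all large $N$ and all $u\in[T_0N,T_0N+T_1+1]$, with $C$ depending only on $\theta$, $T_0$, $T_1$.

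Next I would identify $q_N$ explicitly. Using the formula in Proposition \ref{JumpRates} together with the evaluations (\ref{S2eq1}) and (\ref{JackSingle}), a short telescoping calculation — organized in the coordinates $\ell_i=X^N_{N-i+1}+\theta i$ — yields
$$q_N(\lambda)=\theta\prod_{i=2}^{N}\left(1+\frac{\theta}{X_1^N-X_i^N+(i-1)\theta}\right)=\theta\prod_{j=1}^{N-1}\left(1+\frac{\theta}{\ell_N-\ell_j}\right).$$
In particular $q_N$ is exactly $\theta$ times the observable controlled by (\ref{S4ObsF}) in Lemma \ref{S4Lemma1}. Applying that lemma at time $u$ gives
$$\mathbb{E}[q_N(X^N(u))]\leq\frac{\mathbb{E}[X_1^N(u)]+\theta N+1}{u}=\frac{\mathbb{E}[\ell_N(u)]+1}{u}.$$

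Finally I would control the right-hand side on the range of interest. Since $\ell_N$ is nondecreasing in time, $\mathbb{E}[\ell_N(u)]\leq\mathbb{E}[\ell_N(T_0N+T_1+1)]$ for all $u\leq T_0N+T_1+1$, and by Proposition \ref{conProp} (specifically the $L^1$ convergence in (\ref{S3Right}), applied at time $(T_0+(T_1+1)/N)N$) the latter expectation is bounded by $C_1 N$ for some constant $C_1=C_1(\theta,T_0,T_1)$ and all sufficiently large $N$. Combining this with $u\geq T_0 N$ produces a uniform bound $\mathbb{E}[q_N(X^N(u))]\leq C$ for all large $N$, and integrating over an interval of length $\Delta$ yields the claimed inequality after taking $\limsup_{N\to\infty}$.

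The main technical obstacle is the explicit computation of $q_N$: one has to unpack the ratio $J_{\mu}(1^N)/J_\lambda(1^N)$ with $\mu=\lambda\sqcup(1,\lambda_1+1)$ as a product over the boxes in row one of $\lambda$, and then reorganize the result (which a priori involves leg lengths through the conjugate partition $\lambda'$) into a telescoping product indexed by the other particles, so as to match the observable in Lemma \ref{S4Lemma1}. A minor additional point is justifying the exchange of expectation and the time integral, but this is immediate from nonnegativity of $q_N$ and Tonelli's theorem once the above uniform bound is in hand.
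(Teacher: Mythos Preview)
Your proposal is correct and uses the same key ingredients as the paper's proof: the explicit jump rate formula (which the paper records as (\ref{S4eq2})), the expectation bound (\ref{S4ObsF}) from Lemma \ref{S4Lemma1}, monotonicity of $X_1^N$, and the $L^1$ convergence (\ref{S3Right}). The difference is purely in how the expected increment is linked to the integrated jump rate. You invoke the compensator identity (Dynkin's formula) directly to write $\mathbb{E}[X_1^N(b)-X_1^N(a)]=\int_a^b \mathbb{E}[q_N(X^N(u))]\,du$ and then bound the integrand uniformly. The paper instead discretizes the interval into $N^3$ subintervals of length $\Delta/N^3$, uses the Poisson-clock representation (so that on each subinterval the total number of jumps $B_r$ is Poisson with tiny mean), conditions on $B_r\in\{0,1,\geq 2\}$, and controls the $B_r\geq 2$ contribution as $O(N^{-4})$ before summing. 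This is effectively a hands-on derivation of the compensator identity via Riemann sums with explicit error control; your route is shorter and avoids the discretization bookkeeping, while the paper's argument has the minor advantage of staying entirely within elementary Poisson estimates and not appealing to Dynkin for an unbounded observable.
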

\begin{proof}
Let $\Delta \in [0,1]$,  and $s \in [0,T_1]$ be given and set $\epsilon = \Delta/ N^3$. Denote $M_t = X^N_1(t) + \cdots + X_N^N(t)$. For $r = 1,...,N^3$ we define
$$t_r= s + r\epsilon, \hspace{5mm} A_r=  X^N_1(T_0N + t_r) - X_1^N(T_0N + t_{r-1}), \hspace{5mm}B_r =  M_{T_0N + t_r} -  M_{T_0N + t_{r-1}}.$$ 
 Let $\lambda_1(t)$ denote the jump rate of the rightmost particle $X^N_1(t)$, which by (\ref{eqJR}) equals
\begin{equation}\label{S4eq2}
\lambda_1 (t) = \theta \cdot \prod_{i = 2}^N \left( 1 + \frac{\theta}{X^N_1(t) - X^N_i(t) + \theta(i - 1)}\right).
\end{equation}
In view of our second dynamic formulation (see the discussion before the statement of the lemma) we have
$$\mathbb{E}[A_r] = \mathbb{E}[A_r | B_r= 0]\cdot\mathbb{P}(B_r = 0)  + \mathbb{E}[A_r | B_r = 1]\cdot\mathbb{P}(B_r = 1) + \mathbb{E}[A_r | B_r  \geq 2]\cdot\mathbb{P}(B_r \geq 2) $$
$$ = \mathbb{E}\left[\frac{\lambda_1(t_{r-1})}{N \theta} \right]\cdot \mathbb{P}(B_r = 1) +\mathbb{E}[A_r | B_r  \geq 2]\cdot\mathbb{P}(B_r \geq 2).$$
Since $B_r$ is a Poisson random variable with parameter $\theta N \epsilon = \Delta/N^2 \leq 1/N^2$ and $A_r \leq B_r$, we have that
\begin{itemize}
\item $\mathbb{P}(B_r = 1)  = \exp\left( -\frac{\Delta}{N^2}\right) \cdot \frac{\Delta}{N^2} \leq \frac{\Delta}{N^2} \mbox{ and }$
\item $\mathbb{E}[A_r | B_r  \geq 2]\cdot\mathbb{P}(B_r \geq 2) \leq  \mathbb{E}[B_r | B_r  \geq 2]\cdot\mathbb{P}(B_r \geq 2) = \mathbb{E}[B_r] - \mathbb{P}(B_r = 1) = \frac{\Delta}{N^2}\cdot \left( 1 - \exp\left( - \frac{\Delta}{N^2}\right)\right) \leq \frac{2}{N^4}$, with the latter inequality true for all large $N$.
\end{itemize}
The above inequalities show that 
\begin{equation}\label{S4L2eq1}
\mathbb{E}[A_r] \leq \frac{\Delta}{N^3 \theta} \cdot  \mathbb{E}\left[\lambda_1(t_{r-1}) \right] + \frac{2}{N^4}.
\end{equation}

From Proposition \ref{S2PC} we know that $X^N(s;N)$ and $X^N(s)$ have the same law. Consequently, we may apply (\ref{S4ObsF}) to conclude
\begin{equation}\label{S4L2eq2}
 \mathbb{E}\left[\lambda_1(t_{r-1}) \right] \leq \theta \cdot \mathbb{E}\left[\frac{X_1^N(T_0N + t_{r-1}) + \theta N + 1}{t_{r-1} \theta }\right] \leq  \theta \cdot \mathbb{E}\left[\frac{X_1^N(T_0N + T_1 + 1) + \theta N + 1}{T_0 N \theta }\right].
\end{equation}
In the second inequality we used that $X_1^N(s), s\geq 0$ is an increasing process and that $t_{r-1} \in [T_0N, T_0 + T_1 + 1]$.

It follows from (\ref{S4L2eq1}) and (\ref{S4L2eq2}) that
$$\limsup_{N\rightarrow \infty} \mathbb{E} \left[ X^N_1(T_0N + s + \Delta;N) - X_1^N(T_0N + s;N) \right] = \limsup_{N\rightarrow \infty} \sum_{r = 1}^{N^3} \mathbb{E}[A_r] \leq $$
$$\leq  \limsup_{N\rightarrow \infty} \sum_{r = 1}^{N^3} \left(  \frac{\Delta}{N^3 } \cdot \mathbb{E}\left[\frac{X_1^N(T_0N + T_1 + 1) + \theta N}{T_0 N \theta }\right] + \frac{2}{N^4} \right) = \Delta \frac{(1 + \sqrt{T_0})^2}{T_0}.$$
In obtaining the last equality we used that $X^N(s;N)$ and $X^N(s)$ have the same law by Proposition \ref{S2PC} and (\ref{S3Right}).
\end{proof}

\begin{lemma}\label{S4Lemma3}
Fix $T_0, T_1 > 0$, $\theta \geq 1$ and let $X^N(s), s\geq 0$ be as above. Define
\begin{equation}\label{S4eq3}
\lambda_1(s) = \theta \cdot \prod_{i = 2}^N \left( 1 + \frac{\theta}{X^N_1(s) - X^N_i(s) + \theta(i - 1)}\right).
\end{equation}
If $M > \theta\cdot \frac{1 + \sqrt{T_0}}{\sqrt{T_0}}$ then 
\begin{equation}\label{S4eq10}
\lim_{N \rightarrow \infty} \mathbb{P} \left( \lambda_1( s) < M \mbox{ for } s \in [T_0N, T_0N + T_1] \right) = 1.
\end{equation}
\end{lemma}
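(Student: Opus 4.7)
Pick $M' \in (\theta(1+\sqrt{T_0})/\sqrt{T_0},\, M)$. The strategy is a discretization argument: reduce uniform control of $\lambda_1$ on $[T_0N, T_0N+T_1]$ to pointwise control at finitely many times plus a fluctuation bound within each sub-interval.

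For each fixed $s_0 \geq 0$, Lemma \ref{S4Lemma1}(\ref{S4ObsF2}) applied with $t = T_0$ and $s = s_0$ identifies
\[
\frac{\lambda_1(T_0N+s_0)}{\theta} \;=\; \prod_{j=1}^{N-1}\!\left(1+\frac{\theta}{\ell_N-\ell_j}\right) \;\xrightarrow{L^1}\; \frac{1+\sqrt{T_0}}{\sqrt{T_0}} \quad \text{as } N\to\infty.
\]
Fixing $\delta > 0$ (to be chosen) and $s_k = k\delta$ for $k=0,\ldots, K=\lceil T_1/\delta\rceil$, a union bound over the finitely many discretization points gives
\[
\mathbb{P}\bigl(\lambda_1(T_0N+s_k) \leq M' \text{ for all } 0 \leq k \leq K\bigr) \longrightarrow 1 \quad \text{as } N\to\infty.
\]

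Next, I would exploit the dynamical structure of $\lambda_1$. Inspecting (\ref{S4eq3}), the process is piecewise constant: it decreases only when $X_1^N$ jumps (all $d_i := X_1^N - X_i^N + \theta(i-1)$ increase by $1$), and is multiplied by $1 + \theta/((d_i-1)(d_i+\theta))$ when some $X_i^N$ with $i\geq 2$ jumps. Crucially, since $X_i^N$ can jump only if $X_{i-1}^N > X_i^N$, at the moment of jumping one has $d_i \geq (i-1)\theta + 1$, so each such factor is at most $1 + 1/(\theta(i-1)^2)$. Consequently, on each sub-interval $[s_k,s_{k+1}]$,
\[
\sup_{s\in[s_k,s_{k+1}]} \lambda_1(T_0N+s) \;\leq\; \lambda_1(T_0N+s_k) \cdot \Pi_k, \qquad \log \Pi_k \;\leq\; \sum_{i=2}^N \frac{J_i^{(k)}}{\theta(i-1)^2},
\]
where $J_i^{(k)}$ denotes the number of jumps of $X_i^N$ in $[T_0N+s_k,\, T_0N+s_{k+1}]$.

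By the compensator formula for pure-jump Markov processes, $\mathbb{E}[J_i^{(k)}] = \mathbb{E}\!\left[\int_{s_k}^{s_{k+1}} \lambda_i(T_0N+u)\, du\right]$, so
\[
\mathbb{E}[\log \Pi_k] \;\leq\; \delta \cdot \sup_{u\in[0,T_1]} \mathbb{E}\!\left[\sum_{i=2}^N \frac{\lambda_i(T_0N+u)}{\theta(i-1)^2}\right].
\]
The principal obstacle is to show the right-hand expectation is $O(1)$ uniformly in $N$. Combining Proposition \ref{S2PC} (identifying the fixed-time law with $\mathcal{J}_{1^N;\mathfrak{r}_{T_0N+u}}$), the constraint $\sum_i \lambda_i = N\theta$ from Proposition \ref{poisson} (so that $\mathbb{E}[\lambda_i]$ averages to $\theta$), the explicit rate formula in Proposition \ref{JumpRates}, and the empirical measure convergence of Proposition \ref{empProp}, one expects the weighted sum to converge to an integral of the form $\int_0^{b_{T_0}} \cdot \, d\mu^{T_0}$, which is finite due to the $1/(i-1)^2$ decay. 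Granting this bound, Markov's inequality yields $\Pi_k < M/M'$ with probability $1 - o_\delta(1)$ uniformly in $k$; choosing $\delta$ sufficiently small and applying the union bound across $k = 0, \ldots, K-1$ combined with the pointwise control completes the proof.
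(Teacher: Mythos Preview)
Your discretize-and-fluctuate strategy differs from the paper's proof and, as written, contains a genuine gap at the last step. Even granting the $O(1)$ bound on $\sup_u \mathbb{E}\bigl[\sum_{i\geq 2}\lambda_i(T_0N+u)/(\theta(i-1)^2)\bigr]$ (which you do not actually prove), Markov's inequality gives only $\mathbb{P}(\log\Pi_k > \log(M/M')) \leq C\delta/\log(M/M')$. You then union bound over $K \asymp T_1/\delta$ intervals, yielding a total failure probability of order $T_1 C/\log(M/M')$, which is a \emph{fixed constant} independent of $\delta$ and of $N$. Shrinking $\delta$ does not help: it produces more intervals at exactly the rate the per-interval error shrinks. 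To rescue this route you would need either a concentration bound strictly stronger than Markov (e.g.\ exponential tails for $\log\Pi_k$, which the available ingredients do not give), or to show the weighted expectation is $o(1)$ in $N$, which it is not.

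The paper bypasses both the ``principal obstacle'' and the union-bound issue with a monotonicity trick requiring no discretization. Observe that each $X_i^N(\cdot)$ is nondecreasing, so for any $s\in[0,T_1]$ and $i\geq 2$,
\[
X_1^N(T_0N+s) - X_i^N(T_0N+s) \;\geq\; X_1^N(T_0N) - X_i^N(T_0N+T_1).
\]
By Lemma~\ref{S4Lemma2} and Chebyshev, there is a fixed $K_0$ so that $X_1^N(T_0N+T_1)-X_1^N(T_0N)\leq K_0$ with probability $\geq 1-\epsilon/2$; on that event the right-hand side is at least $X_1^N(T_0N+T_1)-X_i^N(T_0N+T_1)-K_0$. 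Hence, on this high-probability event,
\[
\sup_{s\in[0,T_1]}\lambda_1(T_0N+s) \;\leq\; \theta\prod_{i=2}^N\Bigl(1+\frac{\theta}{X_1^N(T_0N+T_1)-X_i^N(T_0N+T_1)+\theta(i-1)-K_0}\Bigr),
\]
and the right-hand side is a \emph{single-time} quantity which, by the same reasoning as Lemma~\ref{LSep} (with the harmless shift $K_0$), converges in probability to $\theta(1+\sqrt{T_0})/\sqrt{T_0}<M$. One also needs $X_1^N(T_0N)-X_2^N(T_0N+T_1)\to\infty$ to ensure the shifted denominators stay positive, which follows from Lemmas~\ref{LSep} and~\ref{S4Lemma2}. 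This reduces uniform control to a single limit and a single high-probability event, avoiding any summation over sub-intervals.
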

\begin{proof}
Let $\epsilon > 0$ be given. We know we have the following convergence statements
\begin{equation}\label{S4L3eq1}
\begin{split}
&\mbox{1. } \prod_{i = 2}^N \left( 1 + \frac{\theta}{X^N_1(s) - X^N_i(s) + \theta(i - 1) - K}\right)\xrightarrow{\mathbb{P}}   \frac{1 + \sqrt{T_0}}{\sqrt{T_0}} \mbox{ as $N \rightarrow \infty$, for fixed $s,K \geq 0$;}\\
&\mbox{2. }X^N_1(T_0N) -X^N_2(T_0N +T_1)  \xrightarrow{\mathbb{P}} \infty \mbox{ as $N \rightarrow \infty$};\\
&\mbox{3. }\mbox{ There exists a $K_0 \in \mathbb{N}$, such that } \limsup_{N \rightarrow \infty} \mathbb{P}( X^N_1(T_0N + T_1) -X^N_1(T_0N) \geq K_0) < \epsilon/2. \\
\end{split}
\end{equation}
From Proposition \ref{S2PC} we know that $X^N(s;N)$ and $X^N(s)$ have the same law. Consequently, the first statement above follow from Lemma \ref{LSep} and the inequalities $e^{-x - x^2} \leq 1 - x \leq e^{-x + x^2}$, which hold for $x \in [0, 0.2]$. The second statement follows from Lemmas \ref{LSep} and \ref{S4Lemma2}.  The final statement is a consequence of Chebyshev's inequality and Lemma \ref{S4Lemma2}.\\

Fix the event $A_N = \left \{X^N_1(T_0N ) > 2K_0 + X^N_2(T_0N + T_1),X^N_1(T_0N + T_1) -X^N_1(T_0N) \leq K_0  \right\}$.
Since $X^N_i(T_0N + s)$ increases in $s$, we see that on $A_N$ we have for $s \in [0, T_1]$ and $i \in \{2,...,N\}$
$$   \left( 1 + \frac{\theta}{X^N_1(T_0N + s) -X^N_i(T_0N + s) + \theta(i - 1) }\right) \leq \left( 1 + \frac{\theta}{X^N_1(T_0N ) -X^N_i(T_0N + T_1) + \theta(i - 1)  }\right) $$
$$ \leq    \left( 1 + \frac{\theta}{X^N_1(T_0N + T_1 ) -X^N_i(T_0N + T_1) + \theta(i - 1)  - K_0  }\right). $$
Taking the product over $i = \{2,...,N\}$ above we conclude that on $A_N$
$$\lambda_1(s)  \leq \theta \cdot \prod_{i = 2}^N\left( 1 + \frac{\theta}{X^N_1(T_0N + T_1 ) -X^N_i(T_0N + T_1) + \theta(i - 1)  - K_0  }\right).$$
From 1. in (\ref{S4L3eq1}) we have that the quantity on the right above converges to $\theta \cdot \frac{1 + \sqrt{T_0}}{\sqrt{T_0}}$ in probability, which is less than $M$. We thus conclude that
$$\liminf_{N \rightarrow \infty} \mathbb{P} \left( \lambda_1(s) < M \mbox{ for } s \in [T_0N, T_0N + T_1] \right) \geq \liminf_{N \rightarrow \infty} \left( 1 - \mathbb{P}(A_N^c) \right) = 1- \limsup_{N\rightarrow \infty}\mathbb{P}(A_N^c) .$$
It follows from 2. and 3. in (\ref{S4L3eq1}) that $\limsup_{N\rightarrow \infty}\mathbb{P}(A_N^c)  \leq \epsilon/2$ and so we conclude that 
$$\liminf_{N \rightarrow \infty} \mathbb{P} \left( \lambda_1( s) < M \mbox{ for } s \in [T_0N, T_0N + T_1] \right) > 1- \epsilon.$$
As $\epsilon > 0$ was arbitrary the statement of the lemma follows.
\end{proof}

\begin{proof}(Proposition \ref{S4MainProp})
We verify the necessary and sufficient conditions for tightness from Corollary 3.7.4 in \cite{EK}. Firstly, we note that for any $s \geq 0$, $Z_N(s)$ are tight on $\mathbb{R}$ because $Z_N(s) \geq 0$ and by Lemma \ref{S4Lemma2}, the expectations of these variables are uniformly bounded by a constant. The latter verifies the first condition of Corollary 3.7.4 in \cite{EK}.

Because $Z_N(s)$ is a counting process (it is increasing, pure-jump and has unit jump sizes) the second condition reduces to showing that for any $\eta > 0$ and $T > 0$ there exists a $\delta > 0$ such that
\begin{equation}\label{S4eq8}
\limsup_{N \rightarrow \infty} \mathbb{P} \left( \min_{i} \left[  T^N_{i+1} - T^N_i \right] \leq \delta \right) \leq \eta, 
\end{equation}
where $T^N_1 < T^N_2 <...$ are the jump times of $Z_N$ in $[0,T]$. Informally, the meaning of (\ref{S4eq8}) is that on any compact inverval $[0,T]$ the jump times of $Z_N$ are well-separated with high probability. The reason one expects the jump times of $Z_N$ to be well-separated is that the jump rate for this process at time $s$ has the same law as $\lambda_1(tN + s)$, which is given in (\ref{S4eq3}), and the latter quantity behaves like a constant for all large $N$.\\

In what follows we will construct a Poisson point process $R_N$, which is coupled with $Z_N$, and with high probability contains $T^N_1,T^N_2,...$ as a subset of its own jump times in $[0,T]$. We start by fixing $M <  N\theta$ and considering the process $\nu(M_s) = Y_1(M_s) \geq Y_2(M_s) \geq \cdots \geq Y_N(M_s), s\geq 0$ that was discussed before Lemma \ref{S4Lemma2}. Let $S_1 < S_2 < ...$ be the arrival times of $M_s$ in the interval $[tN, tN + T]$, which we visualize as points on this segment. We now follow these points from left to right and color some of them in red as follows.

We start from $S_1$ and look at $\nu(M_{S_1} - 1)$. Let $\lambda_1(S_1)$ be given by
$$\theta \cdot \prod_{i = 2}^N \left( 1 + \frac{\theta}{Y_1(M_{S_1} - 1)- Y_i(M_{S_1} - 1) + \theta(i - 1)}\right)$$
 and suppose it is less than $M$. Then we color $S_1$ in red if $Y_1(M_{S_1}) - Y_1(M_{S_1} - 1) = 1$. If the latter is not true then we still color the point in red with probability $\frac{M - \lambda_1}{N\theta  - \lambda_1}$. Since $Y_1$ jumps at time $S_1$ precisely with probability $\frac{\lambda_1}{N\theta}$ we conclude that this way we colored $S_1$ in red with probability $M/N\theta$. Afterwards we continue in this fashion until we reach the end of the interval $[tN, tN + T]$ or until we reach some $S_i$ such that $\lambda_1(S_i) > M$. When the latter happens we simply color the point $S_i$ in red with probability $M/N\theta$. Overall, the red points in the interval $[tN, tN + T]$ were obtained by coloring each of the arrival times of $M_s$ independently with probability $M/N\theta$. Thus if $R_N$ denotes the point process on $[0,T]$, which is obtained by shifting the red points to the left by $tN$, we conclude that $R_N$ is a Poisson point process with parameter $M$ (recall that $M_s$ is a Poisson point process with parameter $N\theta$).

Let $U^N_1,U^N_2,...$ be the arrival times for $R_N$ in $[0,T]$. By construction, we know that on the event $E_N^M = \{ \lambda_1(s) < M| s\in  [tN, tN + T] \}$ the set $U^N_1,U^N_2,...$ contains $T^N_1,T^N_2,...$ as a subset. The latter implies that on the event $E^M_N$ we have $\min_{i} \left[  T^N_{i+1} - T^N_i \right] \leq \min_{i} \left[  U^N_{i+1} - U^N_i \right]$ and so we conclude that 
\begin{equation}\label{S4eq9}
\mathbb{P} \left( \min_{i} \left[  T^N_{i+1} - T^N_i \right] \leq \delta \right) \leq \mathbb{P} \left( \min_{i} \left[  U^N_{i+1} - U^N_i \right] \leq \delta \right) + \mathbb{P} \left( (E^M_N)^c \right)
\end{equation}
Fix $M >  \theta \cdot \frac{1 + \sqrt{t}}{\sqrt{t}}$ and notice that as $R_N$ is a Poisson point process with parameter $M$, we can find $\delta > 0$ such that
$$\mathbb{P} \left( \min_{i} \left[  U^N_{i+1} - U^N_i \right] \leq \delta \right)  \leq \eta/2, \mbox{ for all $N \in \mathbb{R}$}.$$
On the other hand we have by Lemma \ref{S4Lemma3} that $\mathbb{P} \left( (E^M_N)^c \right) \rightarrow 0$ as $N \rightarrow \infty$. Combining these estimates with (\ref{S4eq9}) we conclude (\ref{S4eq8}). This proves that $Z_N(s), s\geq 0$ is tight on $D^1$.
\end{proof}

\subsection{One row analysis}\label{Section4.2}
In this section we focus on the top row of $X(tN + s;N), s\geq 0$ and analyze the limiting distribution of the rightmost particle $X^N_1(tN + s;N), s\geq 0$. The main result we will prove is the following.
\begin{proposition}\label{S4Prop1}
Let $X(s;N), s\geq 0$ be as in Definition \ref{mainDef} and fix $t > 0$. Then the sequence of processes $Z_N(s) = X^N_1(tN + s;N) - X_1^N(tN;N)$, $s \in [0,\infty)$ converges in the limit $N \rightarrow \infty$ in law on $D^1$ to the Poisson point process with rate $ q = \theta \cdot \frac{1 + \sqrt{t}}{\sqrt{t}}$.
\end{proposition}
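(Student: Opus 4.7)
The process $s\mapsto X^N_1(tN+s;N)$ is a pure-jump process whose jumps are all of size $+1$ and occur at rate $\lambda_1(\cdot)$ given in (\ref{S4eq3}). Consequently $Z_N$ is a unit-jump counting process with predictable compensator
\begin{equation*}
A_N(s) := \int_0^s \lambda_1(tN+u)\, du,
\end{equation*}
and $M_N(s):=Z_N(s)-A_N(s)$ is a martingale. A standard convergence result for counting processes (in the spirit of the Stroock--Varadhan martingale problem framework, see e.g.\ \cite{EK}) asserts that if $A_N(s)\to A(s)$ in probability for every $s$ in a dense subset of $[0,\infty)$, with $A$ deterministic, continuous and non-decreasing, then any tight sequence of counting processes with these compensators converges in law on $D^1$ to the counting process with compensator $A$. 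For $A(s)=qs$ this limit is precisely the Poisson process of rate $q$. Since tightness of $Z_N$ on $D^1$ has already been established (Proposition \ref{S4MainProp}), the proof reduces to verifying $A_N(s)\xrightarrow{\mathbb{P}}qs$ for each $s\geq 0$.

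For the pointwise compensator convergence, I would invoke (\ref{S4ObsF2}) applied at time $tN+u$ instead of $tN$; rewriting the product using the identity $\ell_N-\ell_{N-i+1}=X_1^N-X_i^N+\theta(i-1)$ shows this is exactly $\lambda_1(tN+u)/\theta$, so
\begin{equation*}
\lambda_1(tN+u)\xrightarrow{L^1}\theta\cdot \frac{1+\sqrt{t}}{\sqrt{t}}=q\quad\text{for each fixed }u\geq 0.
\end{equation*}
The estimate (\ref{S4L2eq2}) inside the proof of Lemma \ref{S4Lemma2} supplies a uniform bound $\mathbb{E}[\lambda_1(tN+u)]\leq C_{t,s}$ valid for all $u\in[0,s]$ and all large $N$. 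Combining this with Fubini and the dominated convergence theorem gives
\begin{equation*}
\mathbb{E}|A_N(s)-qs|\leq \int_0^s \mathbb{E}|\lambda_1(tN+u)-q|\,du \longrightarrow 0,
\end{equation*}
so $A_N(s)\xrightarrow{L^1} qs$ and in particular $A_N(s)\xrightarrow{\mathbb{P}} qs$ for every $s\geq 0$. Since $A_N$ is almost surely non-decreasing in $s$ and the limit $s\mapsto qs$ is continuous, this pointwise convergence upgrades automatically to uniform convergence on compact intervals in probability by a standard monotone-P\'olya argument.

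Paired with the tightness from Proposition \ref{S4MainProp}, the counting-process convergence theorem cited above then identifies the weak limit of $Z_N$ on $D^1$ with the Poisson process of rate $q$, finishing the proof. The main technical step is the uniform-in-$(N,u)$ bound on $\mathbb{E}[\lambda_1(tN+u)]$ needed to legitimize the interchange of limit and $u$-integration; this is not isolated as a separate statement in the excerpt but can be extracted directly from the estimate (\ref{S4L2eq2}) used in the proof of Lemma \ref{S4Lemma2}. Everything else is a routine application of the compensator-convergence recipe for counting-process limits, which works here because both the limiting intensity $q$ and the convergence of $\lambda_1(tN+\cdot)$ are already in hand from Lemma \ref{S4Lemma1}.
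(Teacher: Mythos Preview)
Your proposal is correct and follows essentially the same strategy as the paper. Both arguments combine the tightness from Proposition~\ref{S4MainProp} with a martingale-problem identification of the limit, and both rest on the same two analytic ingredients: the pointwise $L^1$ convergence $\lambda_1(tN+u)\to q$ from (\ref{S4ObsF2}) and the uniform-in-$u$ bound on $\mathbb{E}[\lambda_1(tN+u)]$ extracted from (\ref{S4ObsF}) (your reference to (\ref{S4L2eq2}) is the same estimate), which together justify passing the limit under the $u$-integral.

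The only difference is packaging. The paper verifies the full martingale problem directly: for an arbitrary bounded test function $f$ it checks that $f(Z(u))-\int_0^u q\,[f(Z(v)+1)-f(Z(v))]\,dv$ is a martingale by comparing the pre-limit martingale $M_N$ (built with the true rate $\lambda_1$) to $M_N'$ (built with $q$) and showing their difference $K_N(s,u)\to 0$ in $L^1$. You instead specialize to the compensator $A_N(s)=\int_0^s\lambda_1(tN+u)\,du$ and invoke a black-box counting-process convergence theorem (compensator convergence $+$ tightness $\Rightarrow$ weak convergence, with Watanabe's characterization identifying the Poisson limit). Your route is marginally more economical if one is willing to cite such a result; the paper's route is more self-contained and, as written, reuses the same template verbatim in the multilevel proof of Theorem~\ref{Theorem2}.
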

\begin{proof}
From Proposition \ref{S2PC} we know that $X^N(s;N)$ and $X^N(s)$ (defined in Section \ref{Section4.1}) have the same law and so it suffices to prove the proposition for $X^N(s)$. In particular, we let $Z_N(s) = X^N_1(tN + s) - X_1^N(tN)$, $s \in [0,\infty)$ and prove that the sequence converges in the limit $N \rightarrow \infty$ in law on $D^1$ to the Poisson point process with rate $ q = \theta \cdot \frac{1 + \sqrt{t}}{\sqrt{t}}$.

From Proposition \ref{S4MainProp} we know that $Z_N$ is a tight family on $D^1$. Let $Z$ be any subsequential limit and pick a subsequence $Z_{N_k}$, which converges in law to $Z$. By virtue of the Skorohod Embedding Theorem (see e.g. Theorem 3.5.1 in \cite{Du}) we may assume that all processes involved are defined on the same probability space and that the convergence holds in the almost sure sense. Our goal is to show that $Z$ is the Poisson point process with rate $q$. 

The strategy is to use the Martingale Problem, which characterizes the Poisson process with rate $q$ as the unique process $R_u$ such that $R_0 = 0$ and for every bounded function $f: \mathbb{N}_{ 0} \rightarrow \mathbb{R}$, we have that 
\begin{equation}\label{S4eq12}
M(u): = f(R_u) - \int_0^u q( f(R_s+1) - f(R_s))ds
\end{equation}
is an $\mathcal{F}^R_u = \sigma( R_s, s\in [0,u])$ martingale. The latter result is a special case of Theorem 4.4.1 in \cite{EK}. By uniqueness we mean that if two processes $R^1_u$ and $R^2_u$ with sample paths in $D^1$ satisfy the above condition then they have the same finite dimensional distributions. The latter by Proposition 3.7.1 in \cite{EK} means that the two processes define the same law on $D^1$. Since the Poisson process of rate $q$ clearly satisfies (\ref{S4eq12}), we conclude that it suffices to show that for any bounded function $f: \mathbb{N}_{ 0} \rightarrow \mathbb{R}$ we have that
\begin{equation}\label{eq13}
M(u): = f(Z(u)) - \int_0^u q( f(Z(s)+1) - f(Z(s)))ds
\end{equation}
is an $\mathcal{F}^Z_u$ martingale and $Z(0) = 0$ a.s. The second condition is immediate from $Z_N(0) = 0$ for each $N$ by definition and $Z_N(0) \rightarrow Z(0)$ a.s. by assumption. Since $Z(u)$ is right-continuous and $f$ is bounded we see that $M(u)$ is adapted to $\mathcal{F}^Z_u$ and integrable. The only thing left to check is that for $u \geq s$ one has
\begin{equation}\label{S4eq14}
\mathbb{E}\left[ M(u) - M(s) | \mathcal{F}^Z_s\right] = 0 \iff \mathbb{E}\left[ {\bf 1}_A \cdot (M(u) - M(s))\right] = 0 \mbox{ for all $A \in \mathcal{F}^Z_s$}.
\end{equation}
The collection of sets $A$ that satisfy (\ref{S4eq14}) is a $\lambda$-system, and so if we can prove that (\ref{S4eq14}) holds for sets of the form $A = \{ Z(s_1) \leq a_1,..., Z(s_k) \leq a_k\}$ where $k \in \mathbb{N}$, $a_i \in \mathbb{R}$ and $0 \leq s_1 \leq s_2 \leq ... \leq s_k \leq s$, then by the $\pi-\lambda$ Theorem we will have the statement for all sets $A \in \mathcal{F}^Z_s$. We conclude that what remains to be proved is
\begin{equation}\label{S4eq15}
\mathbb{E}\left[ {\bf 1}_A \cdot (M(u) - M(s))\right] = 0 \mbox{ if $A = \{ Z(s_1) \leq a_1,..., Z(s_k) \leq a_k\}$ and $u \geq s$}.
\end{equation}

Let us introduce the following notation
\begin{equation}
\begin{split}
&A_N = \{ Z_N(s_1) \leq a_1,..., Z_N(s_k) \leq a_k\}, \\
&M_N(u) =  f(Z_N(u)) - \int_0^u \lambda_1(tN + v)( f(Z_N(v)+1) - f(Z_N(v)))dv,\\
&M_N'(u) = f(Z_N(u)) - \int_0^u q( f(Z_N(v)+1) - f(Z_N(v)))dv.
\end{split}
\end{equation}
In the above we have that $\lambda_1(s)$ is given by (\ref{S4eq3}) and is the jump rate of the particle $X^N_1(s), s\geq 0$.
The Martingale Problem for $X^N(s), s\geq 0$ shows that $M_N(u)$ is a martingale with respect to the filtration $\mathcal{F}^{X^N}_{u + tN}$. In particular, we conclude that 
\begin{equation}\label{S4Prop1eq1}
\mathbb{E}\left[ {\bf 1}_{A_N} \cdot (M_N(u) - M_N(s))\right] = 0, \mbox{ when $u \geq s$.}
\end{equation}
By the Bounded Convergence Theorem we have 
\begin{equation}\label{S4Prop1eq2}
 \lim_{k \rightarrow \infty} \mathbb{E} \left[ {\bf 1}_{A_{N_k}} \cdot (M'_{N_k}(u) - M'_{N_k}(s)) - {\bf 1}_A \cdot (M(u) - M(s)) \right] = 0.
\end{equation}
Combining (\ref{S4Prop1eq1}) and (\ref{S4Prop1eq2}) we reduce (\ref{S4eq15}) to showing the following statement for $u \geq s$
\begin{equation}\label{S4Prop1eq3}
\lim_{N \rightarrow \infty} \mathbb{E} \left[ {\bf 1}_{A_N} \cdot K_N(s,u)\right] = 0, \mbox{ where } K_N(s,u) = M'_N(u) - M'_N(s) - M_N(u) + M_N(s).
\end{equation}

We notice that
$$K_N(s,u) = \int_s^u (\lambda_1(tN + v) - q)\cdot( f(Z_N(v)+1) - f(Z_N(v)))dv.$$
Let $F = \sup_{x \in \mathbb{N}_{ 0}} |f(x)|$. Then we have that 
$$\mathbb{E}\left [ | K_N(s,u)| \right] \leq 2F\int_{s}^u  \mathbb{E}\left[| \lambda_1(tN + v) - q | \right]dv.$$
From (\ref{S4ObsF2}) we know that for each $v \geq 0$ we have $\lim_{N \rightarrow \infty}\mathbb{E}\left[| \lambda_1(tN + v) - q | \right] = 0$. On the other hand, we have for $v \in [0,u]$ that
\begin{equation}
\mathbb{E}\left[| \lambda_1(tN + v) - q | \right] \leq q + \mathbb{E}\left[ \lambda_1(tN + v) \right] \leq q +  \mathbb{E}\left[ \frac{X_1^N(tN + u) + \theta N}{\theta t N} \right] \leq C.
\end{equation}
The middle inequality follows from the fact that $X^N(s;N)$ and $X^N(s)$ have the same law by Proposition \ref{S2PC}, coupled with (\ref{S4ObsF}) and the monotonicity of $X_1^N(tN + v)$ for $v \in [0,u]$. The last inequality is a consequence of (\ref{S3Right}). An application of the Bounded Convergence Theorem now reveals that $K_N(s,u) \xrightarrow{L^1} 0$ as $N \rightarrow \infty$. The latter implies equation (\ref{S4Prop1eq3}) and hence the proposition.
\end{proof}

\subsection{Proof of Theorem \ref{Theorem2}}\label{Section4.3}
By Proposition \ref{S2PC} we know that the projection of $X(s;N), s\geq 0$ to the top $k+1$ levels has the same law as $X^{multi}_{N- k,N}(s), s\geq 0$ from Definition \ref{mainDef2}. Consequently, it is enough to prove the theorem for this process. For brevity we denote  $X^{multi}_{N- k,N}(s), s\geq 0$ by $X^j_i(s), s\geq 0$ with $j = N-k,...,N$ and $1 \leq i \leq j$. 
Define the sequence of processes 
\begin{equation}\label{S4eq11}
\begin{split}
Q^N(s) = &\left( Q_1^N(s),...,Q_k^N(s) \right)  = ( X^N_1(tN + s) - X_1^{N-1}(tN + s),X^{N-1}_1(tN + s) -\\ &X_1^{N-2}(tN + s)
,...,X^{N-k+1}_1(tN + s) - X_1^{N-k}(tN+ s)),  \hspace{2mm}s \geq 0.
\end{split}
\end{equation}
 To prove the theorem we want to show that $Q^N(s) $ converges in the limit $N \rightarrow \infty$ in law on $D^k$ to the process $Q(s)$ from Definition \ref{S1DefQ}.\\

We start by showing that $Q^N(s), s\geq 0$ is tight on $D^k$. It suffices to show that for each $i \in \{1,...,k\}$ we have that $Q^N_i(s)$  is tight in $D^1$. Notice that 
$$Q^N(s) = \left(X^{N-i+1}_1(tN + s)  - X^{N-i+1}_1(tN) \right) - $$
$$ - \left(X^{N-i}_1(tN + s)  - X^{N-i}_1(tN) \right) + \left( X^{N-i+1}_1(tN)  - X^{N-i}_1(tN)\right).$$
The first two summands are tight on $D^1$ by Proposition \ref{S4MainProp}, while the last summand is tight by Theorem \ref{Theorem1}. We conclude that $Q^N$ is tight on $D^k$. 

Our strategy for the remainder is to use the Martingale Problem, similarly to our proof of Proposition \ref{S4Prop1}. For a $k$-tuple  $x = (x_1,...,x_k) \in \mathbb{N}_{ 0}^k$ we let
\begin{equation}
\lambda_{ij}(x) =  {\bf 1}_{\{x_{j} > 0\}}  \prod_{r = j + 1}^{i-1} {\bf 1}_{\{x_r = 0\}} \times \theta \cdot \frac{\theta + x_i}{1 + x_i} \mbox{ for $0 \leq j < i \leq k$, and } 
\end{equation}
\begin{equation}
\lambda_{k+1, j}(x) = {\bf 1}_{\{x_{j} > 0\}}  \prod_{r = j + 1}^{i-1} {\bf 1}_{\{x_r = 0\}} \times  \theta \cdot \frac{1 + \sqrt{t}}{\sqrt{t}} \mbox{ for $0 \leq j \leq k$}.
\end{equation}
with the convention that $x_0 > 0$. We also let ${\bf e}_i$ denote the $i$-th standard vector in $\mathbb{R}^k$ and write ${\bf e}_0 = {\bf e}_{k+1}$ for the zero vector.

By definition, the Markov process $Q(s), s \geq 0$ solves the following Martingale Problem. Let $f: \mathbb{N}^{k}_{ 0} \rightarrow \mathbb{R}$ be a bounded function. Then the process
\begin{equation}\label{S4eq22}
M_Q(u): = f(Q(u)) - \int_0^u \sum_{0 \leq j < i \leq k + 1} \lambda_{ij}(Q(v)) \cdot \left[ f(Q(v) + {\bf e}_i - {\bf e}_j) - f(Q(v))\right] dv
\end{equation}
is an $\mathcal{F}^Q_u$ martingale. It follows from Theorem 4.4.1 in \cite{EK} that if $R(u)$ is another process with sample paths in $D^k$, which solves the above Martingale Problem and $R(0)$ has the same distribution as $Q(0)$ then $R$ and $Q$ have the same finite-dimensional distributions.

From our earlier work we know that $Q^N$ form a tight family on $D^k$. Let $R$ be any subsequential limit and pick a sequence $Q_{N_m}$, which converges in law to $R$. By the Skorohod Embedding Theorem (see e.g. Theorem 3.5.1 in \cite{Du}) we may assume that all processes involved are defined on the same probability space and that the convergence holds in the almost sure sense. In addition, from Theorem \ref{Theorem1} we know that $R(0)$ has the same distribution as $Q(0)$. What remains to be shown is that $R(u)$ satisfies the Martingale Problem of (\ref{S4eq22}). 

Let us fix a bounded function $f: \mathbb{N}^{k}_{ 0} \rightarrow \mathbb{R}$ and let $M_R(u)$ be the process of (\ref{S4eq22}) with $Q$ replaced with $R$. Similarly to the proof of Proposition \ref{S4Prop1} we reduce the proof that $M_R(u)$ is a martingale to showing that 
\begin{equation}\label{S4eq23}
\mathbb{E}\left[ {\bf 1}_A \cdot (M_R(u) - M_R(s))\right] = 0 \mbox{ if $A = \{ R(s_1) \in B_1,..., R(s_l) \in B_l\}$},
\end{equation}
where $s_1 \leq s_2 \leq ... \leq s_l\leq s \leq u$ and $B_i \in  \mathcal{B}(\mathbb{R}^k)$ (the Borel $\sigma$-algebra on $\mathbb{R}^k$). 

We introduce the following notation
\begin{equation}
\begin{split}
&A_N = \{ Q_N(s_1) \in B_1,..., Q_N(s_l) \in B_l\}, \\
&M_N(u) =  f(Q^N(u)) - \int_0^u \sum_{0 \leq j < i \leq k + 1} \hspace{-5mm} \lambda^N_{ij}(X(v + tN)) \left[f(Q^N(v) + {\bf e}_i - {\bf e}_j) - f(Q^N(v))\right]dv\\
&M_N'(u) = f(Q^N(u)) - \int_0^u\sum_{0 \leq j < i \leq k + 1}  \hspace{-5mm}\lambda_{ij}(Q^N(v))  \left[ f(Q^N(v) + {\bf e}_i - {\bf e}_j) - f(Q_N(v))\right]dv.
\end{split}
\end{equation}
In the above formula we have
\begin{equation*}
\lambda^N_{\alpha\beta}(\lambda^{N-k},...,\lambda^N) =  \theta\cdot {\bf 1}_{\{\lambda_1^{N-\beta+1}  > \lambda_1^{N-\alpha+1}\}} \hspace{-1mm}  \prod_{r = \beta + 1}^{\alpha-1} \hspace{-1mm} {\bf 1}_{\{\lambda_1^{N-r+1}  = \lambda_1^{N-\alpha+1}\}}   \prod_{j = 2}^{N}  \frac{ 1 - \frac{ \theta - 1}{\lambda^{N-\alpha+1}_1 - \lambda^{N-\alpha+1}_j + (j-1)\theta}}{1 - \frac{ \theta - 1}{\lambda^{N-\alpha+1}_1 - \lambda^{N-\alpha}_{j-1} + (j-1)\theta }} 
\end{equation*}
for $0 \leq \beta < \alpha \leq k+1$ and $(\lambda^N,...,\lambda^{N-k}) \in \mathbb{GT}^{N}_{N-k}$ . We also set
\begin{equation*}
\begin{split}
\lambda^N_{k+1, \beta}(\lambda^{N-k},...,\lambda^N) =\theta \cdot {\bf 1}_{\{\lambda_1^{N-\beta+1}  > \lambda_1^{N-k}\}}  \prod_{r = \beta + 1}^{k} {\bf 1}_{\{\lambda_1^{N-r+1}  = \lambda_1^{N-k}\}}  \times\\
 \prod_{j = 2}^{N-k} \left( 1 + \frac{\theta}{\lambda^{N-k}_1 - \lambda^{N-k}_j + (\theta - 1)(j - 1)}\right),
\end{split}
\end{equation*}
for $\beta = 0,...,k$. In both equations we use the convention that $\lambda^{N+1}_1 = \infty$. The meaining of $\lambda_{ij}^N$ is that it equals the jump rate with which particles $X^{N-i+1}_1,...,X_1^{N-j+1}$ (and no others) jump to the right by $1$. The formulas presented above are obtained from (\ref{Q2}) and Proposition \ref{S2Prop3}. 

The Martingale Problem for $X(s), s\geq 0$ shows that $M_N(u)$ is a martingale with respect to the filtration $\mathcal{F}^{X}_{u + NT_0}$. In particular, we conclude that 
\begin{equation}\label{S4eq24}
\mathbb{E}\left[ {\bf 1}_{A_N} \cdot (M_N(u) - M_N(s))\right] = 0 \mbox{ when $u \geq s$}.
\end{equation}
We notice that by the Bounded Convergence Theorem we have 
\begin{equation}\label{S4eq25}
 \lim_{m \rightarrow \infty} \mathbb{E} \left[ {\bf 1}_{A_{N_m}} \cdot (M'_{N_m}(u) - M'_{N_m}(s)) - {\bf 1}_A \cdot (M(u) - M(s)) \right] = 0.
\end{equation}
Combining (\ref{S4eq24}) and (\ref{S4eq25}) we reduce (\ref{S4eq23}) to showing the following statement for $u \geq s$
\begin{equation}\label{S4eq26}
\lim_{N \rightarrow \infty} \mathbb{E} \left[ {\bf 1}_{A_N} \cdot K_N(s,u)\right] = 0, \mbox{ where } K_N(s,u) = M'_N(u) - M'_N(s) - M_N(u) + M_N(s).
\end{equation}

We notice that
$$K_N(s,u) = \sum_{0 \leq j < i \leq k + 1} \int_s^u(\lambda^N_{ij}(X(v + tN)) -\lambda_{ij}(Q^N(v))   \cdot ( f(Q^N(v) + {\bf e}_i - {\bf e}_j) - f(Q^N(v))))dv.$$
Let $F = \sup_{x \in \mathbb{N}_{ 0}} |f(x)|$. Then we have that 
\begin{equation}\label{S4eq27}
\mathbb{E}\left [ | K_N(s,u)| \right] \leq \sum_{0 \leq j < i \leq k + 1} 2F\int_{s}^u  \mathbb{E}\left[| \lambda^N_{ij}(X(v + tN)) -\lambda_{ij}(Q^N(v))  | \right]dv.
\end{equation}

From Proposition \ref{S2PC} and (\ref{S4ObsF2}) we know that for each $u \geq 0$ we have 
$$\lim_{N \rightarrow \infty}\mathbb{E}\left[ \left| \lambda^N_{k+1,j}(X(u + tN)) -\lambda_{k+1,j}(Q^N(u))  \right| \right]= 0.$$ 
On the other hand we have
\begin{equation*}
\mathbb{E}\left[| \lambda^N_{k+1,j}(X(u + tN)) -\lambda_{k+1,j}(Q^N(u)) | \right] \leq \mathbb{E}\left[ \lambda^N_{k+1,j}(X(u + tN))  \right] + \theta \cdot \frac{1 + \sqrt{t}}{\sqrt{t}}  \leq C.
\end{equation*}
The middle inequality follows from Proposition \ref{S2PC}, coupled with (\ref{S4ObsF}) and the monotonicity of $X_1^{N-k}(tN + v)$ for $v \in [0,u]$. The last inequality is a consequence of (\ref{S3Right}). An application of the Bounded Convergence Theorem now reveals that 
\begin{equation}\label{S4eq28}
\lim_{N \rightarrow \infty} \sum_{0 \leq j  \leq  k } 2F\int_{s}^u  \mathbb{E}\left[| \lambda^N_{k+1, j}(X(v + tN)) -\lambda_{k+1,j}(Q^N(v))  | \right]dv = 0.
\end{equation}
In addition, by combining Proposition \ref{S2PC} and \ref{LSep}, we know that $\lambda^N_{ij}(X(v + tN)) -\lambda_{ij}(Q^N(v)) \xrightarrow{\mathbb{P}} 0$. This together with the Bounded Convergence Theorem shows
\begin{equation}\label{S4eq29}
\lim_{N \rightarrow \infty} \sum_{0 \leq j < i \leq k } 2F\int_{s}^u  \mathbb{E}\left[| \lambda^N_{ij}(X(v + tN)) -\lambda_{ij}(Q^N(v))  | \right]dv.
\end{equation}
Combining (\ref{S4eq27}), (\ref{S4eq28}) and (\ref{S4eq29}) shows that $K_N(s,u) \xrightarrow{L^1} 0$ as $N \rightarrow \infty$. The latter implies equation (\ref{S4eq26}) and hence the theorem.

\section{Asymptotic results for $\mathcal{J}_{1^N; \mathfrak{r}_s}$} \label{Section5}
In this section we prove several asymptotic results about the measure $\mathcal{J}_{1^N; \mathfrak{r}_s}$ from Proposition \ref{propJM}, which were used throughout the text. The key idea, which enables our analysis is that $\mathcal{J}_{1^N; \mathfrak{r}_s}$ can be identified with the discrete $\beta$-ensemble of \cite{BGG}.
\subsection{Discrete $\beta$-ensemble identification}\label{Section5.1}
 We start by giving the definition of the discrete $\beta$-ensemble as in \cite{BGG}.
\begin{definition}\label{S5DefBeta}
Fix $N \in \mathbb{N}$, $2\theta = \beta>0$ and a real-valued function $w(x;N)$.\footnote{ $w(x;N)$ should decay at least as $|x|^{-2\theta( 1 + \epsilon)}$ for some $\epsilon > 0$ as $|x| \rightarrow \infty$.} With the above data we define the discrete $\beta$-ensemble as the probability distribution
\begin{equation}\label{S5beta}
\mathbb{P}_N(\ell_1,...,\ell_N)=\frac{1}{Z_N}\prod_{1\leq{i}<j\leq{N}}\frac{\Gamma(\ell_j-\ell_i+1)\Gamma(\ell_j-\ell_i+\theta)}{\Gamma(\ell_j-\ell_i)\Gamma(\ell_j-\ell_i+1-\theta)}\prod_{i=1}^{N}w(\ell_i;N)
\end{equation}
on ordered $N$-tuples $\ell_1<\cdots <\ell_N$ such that $\displaystyle{\ell_i}=\lambda_{N-i + 1}+{\theta} \cdot i$ and $\lambda_1\geq {...}\geq{\lambda_N}$ are integers. The quantity $Z_N$ is a normalization constant, which is finite under the assumptions on $w(x;N)$. We denote the state space of the above configurations $(\ell_1,...\ell_N)$ by $\mathbb{W}_N^{\theta}$. 
\end{definition}
\begin{remark}\label{S5rem1}
The probability in (\ref{S5beta}) looks like $\prod_{1\leq i<j\leq N}(\ell_j-\ell_i)^{\beta}\prod_{i=1}^{N}w(\ell_i;N)$ if $\ell_j-\ell_i\to\infty$ for $1\leq i < j \leq N$. The latter describes the general-$\beta$ log-gas probability distribution and one can think of the discrete $\beta$-ensemble as a certain discrete version of it.
\end{remark}   
  
If we set $\ell_i=\lambda_{N-i+1}+\theta \cdot i$ with $ ( \lambda_1 \geq \cdots \geq \lambda_N) = \lambda \in \mathbb{Y}^N$, distributed according to $\mathcal{J}_{1^N; \mathfrak{r}_s}$ from Proposition \ref{propJM}, then we obtain
\begin{equation}\label{S5eq2} 
\displaystyle{\mathbb{P}(\ell_1,...,\ell_N)=\frac{\Gamma(\theta)^Ne^{-s\theta N}(s\theta)^{-\frac{N(N+1)}{2}} }{\prod_{i = 1}^N\Gamma(i\theta)}\prod_{1\leq{i}<j\leq{N}}\frac{\Gamma(\ell_j-\ell_i+\theta)\Gamma(\ell_j-\ell_i+1)}{\Gamma(\ell_j-\ell_i)\Gamma(\ell_j-\ell_i+1-\theta)}\prod_{i=1}^{N}\frac{(s\theta)^{\ell_i}}{\Gamma(\ell_i+1)}}
\end{equation}

The above shows that $\mathcal{J}_{1^N; \mathfrak{r}_s}$ is equivalent with the discrete $\beta$-ensemble with $w(x;N)=\frac{(s\theta)^x}{\Gamma(x+1)}$. The latter implies that we may use the results in \cite{BGG} to derive various asymptotic statements about the measure $\mathcal{J}_{1^N; \mathfrak{r}_s}$. We begin with a law of large numbers for the empirical measures.
\begin{theorem}\label{empThm}
Fix $s \geq 0$ and $t > 0$ and let $\lambda \in \mathbb{Y}^N$ be distributed according to $\mathcal{J}_{1^N; \mathfrak{r}_{tN + s}}$ from Proposition \ref{propJM}.
Suppose $\mu^{s,t}_N= \frac{1}{N}\sum_{i = 1}^N \delta \left( \frac{\lambda_i + \theta \cdot (N-i+1)}{N}\right)$. Then there exists a deterministic measure $\mu^{s,t}$, such that $\mu^{s,t}_N \Rightarrow \mu^{s,t}$ as $N \rightarrow \infty$, in the sense that for any bounded continuous function $f$, we have the following convergence in probability:
$$\lim_{N \rightarrow \infty} \int_{\mathbb{R}}{f(x)d\mu^{s,t}_N(x)}={\int_{\mathbb{R}}{f(x)d\mu^{s,t}}(x)}.$$
\end{theorem}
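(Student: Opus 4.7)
The proof strategy is to recognize $\mathcal{J}_{1^N;\mathfrak{r}_{tN+s}}$ as the discrete $\beta$-ensemble specified by (\ref{S5eq2}) with weight $w(x;N)=\frac{((tN+s)\theta)^x}{\Gamma(x+1)}$, and then to invoke the law of large numbers for empirical measures proved in \cite{BGG} for such ensembles. Concretely, if we set $\ell_i=\lambda_{N-i+1}+\theta i$, then the empirical measure $\mu^{s,t}_N$ of the theorem agrees with $\frac{1}{N}\sum_{i=1}^N\delta(\ell_i/N)$, and (\ref{S5eq2}) expresses the law of $(\ell_1,\ldots,\ell_N)$ in the form of Definition \ref{S5DefBeta}. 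The constraint $\ell_{i+1}-\ell_i\geq\theta$, coming from the $\lambda$-quantization, translates after rescaling by $N$ to the upper bound $1/\theta$ on the density of any weak limit of $\mu^{s,t}_N$.

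The key analytical input is the large-$N$ behavior of $\frac{1}{N}\log w(N\xi;N)$. Using Stirling's expansion $\log\Gamma(N\xi+1)=N\xi\log(N\xi)-N\xi+O(\log N)$, one obtains
$$\frac{1}{N}\log w(N\xi;N)=\xi\log\left(\frac{(tN+s)\theta}{N\xi}\right)+\xi+o(1) \xrightarrow[N\to\infty]{} -V(\xi),\qquad V(\xi)=\xi\log\xi-\xi\log(t\theta)-\xi,$$
with the convergence uniform on compact subsets of $(0,\infty)$. The parameter $s$ contributes only a $O(1/N)$ correction to this limit, so the limiting potential depends only on $t$. The function $V$ is real-analytic on $(0,\infty)$, strictly convex, and grows superlinearly at infinity, so $w$ satisfies the regularity and confinement hypotheses required by the LLN of \cite{BGG}.

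Applying the law of large numbers of \cite{BGG} then yields that $\mu^{s,t}_N$ converges in probability (in the weak topology tested against bounded continuous functions) to the unique minimizer $\mu^{s,t}$ of the constrained logarithmic energy functional
$$\mathcal{E}[\mu]=\int V(\xi)\,d\mu(\xi)-\theta\iint\log|\xi-\eta|\,d\mu(\xi)\,d\mu(\eta)$$
over probability measures $\mu$ on $[0,\infty)$ whose density is at most $1/\theta$. The main obstacle in executing this plan is checking that our weight $w(x;N)$ fits the precise technical hypotheses of \cite{BGG}, which are typically phrased in terms of families $w=\exp(-NV_N(x/N))$ with $V_N$ real-analytic and satisfying uniform regularity, off-critical and one-cut assumptions, and appropriate control at the endpoints. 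Performing these verifications, and then explicitly solving the variational problem to identify $\mu^{s,t}$ (and in particular to establish the two properties listed after Proposition \ref{empProp}), is what I would take up in Section \ref{Section5.2}, reading off the support $[0,b_t]$ and the Stieltjes transform at the right edge from the Euler--Lagrange equation $V'(\xi)=2\theta\int(\xi-\eta)^{-1}d\mu^{s,t}(\eta)$ on the set where the density constraint is inactive.
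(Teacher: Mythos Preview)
Your proposal is correct and follows the same approach as the paper: identify $\mathcal{J}_{1^N;\mathfrak{r}_{tN+s}}$ with the discrete $\beta$-ensemble via (\ref{S5eq2}) and invoke the law of large numbers from \cite{BGG}. The paper's proof of the theorem proper is just this citation, with no more detail than you give.

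The one genuine methodological difference lies in your plan for Section \ref{Section5.2}. You propose to identify $\mu^{s,t}$ by solving the constrained variational problem via its Euler--Lagrange equation $V'(\xi)=2\theta\int(\xi-\eta)^{-1}d\mu^{s,t}(\eta)$. The paper instead uses Nekrasov's equation (Corollary \ref{NekThm}): it shows that a certain expression $R_N(\xi)$ built from expectations of products $\prod_i(1\pm\theta/(\xi-\ell_i))$ is a degree-one polynomial, passes to the $N\to\infty$ limit to obtain the algebraic relation $z+\theta(t-1)=z\,e^{-\theta G_{\mu^{s,t}}(z)}+t\theta\,e^{\theta G_{\mu^{s,t}}(z)}$, solves this quadratic for $e^{\theta G_{\mu^{s,t}}(z)}$, and recovers the density by Stieltjes inversion. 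Both routes lead to the same measure and the same edge behavior; the Nekrasov approach has the advantage of producing the Stieltjes transform in closed form without separately solving a singular integral equation, and it bypasses the need to verify the one-cut and off-critical hypotheses you flag as an obstacle.
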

\begin{proof}
The result follows from the identification of $\mathcal{J}_{1^N; \mathfrak{r}_{tN + s}}$ with the discrete $\beta$-ensemble in (\ref{S5eq2}) and Theorem 1.2 in \cite{BGG}.
The idea is to establish a large deviations principle for the measure in (\ref{S5eq2}), which would show that it is concentrated on those $N$-tuples $(\ell_1,...,\ell_N)$ which maximize the probability density. Similar results are known in various contexts (see e.g. the references in the proof of Proposition 2.2 of \cite{BGG}).\\
\end{proof}

The measure $\mu^{s,t}$ will be explicitly computed in Section \ref{Section5.2}; however, we remark that it only depends on $t$ and so we will refer to it as $\mu^t$. As will be shown, $\mu^t$ is compactly supported on the interval $[0, b_t]$ with $b_t = \theta(1+\sqrt{t})^2$ and has a density there that is bounded by $\theta^{-1}$. An important additional result that we will require for our discrete $\beta$-ensemble is that the rescaled rightmost particle $\frac{\ell_N}{N}$ concentrates near the right endpoint of the support $\theta(1+\sqrt{t})^2$. We summarize the result in a theorem below, which was communicated to us by Vadim Gorin, and whose proof will appear at a later time.
\begin{theorem}\label{thmRight}
Fix $s \geq 0$ and $t > 0$ and let $\lambda \in \mathbb{Y}^N$ be distributed according to $\mathcal{J}_{1^N; \mathfrak{r}_{tN + s}}$ from Proposition \ref{propJM}. Then we have the following $L^1$ convergence result
\begin{equation}\label{S5eqRight}
\lim_{N \rightarrow \infty} \mathbb{E} \left[ \left| \frac{\lambda_1 + \theta \cdot N }{N} - \theta(1+\sqrt{t})^2\right| \right] = 0.
\end{equation}
\end{theorem}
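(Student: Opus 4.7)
The plan is to prove $\ell_N/N \xrightarrow{\mathbb{P}} b_t := \theta(1+\sqrt{t})^2$ and then upgrade to $L^1$ using an exponential upper-tail estimate on $\ell_N/N$. The lower bound is essentially free: since $\mu^t$ is a probability measure supported on $[0,b_t]$ with density bounded by $\theta^{-1}$, we have $\mu^t((b_t-\epsilon, b_t]) > 0$ for any $\epsilon>0$, and applying the weak convergence of Theorem \ref{empThm} to a continuous bump supported just below $b_t$ shows that with probability tending to one at least one $\ell_i/N$ lies in $(b_t-\epsilon, b_t+\epsilon)$; since $\ell_N$ is the largest, $\ell_N/N \geq b_t-\epsilon$ with high probability.

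The main obstacle is the upper bound, since weak convergence by itself cannot rule out an outlier particle far outside the support. The approach is to exploit the explicit density (\ref{S5eq2}) with $s = tN$ and compute the conditional one-step ratio
\[
\frac{\mathbb{P}(\ell_N = m+1 \mid \ell_1, \ldots, \ell_{N-1})}{\mathbb{P}(\ell_N = m \mid \ell_1, \ldots, \ell_{N-1})} = \frac{tN\theta}{m+1} \prod_{j=1}^{N-1} \frac{(m - \ell_j + \theta)(m + 1 - \ell_j)}{(m - \ell_j)(m + 1 - \ell_j - \theta)}.
\]
On the high-probability event that $\mu_N^{s,t}$ is close to $\mu^t$ and $\ell_{N-1}/N \leq b_t + \epsilon/2$, this ratio at $m = Nx$ with $x \geq b_t + \epsilon$ is asymptotically $\frac{t\theta}{x}\exp\bigl(2\theta\,G_{\mu^t}(x)\bigr)$. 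Using the boundary value $G_{\mu^t}(b_t^+) = \theta^{-1}\log(1+t^{-1/2})$ recorded in Section \ref{Section5.2}, a direct algebraic check shows that the product of these two factors equals exactly $1$ at $x = b_t$, and both factors are strictly decreasing in $x$ outside the support. Consequently, for each $\epsilon > 0$ the ratio is bounded by some $1 - c(\epsilon) < 1$ uniformly in $m \geq N(b_t + \epsilon)$ on the event in question, and summing the resulting geometric series yields an exponential tail $\mathbb{P}(\ell_N \geq N(b_t + 2\epsilon)) \leq e^{-c'(\epsilon) N}$ for all large $N$.

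Combining the two bounds gives the convergence in probability, and the exponential upper-tail bound provides the uniform integrability of $(\ell_N/N)_N$ needed to upgrade to $L^1$. The delicate step is making the ratio computation rigorous uniformly in $m \geq N(b_t + \epsilon)$: one must carve out the high-probability event carefully so that the approximation $G_{\mu_N^{s,t}} \approx G_{\mu^t}$ holds simultaneously for all such $m$, handle the particles potentially lying near $m$ (most naturally via a bootstrap that first controls $\ell_{N-1}$), and quantify the finite-$N$ corrections. Such edge-concentration estimates are standard in the $\beta$-ensemble literature and can be extracted, for instance, from the loop-equation machinery of \cite{BGG}, which is precisely the framework in which the equilibrium measure $\mu^t$ arises.
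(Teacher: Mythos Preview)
The paper does not actually contain a proof of Theorem~\ref{thmRight}. The authors state explicitly that the result ``was communicated to us by Vadim Gorin, and whose proof will appear at a later time,'' and the accompanying remark indicates that the general case is to appear in \cite{GGG}. So there is no proof in the paper to compare your proposal against.

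That said, your sketch is a reasonable outline of the standard large-deviations approach to edge concentration for discrete $\beta$-ensembles, and your ratio computation is correct: at $x=b_t$ one indeed has $\tfrac{t\theta}{b_t}\exp(2\theta G_{\mu^t}(b_t)) = \tfrac{t}{(1+\sqrt t)^2}\cdot\tfrac{(1+\sqrt t)^2}{t}=1$, with strict monotonicity beyond the support. You are also candid about the gaps: the bootstrap controlling $\ell_{N-1}/N$ (needed before the conditional ratio argument even applies), and the uniformity of the Stieltjes-transform approximation over all $m\ge N(b_t+\epsilon)$. These are exactly the delicate points, and ``standard in the $\beta$-ensemble literature'' is doing real work in your last paragraph --- the paper itself defers precisely this argument to \cite{GGG} rather than carrying it out. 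One small additional point: your proposal treats $s=tN$ but the statement has $s$ replaced by $tN+s$ with $s\ge 0$ fixed; this is harmless since the extra $s$ washes out in the $N\to\infty$ limit, but should be noted. In short, your plan is credible but is a proof \emph{sketch}, not a proof, and the paper offers nothing more detailed to compare it with.
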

\begin{remark}
Theorem \ref{thmRight} was proved in the case $\theta = 1$  in \cite{Joh00}, and analogues for continuous log-gases are well-known (see e.g. Section 2.6.2 of \cite{AGZ}). For the general discrete $\beta$-ensemble there is a proof of Theorem \ref{thmRight} under stronger assumptions on the measure in \cite{BGG}, and a more general version which will contain the above theorem as a special case will appear in \cite{GGG}.
\end{remark}

\subsection{Nekrasov's equation}\label{Section5.2}
In this section we use Nekrasov's equation to find the limiting equilibrium measure $\mu^{s,t}$ of Theorem \ref{empThm}. This approach was followed in \cite{BGG}. In the end we will prove Proposition \ref{empProp} and the two properties after it.

The following corollary contains the Nekrasov's equation and can be proved in the same way as Theorem 4.1 in \cite{BGG}. We remark that while we do not have the compactness assumption from that theorem, the same proof goes through, because the set ${\mathbb{W}_N^{\theta}}$ is discrete in $\mathbb{R}$.
\begin{corollary}\label{NekThm}
Let $\mathbb{P}_N$ be the distribution on $N$-tuples $(\ell_1,...,\ell_N)\in{\mathbb{W}_N^{\theta}}$ as in (\ref{S5eq2}). Define 
$$R_N(\xi)=\xi\cdot \mathbb{E}_{\mathbb{P}_N}\left[\prod_{i=1}^N\left(1-\frac{\theta}{\xi-\ell_i}\right)\right]+ (tN + s)\theta  \cdot \mathbb{E}_{\mathbb{P}_N}\left[\prod_{i=1}^{N}\left(1+\frac{\theta}{\xi-\ell_i-1} \right)\right].$$
Then $R_N(\xi)$ is a degree one polynomial.
\end{corollary}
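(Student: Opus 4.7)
The plan is to verify two things: that $R_N(\xi)$ is a polynomial (i.e., the rational function has no poles), and that its degree equals one. The approach follows Theorem 4.1 of \cite{BGG} closely, with the only new ingredient being a modification to handle the fact that the weight $w(x;N) = ((tN+s)\theta)^x/\Gamma(x+1)$ associated to $\mathbb{P}_N$ via (\ref{S5eq2}) has non-compact support in $\mathbb{Z}_{\geq 0}$.

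First I would expand
\[
R_N(\xi) = \sum_{\ell \in \mathbb{W}_N^{\theta}} \mathbb{P}_N(\ell) \left[ \xi \prod_{i=1}^N \frac{\xi - \ell_i - \theta}{\xi - \ell_i} + (tN+s)\theta \prod_{i=1}^N \frac{\xi - \ell_i - 1 + \theta}{\xi - \ell_i - 1} \right]
\]
and note that the only possible poles of $R_N(\xi)$ are at values $\xi = \alpha$ of the form $\alpha = m + j\theta$, arising from the factor $1/(\xi - \ell_i)$ in the first product when $\ell_i = \alpha$, or from $1/(\xi - \ell_i - 1)$ in the second product when $\ell_i = \alpha - 1$. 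I would split the configurations contributing to the residue at $\xi = \alpha$ into a \emph{generic} class (where exactly one of $\alpha$, $\alpha - 1$ appears in $\ell$) and a \emph{degenerate} class (where both appear).

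For the generic class I would pair each $\ell$ containing $\alpha$ (but not $\alpha - 1$) with the configuration $\ell'$ obtained by replacing the entry $\ell_{i_0} = \alpha$ by $\alpha - 1$. A direct computation from (\ref{S5eq2}), combined with the weight ratio $w(\alpha-1;N)/w(\alpha;N) = \alpha/((tN+s)\theta)$, gives
\[
\frac{\mathbb{P}_N(\ell')}{\mathbb{P}_N(\ell)} = \frac{\alpha}{(tN+s)\theta} \prod_{j \neq i_0} \frac{(\alpha - \ell_j - \theta)(\alpha - \ell_j - 1)}{(\alpha - \ell_j)(\alpha - \ell_j - 1 + \theta)},
\]
which is exactly the ratio needed for the residue contributions from the first product at $\ell$ and the second product at $\ell'$ to cancel. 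The degenerate class requires a separate argument. Using the identity $\ell_{k+1} - \ell_k = (\lambda_{N-k} - \lambda_{N-k+1}) + \theta \geq \theta$, any configuration $\ell$ satisfying $\ell_i = \alpha - 1$ and $\ell_{i+k} = \alpha$ must satisfy $k\theta = 1$ together with $\lambda_{N-i} = \cdots = \lambda_{N-i-k+1}$, pinning the intermediate particles at $\ell_{i+m} = \alpha - 1 + m\theta$ for $0 \leq m \leq k$. In particular $\ell_{i+1} = \alpha - 1 + \theta$ and $\ell_{j-1} = \alpha - \theta$ (with $j = i+k$), which respectively produce the vanishing factor $(\alpha - \ell_{i+1} - 1 + \theta) = 0$ in the residue of the second product and $(\alpha - \ell_{j-1} - \theta) = 0$ in the residue of the first; when $k = 1$ these two prescriptions merge into a single $(1-\theta)=0$ factor. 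Hence degenerate configurations contribute no residue and $R_N(\xi)$ is indeed a polynomial.

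For the degree, I would expand each product as $\xi \to \infty$ via $\prod_i(1 \mp \theta/(\xi - \ell_i - \varepsilon_\pm)) = 1 \mp N\theta/\xi + O(\xi^{-2})$, obtaining
\[
R_N(\xi) = \xi - N\theta + (tN+s)\theta + O(\xi^{-1}) = \xi + \theta((t-1)N + s) + O(\xi^{-1}).
\]
Since $R_N$ is a polynomial by the previous step and has this asymptotic, it must equal $\xi + \theta((t-1)N+s)$, and in particular is of degree one. I expect the main obstacle to be the degenerate case in the pole cancellation: the compact support assumption in \cite{BGG} rules out such configurations outright, and the argument above -- exploiting the rigid arithmetic structure $\ell_i = \lambda_{N-i+1} + i\theta$ of $\mathbb{W}_N^\theta$ to manufacture vanishing factors from intermediate particles -- is the most delicate point of the proof.
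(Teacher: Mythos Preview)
Your argument is correct and follows the same residue-cancellation route as the paper, which simply defers to Theorem~4.1 of \cite{BGG}; you have supplied the details that the paper omits. The generic pairing, the degenerate analysis via the forced intermediate particles $\ell_{i+1}=\alpha-1+\theta$ and $\ell_{j-1}=\alpha-\theta$, and the degree computation are all fine.

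One correction to your closing commentary: the compact-support hypothesis in \cite{BGG} does \emph{not} rule out degenerate configurations. Configurations with both $\alpha$ and $\alpha-1$ among the $\ell_i$ occur in the compact setting as well (whenever $1/\theta\in\mathbb{Z}_{\geq 1}$), and \cite{BGG} disposes of them by exactly the vanishing-factor mechanism you wrote down. What compactness actually buys in \cite{BGG} is that the expectation defining $R_N$ is a \emph{finite} sum of rational functions, so $R_N$ is a priori rational and the residue argument is purely algebraic. In the present non-compact setting the sum is infinite, and one must know that the possible pole locations $\bigcup_{i=1}^N(\theta i+\mathbb{Z})$ form a discrete set so that the series converges to a meromorphic function and the term-by-term residue cancellation is legitimate. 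This is the content of the paper's remark that ``the set $\mathbb{W}_N^\theta$ is discrete in $\mathbb{R}$,'' and it is the only point where the non-compactness needs attention. Your proof would be complete after adding one sentence to this effect; the degenerate case is not where the new work lies.
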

For a probability measure $\nu$ on $\mathbb{R}$ we define the Stieltjes transform 
$$G_\nu(z)=\int_{\mathbb{R}} \frac{1}{z-x}d\nu(x)$$ 
for $z \not \in $ supp$(\nu)$. Note that $G_\nu(z)$ is analytic on the upper and lower complex half-planes. 

We go back to the setup of Theorem \ref{empThm} and assume that $\lambda \in \mathbb{Y}^N$ is distributed according to $\mathcal{J}_{1^N; \mathfrak{r}_{tN + s}}$. Setting  $\ell_i=\lambda_{N-i+1}+\theta \cdot i$, using Corollary \ref{NekThm} and then setting $\xi = Nz$ we conclude that
\begin{equation}\label{S5An1}
R_N(Nz) = Nz\cdot \mathbb{E}_{\mathbb{P}_N}\left[\prod_{i=1}^N\left(1-\frac{\theta}{Nz-\ell_i}\right)\right]+(tN + s)\cdot \mathbb{E}_{\mathbb{P}_N}\left[\prod_{i=1}^{N}\left(1+\frac{\theta}{Nz-\ell_i-1} \right)\right]
\end{equation}
is a degree $1$ polynomial of $z$. Using the approximation $1+x\approx{e^x}$ for small $x$ we see that for $z \in \mathbb{C} \backslash \mathbb{R}$
$$\prod_{i=1}^N\left(1-\frac{\theta}{Nz-\ell_i}\right) = \exp\left(\frac{\theta}{N} \sum_{i = 1}^N \frac{1}{z-\ell_i/N} + O(N^{-1})\right).$$
From Theorem \ref{empThm}, we know that 
$$\frac{1}{N} \sum_{i = 1}^N \frac{1}{z-\ell_i/N} + O(N^{-1}) \xrightarrow{\mathbb{P}} \int_{\mathbb{R}} \frac{1}{z-x}d\mu^t(x) = G_{\mu^{s,t}}(z) , \mbox{ as $N\rightarrow \infty$} $$
where $\mu^{s,t}$ is the limiting measure afforded by the theorem. An application of the Bounded Convergence Theorem shows that
\begin{equation}\label{S5An2}
\lim_{N \rightarrow \infty} \mathbb{E}_{\mathbb{P}_N}\left[\prod_{i=1}^N\left(1-\frac{\theta}{Nz-\ell_i}\right)\right] = \exp(\theta G_{\mu^{s,t}}(z)).
\end{equation}
Similar arguments reveal that
\begin{equation}\label{S5An3}
\lim_{N \rightarrow \infty} \mathbb{E}_{\mathbb{P}_N}\left[\prod_{i=1}^{N}\left(1+\frac{\theta}{Nz-\ell_i-1} \right)\right] = \exp(-\theta G_{\mu^{s,t}}(z)).
\end{equation}
Dividing both sides of (\ref{S5An1}) by $N$ and letting $N$ tend to infinity we conclude that for each $z \in \mathbb{C} \backslash \mathbb{R}$, we have
$$\lim_{N \rightarrow \infty} \frac{R_N(Nz)}{N} = z \exp(\theta G_{\mu^{s,t}}(z)) + t\exp(-\theta G_{\mu^{s,t}}(z)).$$

Since $R_N$ is a degree $1$ polynomial, we know that $N^{-1}R_N(Nz) = a_Nz + b_N$ for some sequences $a_N, b_N \in \mathbb{C}$. The above equation suggests that $a_N \rightarrow a$ and $b_N \rightarrow b$ as $N \rightarrow \infty$ for some $a,b \in \mathbb{C}$. In addition, using that $G_{\mu^{s,t}}(\iota x) \sim \frac{1}{\iota x}$ as $x \rightarrow \infty$, we conclude that $a = 1$ and $b = \theta(t-1)$. The latter means that we have the following functional equation for the Stieltjes transform of the limiting measure $\mu^{s,t}$
\begin{equation}\label{S5An4}
z + \theta(t-1) =  z \exp(\theta G_{\mu^{s,t}}(z)) + t\exp(-\theta G_{\mu^{s,t}}(z)).
\end{equation}

We observe that (\ref{S5An4}) is a quadratic equation in $\exp(\theta G_{\mu^{s,t}}(z))$ and we can solve it to get
\begin{equation}\label{S5An5}
\exp(\theta G_{\mu^{s,t}}(z))=\frac{z+\theta(t-1)-\sqrt{(z+\theta(t-1))^2-4t\theta z}}{2t\theta}
\end{equation}
We take logarithms above and invoke the Stieltjes transform inversion formula (see e.g. Theorem 2.4.3 in \cite{AGZ})
$$\displaystyle{f(x)=\lim_{y\to0^+}\frac{ImG_{\mu^{s,t}}(x-iy)-ImG_{\mu^{s,t}}(x+iy)}{2\pi i}}$$
to derive a formula for the density of the limitng measure. The result is presented below and we split it into the cases $t \geq 1$ and $t \in (0,1)$.

Suppose $t\geq 1$. Then we get 
\begin{equation}\label{S5Den1}
f(x) = \begin{cases} 0 &\mbox{  for $x<\theta(\sqrt{t}-1)^2$ or $x>\theta(\sqrt{t}+1)^2$,} \\ 
(\theta \pi)^{-1}\arccot\left(\frac{x+\theta(t-1)}{\sqrt{4{\theta}tx-[x+\theta(t-1)]^2}}\right) &\mbox{ otherwise.}
\end{cases}
\end{equation}
Suppose $t \in (0,1)$. Then we get 
\begin{equation}\label{S5Den2}
f(x) = \begin{cases} 0 &\mbox{ for $x>\theta(\sqrt{t}+1)^2$}, \\ \theta^{-1} &\mbox{ for $x<\theta(\sqrt{t}-1)^2$},\\
(\theta \pi)^{-1}\arccot\left(\frac{x+\theta(t-1)}{\sqrt{4{\theta}tx-[x+\theta(t-1)]^2}}\right) &\mbox{ otherwise.}
\end{cases}
\end{equation}

We end the section with a proof of  Proposition \ref{conProp} and the two properties after it.
\begin{proof}(Proposition \ref{empProp})
By Proposition \ref{S2PC} and Lemma \ref{S2LMJ} we know that the distribution of $X^N(tN+s;N)$ is the same as $\mathcal{J}_{1^N; \mathfrak{r}_{tN + s}}$. Consequently, the convergence statement of the proposition follows from Theorem \ref{empThm}. The fact that the limit depends only on $t$ and not $s$ is a consequence of (\ref{S5Den1}) and (\ref{S5Den2}), which also imply the first property after Proposition \ref{empProp}.

From (\ref{S5Den1}) and (\ref{S5Den2}) we see that the density $f$ behaves like ${\bf 1}_{\{x < b_t\}}\cdot \sqrt{b_t - x}$ near $b_t = \theta(\sqrt{t}+1)^2$. Thus the second property after Proposition   \ref{empProp} is a consequence of the Dominated Convergence Theorem. 
\end{proof}

\subsection{Proof of Proposition \ref{conProp} and Lemma \ref{S4Lemma1}}\label{Section5.3}
We begin with a useful lemma.
\begin{lemma}\label{S5Lemma}Let $\mathbb{P}^s_N$ be the distribution on $N$-tuples $(\ell_1,...,\ell_N)\in{\mathbb{W}_N^{\theta}}$ as in (\ref{S5eq2}) with $s > 0$. Define
$$\Delta(x)=\prod_{j=1}^{N-1}\frac{\Gamma(x-\ell_j+1)\Gamma(x-\ell_j+\theta)}{\Gamma(x-\ell_j+1-\theta)\Gamma(x-\ell_j)}.$$ 
Then we have
\begin{equation}\label{S5M1}
\mathbb{E}_{\mathbb{P}^s_N}\left[\frac{\Delta(\ell_{N}-1)}{\Delta(\ell_N)}\right] \hspace{-1mm}= \mathbb{E}_{\mathbb{P}^s_N}\left[\frac{s\theta}{\ell_N+1}\right] \mbox{ and } \mathbb{E}_{\mathbb{P}^s_N}\left [\frac{\Delta(\ell_N+1)}{\Delta(\ell_N)}\right]  \hspace{-1mm} = \mathbb{E}_{\mathbb{P}^s_N}\left[\frac{\ell_N}{s\theta} \times {\bf 1}_{\{\ell_N > \ell_{N-1}+\theta\}} \right].
\end{equation}
\end{lemma}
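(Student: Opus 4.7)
The plan is to prove both identities by a direct one-step shift in the $\ell_N$ coordinate of the ensemble. From the explicit density (\ref{S5eq2}), all of the dependence on $\ell_N$ lives in the product $\prod_{j<N}\frac{\Gamma(\ell_N-\ell_j+\theta)\Gamma(\ell_N-\ell_j+1)}{\Gamma(\ell_N-\ell_j)\Gamma(\ell_N-\ell_j+1-\theta)}=\Delta(\ell_N)$ together with the single-particle factor $(s\theta)^{\ell_N}/\Gamma(\ell_N+1)$. Writing $W(\ell_1,\ldots,\ell_N)$ for the unnormalized weight of the measure, this factorization yields the ratio formulas
\[
\frac{W(\ell_1,\ldots,\ell_{N-1},\ell_N-1)}{W(\ell_1,\ldots,\ell_{N-1},\ell_N)}=\frac{\Delta(\ell_N-1)}{\Delta(\ell_N)}\cdot\frac{\ell_N}{s\theta},\qquad \frac{W(\ell_1,\ldots,\ell_{N-1},\ell_N+1)}{W(\ell_1,\ldots,\ell_{N-1},\ell_N)}=\frac{\Delta(\ell_N+1)}{\Delta(\ell_N)}\cdot\frac{s\theta}{\ell_N+1},
\]
so the ratios appearing in the statement are weight ratios in disguise.

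For the first identity I would multiply through to get
\[
Z\cdot\mathbb{E}_{\mathbb{P}^s_N}\!\left[\frac{\Delta(\ell_N-1)}{\Delta(\ell_N)}\right]=\sum_{\ell\in\mathbb{W}_N^{\theta}}W(\ell_1,\ldots,\ell_{N-1},\ell_N-1)\cdot\frac{s\theta}{\ell_N},
\]
and then substitute $\ell_N'=\ell_N-1$ in the sum. The admissibility constraint $\ell_N\geq\ell_{N-1}+\theta$ relaxes to $\ell_N'\geq\ell_{N-1}+\theta-1$, overshooting the state space by the single value $\ell_N'=\ell_{N-1}+\theta-1$. The key observation is that at this spurious point the weight $W(\ell_1,\ldots,\ell_{N-1},\ell_N')$ already vanishes: its $(i,j)=(N-1,N)$ factor contains $\Gamma(\ell_N'-\ell_{N-1}+1-\theta)=\Gamma(0)=\infty$ in the denominator, which kills the whole product regardless of the other terms. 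Consequently the shifted sum equals $\sum_{\ell'\in\mathbb{W}_N^{\theta}}W(\ell')\cdot s\theta/(\ell_N'+1)$, and relabelling produces $\mathbb{E}_{\mathbb{P}^s_N}[s\theta/(\ell_N+1)]$ as desired.

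For the second identity the analogous manipulation with the shift $\ell_N'=\ell_N+1$ produces a sum over $\ell_N'\geq\ell_{N-1}+\theta+1$, which, as an event under $\mathbb{P}^s_N$, is precisely $\{\ell_N>\ell_{N-1}+\theta\}$; this is exactly the indicator on the right-hand side. The factor $\ell_N'/(s\theta)$ comes from inverting the ratio formula above. No boundary issue arises on the upper end since the factorial $1/\Gamma(\ell_N+1)$ gives absolute convergence of all sums involved. The only real subtlety in the whole argument is the boundary bookkeeping for the shift-down identity, which is resolved by the $\Gamma(0)$ vanishing just described; the rest is routine algebraic bookkeeping.
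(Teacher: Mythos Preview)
Your proposal is correct and follows essentially the same approach as the paper: both arguments shift $\ell_N$ by $\pm 1$ in the density and track the boundary term, using that the interaction factor vanishes at $\ell_N=\ell_{N-1}+\theta-1$ via the $\Gamma(0)$ pole. The only cosmetic difference is that the paper first conditions on $(\ell_1,\ldots,\ell_{N-1})$ and performs the shift in the one-variable sum, whereas you work directly with the unnormalized weight $W$ over the full state space; the algebra and the boundary bookkeeping are identical.
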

\begin{proof}
Using the functional equation $\Gamma(x+1)=x\Gamma(x)$, we see that
$$\mathbb{E}_{\mathbb{P}^s_N}\left[\frac{\ell_N}{s\theta}{\Bigg |}\ell_1,...,\ell_{N-1}\right] = C(\ell_1,..,\ell_{N-1})\sum_{l_N\in{\theta+\ell_{N-1}+ \mathbb{Z}_{\geq 0}}}\frac{\ell_N}{s\theta}\frac{(s\theta)^{\ell_N}}{\Gamma(\ell_N+1)}\Delta(\ell_N)= $$
$$ =C(\ell_1,..,\ell_{N-1})\left[ \frac{(s\theta)^{\ell_{N-1} + \theta-1}}{\Gamma(\ell_{N-1} + \theta)}\Delta(\ell_{N-1} + \theta) + \sum_{\ell_N\in{\theta+\ell_{N-1}+\mathbb{Z}_{\geq 0}}}\frac{(s\theta)^{\ell_N}}{\Gamma(\ell_N+1)}\Delta(\ell_N)\frac{\Delta(\ell_N+1)}{\Delta(\ell_N)}   \right]=$$
$$ =  \mathbb{E}_{\mathbb{P}^s_N}\left [\frac{\ell_N}{s\theta} \times {\bf 1}_{\{\ell_N = \ell_{N-1}+\theta\}} {\Bigg |}\ell_1,...,\ell_{N-1} \right ] + \mathbb{E}_{\mathbb{P}^s_N}\left [\frac{\Delta(\ell_N+1)}{\Delta(\ell_N)}{\Bigg |}\ell_1,...,\ell_{N-1} \right ],$$
where $C(\ell_1,..,\ell_{N-1})$ is some normalization constant. Rearranging terms and taking the expectation on both sides above we conclude the second part of the lemma.

The first part is proved similarly.
$$\mathbb{E}_{\mathbb{P}^s_N}\left[\frac{s\theta}{\ell_N+1}{\Bigg |}\ell_1,...,\ell_{N-1}\right] = C(\ell_1,..,\ell_{N-1})\sum_{\ell_N\in{\theta+\ell_{N-1}+ \mathbb{Z}_{\geq 0}}}\frac{s\theta}{\ell_N+1}\frac{(s\theta)^{\ell_N}}{\Gamma(\ell_N+1)}\Delta(\ell_N)= $$
$$ =C(\ell_1,..,\ell_{N-1}) \sum_{l_N\in{\theta+\ell_{N-1}+\mathbb{Z}_{\geq 0}}}\frac{(s\theta)^{\ell_N}}{\Gamma(\ell_N+1)}\Delta(\ell_N)\frac{\Delta(\ell_N-1)}{\Delta(\ell_N)} = \mathbb{E}_{\mathbb{P}^s_N}\left [\frac{\Delta(\ell_N-1)}{\Delta(\ell_N)}{\Bigg |}\ell_1,...,\ell_{N-1}\right ],$$
where $C(\ell_1,..,\ell_{N-1})$ is some normalization constant. In the above we used that $\Delta(\ell_{N-1} + \theta - 1) = 0$. Taking expectations on both sides of the above equation proves the first statement in the lemma.
\end{proof}

\begin{proof}(Proposition \ref{conProp})
From Proposition \ref{S2PC} and Lemma \ref{S2LMJ} we know that the distribution of $X^N(s;N)$ is the same as $\mathcal{J}_{1^N; \mathfrak{r}_{s}}$. The latter together with Theorem \ref{thmRight} proves that
$$\frac{X_1^N(tN+s;N) + \theta \cdot N}{N}  \xrightarrow{L^1} \theta(1 + \sqrt{t})^2\mbox{ as $N \rightarrow \infty$},$$
which is equivalent to (\ref{S3Right}).

For $\lambda$ distributed according to $\mathcal{J}_{1^N; \mathfrak{r}_{s}}$, we set $\ell_i=\lambda_{N-i+1}+\theta \cdot i$. Recall that $(\ell_1,...,\ell_N)$ has the same distribution as (\ref{S5eq2}). Let $\Delta(x)$ be as in Lemma \ref{S5Lemma} and notice that from the functional equation of the gamma function $\Gamma(x+1) = x\Gamma(x)$, we have
$$\frac{\Delta(\ell_{N}-1)}{\Delta(\ell_N)} = \prod_{j = 1}^{N-1} \left( 1 - \frac{1}{\ell_N - \ell_j} \right) \left( 1 - \frac{2\theta - 1}{\ell_N - \ell_j + \theta - 1}\right).$$
Combining the above with the first equation in (\ref{S5M1}), where we replace $s$ with $tN + s$, we conclude
\begin{equation}\label{S5M3}
\mathbb{E}_{\mathbb{P}^{tN + s}_N}\left[\prod_{j = 1}^{N-1} \left( 1 - \frac{1}{\ell_N - \ell_j} \right) \left( 1 - \frac{2\theta - 1}{\ell_N - \ell_j + \theta - 1}\right)\right]  = \mathbb{E}_{\mathbb{P}^{tN + s}_N}\left[\frac{(tN + s)\theta}{\ell_N + 1}\right].
\end{equation}
Observe that $\frac{\ell_N + 1}{(tN + s)\theta} \xrightarrow{L^1} t^{-1} (1 + \sqrt{t})^2$ from Theorem \ref{thmRight}, and so $\frac{(tN + s)\theta}{\ell_N + 1} \xrightarrow{\mathbb{P}} t (1 + \sqrt{t})^{-2}$. In addition, we know that $\ell_N \geq N \theta$ by definition and so the Bounded Convergence Theorem shows
\begin{equation}\label{S5M4}
\lim_{N \rightarrow \infty} \mathbb{E}_{\mathbb{P}^{tN + s}_N}\left[\frac{(tN + s)\theta}{\ell_N + 1}\right] = \frac{t}{(1 + \sqrt{t})^2}.
\end{equation}
Equations (\ref{S5M3}) and (\ref{S5M4}) prove (\ref{ObsF}) and hence Proposition \ref{conProp}.
\end{proof}
\begin{proof}(Lemma \ref{S4Lemma1})
From Proposition \ref{S2PC} and Lemma \ref{S2LMJ} we know that the distribution of $X^N(s;N)$ is the same as $\mathcal{J}_{1^N; \mathfrak{r}_{s}}$. For $\lambda$ distributed according to $\mathcal{J}_{1^N; \mathfrak{r}_{s}}$, we set $\ell_i=\lambda_{N-i+1}+\theta \cdot i$. Recall that $(\ell_1,...,\ell_N)$ has the same distribution as (\ref{S5eq2}). Let $\Delta(x)$ be as in Lemma \ref{S5Lemma} and notice that from the functional equation of the gamma function $\Gamma(x+1) = x\Gamma(x)$, we have
$$\frac{\Delta(\ell_{N}+1)}{\Delta(\ell_N)} = \prod_{j = 1}^{N-1} \left( 1 + \frac{1}{\ell_N - \ell_j} \right) \left( 1 + \frac{2\theta - 1}{\ell_N - \ell_j + \theta - 1}\right).$$
Combining the above with the second equation in (\ref{S5M1}) we conclude
\begin{equation}\label{S5M5}
\mathbb{E}_{\mathbb{P}^{ s}_N}\left[\prod_{j = 1}^{N-1} \left( 1 + \frac{1}{\ell_N - \ell_j} \right) \left( 1 + \frac{2\theta - 1}{\ell_N - \ell_j + \theta - 1}\right)\right]  = \mathbb{E}_{\mathbb{P}^{s}_N}\left[\frac{\ell_N }{s\theta} \times {\bf 1}_{\{\ell_N > \ell_{N-1}+\theta\}} \right] .
\end{equation}
Using that $\ell_N - \ell_i \geq \theta \geq 1$, we see that
\begin{equation}\label{S5M6}
\prod_{j = 1}^{N-1} \left( 1 + \frac{\theta}{\ell_N - \ell_j} \right) \leq \prod_{j = 1}^{N-1} \left( 1 + \frac{1}{\ell_N - \ell_j} \right) \left( 1 + \frac{2\theta - 1}{\ell_N - \ell_j + \theta - 1}\right).
\end{equation}
Combining (\ref{S5M5}), (\ref{S5M6}) with the inequality $\mathbb{E}_{\mathbb{P}^{s}_N}\left[\frac{\ell_N }{s\theta} \times {\bf 1}_{\{\ell_N > \ell_{N-1}+\theta\}} \right] \leq \mathbb{E}_{\mathbb{P}^{s}_N}\left[\frac{\ell_N }{s\theta} \right]$, we conclude (\ref{S4ObsF}).\\

In what follows we will prove (\ref{S4ObsF2}). For $\lambda$ distributed according to $\mathcal{J}_{1^N; \mathfrak{r}_{tN + s}}$, we set $\ell_i=\lambda_{N-i+1}+\theta \cdot i$. Since we already proved Propositions \ref{empProp} and \ref{conProp} we may use the results from Lemma \ref{LSep}. They imply that 
\begin{equation}\label{S5M7}
\prod_{j = 1}^{N-1} \left( 1 + \frac{1}{\ell_N - \ell_j} \right) \left( 1 + \frac{2\theta - 1}{\ell_N - \ell_j + \theta - 1}\right) \xrightarrow{\mathbb{P}} \frac{(\sqrt{t} + 1)^2}{t}\mbox{ as $N \rightarrow \infty$}.
\end{equation}
In addition, by  Lemma \ref{LSep} we know that ${\bf 1}_{\{\ell_N > \ell_{N-1}+\theta\}} \xrightarrow{\mathbb{P}} 1\mbox{ as $N \rightarrow \infty$}$ and so Theorem \ref{thmRight} together with the Generalized Dominated Convergence Theorem implies that
\begin{equation}\label{S5M8}
\lim_{N \rightarrow \infty} \mathbb{E}_{\mathbb{P}^{tN + s}_N}\left[\frac{\ell_N }{s\theta} \times {\bf 1}_{\{\ell_N > \ell_{N-1}+\theta\}} \right] =  \frac{(\sqrt{t} + 1)^2}{t}.
\end{equation}
Combining (\ref{S5M5}), (\ref{S5M7}) and (\ref{S5M8}) we conclude that
\begin{equation}\label{S5M9}
\prod_{j = 1}^{N-1} \left( 1 + \frac{1}{\ell_N - \ell_j} \right) \left( 1 + \frac{2\theta - 1}{\ell_N - \ell_j + \theta - 1}\right) \xrightarrow{L^1} \frac{(\sqrt{t} + 1)^2}{t} \mbox{ as $N \rightarrow \infty$}.
\end{equation}

From Lemma \ref{LSep} we know that $\prod_{j = 1}^{N-1} \left( 1 +  \frac{\theta}{\ell_N - \ell_j} \right) \xrightarrow{\mathbb{P}} \frac{\sqrt{t} + 1}{\sqrt{t}}\mbox{ as $N \rightarrow \infty$}$. The latter, together with (\ref{S5M6}), (\ref{S5M9}) and the Generalized Dominated Convergence Theorem implies that
\begin{equation}\label{S5M10}
\prod_{j = 1}^{N-1} \left( 1 +  \frac{\theta}{\ell_N - \ell_j} \right)\xrightarrow{L^1} \frac{\sqrt{t} + 1}{\sqrt{t}} \mbox{ as $N \rightarrow \infty$}.
\end{equation}
Equation (\ref{S5M10}) implies (\ref{S4ObsF2}).
\end{proof}

\bibliographystyle{amsplain}
\bibliography{PD}

\end{document}